\title{Notes on Beilinson's ``How to glue perverse sheaves''}
\author{Ryan Reich}
\email{ryanr@math.harvard.edu}
\newcommand{\perv}{{}^p}
\newcommand{\Psiun}{\Psi^\text{un}}
\newcommand{\Phiun}{\Phi^\text{un}}
\newcommand{\Xiun}{\Xi^\text{un}}
\newcommand{\psiun}{\psi^\text{un}}
\newcommand{\phiun}{\phi^\text{un}}
\begin{document}

\begin{abstract}
 The titular, foundational work of Beilinson not only gives a technique for gluing perverse sheaves
 but also implicitly contains constructions of the nearby and vanishing cycles functors of perverse
 sheaves. These constructions are completely elementary and show that these functors preserve
 perversity and respect Verdier duality on perverse sheaves.  The work also defines a new, ``maximal
 extension'' functor, which is left mysterious aside from its role in the gluing theorem.  In these
 notes, we present the complete details of all of these constructions and theorems.
\end{abstract}

\pdfbookmark[1]{Title}{b:title}
\maketitle

In this paper we discuss Alexander Beilinson's ``How to glue perverse sheaves'' \cite{this} with
three goals.  The first arose from a suggestion of Dennis Gaitsgory that the author study the
construction of the unipotent nearby cycles functor $R\psiun$ which, as Beilinson observes in his
concluding remarks, is implicit in the proof of his Key Lemma 2.1.  Here, we make this construction
explicit, since it is invaluable in many contexts not necessarily involving gluing.  The second goal
is to restructure the presentation around this new perspective; in particular, we have chosen to
eliminate the two-sided limit formalism in favor of the straightforward setup indicated briefly in
\cite{BB}*{\S4.2} for D-modules.  We also emphasize this construction as a simple demonstration that
$R\psiun[-1]$ and Verdier duality $\DD$ commute, and de-emphasize its role in the gluing theorem.
Finally, we provide complete proofs; with the exception of the Key Lemma, \cite{this} provides a
complete program of proof which is not carried out in detail, making a technical understanding of
its contents more difficult given the density of ideas.  This paper originated as a learning
exercise for the author, so we hope that in its final form it will be helpful as a learning aid for
others.  We do not intend it to supplant, but merely to supplement, the original, and we are
grateful to Beilinson for his generosity in permitting this.

The author would like to offer three additional thanks: to Gaitsgory, who explained how this
beautiful construction can be understood concretely, thus providing the basis for the perspective
taken here; to Sophie Morel, for confirming the author's understanding of nearby and vanishing
cycles as presented below; and to Mark de Cataldo, for his generous contribution of time and effort
to the improvement of these notes.

In order to maintain readability, we will work with sheaves of vector spaces in the classical
topology on complex algebraic varieties, except in the second part of \ref{sec:comments}, where we
will require the field of coefficients to be algebraically closed. For the necessary modifications
to \'etale sheaves, one should consult Beilinson's paper: aside from the shift in definitions the
only change is some Tate twists.  For the D-modules case, one should read Sam Lichtenstein's
undergraduate thesis, \cite{sam}, in which the two-sided limit construction is also given in detail.

\section{Theoretical preliminaries}
The topic at hand is perverse sheaves and nearby cycles; for greater accessibility of these notes,
we give a summary of the definitions and necessary properties here.

\subsection*{Diagram chases}
Occasionally, we indicate diagram chases in a proof.  For ease of reading we have tried not to make
this an essential point, but in case the reader should find such a chase to be a convincing informal
argument, we indicate here why it is also a convincing formal one.

Every object in an abelian category $\cat{A}$ can be considered, via Yoneda's lemma, to be a
sheaf, namely its functor of points, on the \emph{canonical topology} of $\cat{A}$.  This is, by
definition, the largest Grothendieck topology in which all representable functors
$\on{Hom}_{\cat{A}}(\farg, x)$ are sheaves, and its open covers are precisely the \emph{universal
strict epimorphisms}.  Such a map is, in a more general category, a map $\map{f}{u}{x}$ such that
the fibered product $u' = u \times_x u$ exists, the coequalizer $x' = \on{coker}(u'
\rightrightarrows u)$ exists, the natural map $x' \to x$ is an isomorphism, and that all of this is
\emph{also} true when we make any base change along a map $\map{g}{y}{x}$, for the induced map
$\map{f \times_x \id}{u \times_x y}{y}$.  In an abelian category, however, this is all equivalent
merely to the statement that $f$ is a surjection.

Recall the definitions of the various constructions on sheaves:
\begin{enumerate}
 \item Kernels of maps are taken sectionwise; i.e.\ for a map $\map{f}{\sh{F}}{\sh{G}}$,
 $\on{ker}(f)(U) = \on{ker}(\map{f(U)}{\sh{F}(U)}{\sh{G}(U)})$.  Likewise, products and limits are
 taken sectionwise.

 \item Cokernels are \emph{locally} taken sectionwise: any section $s \in \on{coker}(f)(U)$ is, on
 some open cover $V$ of $U$, of the form $\bar{t}$ for $t \in \sh{G}(V)$.  Likewise, images,
 coproducts, and colimits are taken locally.
\end{enumerate}
In an abelian category, where all of these constructions exist by assumption, these descriptions are
even prescriptive: if one forms the sheaves thus described, they are representable by the objects
claimed.  Therefore, the following common arguments in diagram chasing are valid:
\begin{enumerate}
 \item A map $\map{f}{x}{y}$ is surjective if and only if for every $s \in y$, there is some $t \in
 x$ such that $s = f(t)$.  This is code for: for every ``open set'' $U$ and every $s \in y(U)$,
 there is a surjection $V \to U$ and a section $t \in y(V)$ such that $s|_V = f(t)$.
 
 \item If $s \in y$, then $\bar{s} = 0 \in \on{coker}(f)$ if and only if $s \in \on{im}(f)$.  This
 is code for: if $s \in y(U)$ and $\bar{s} = 0 \in \on{coker}(f)(U)$, then there is some surjection
 $V \to U$ and $t \in x(V)$ with $s|_V = f(t)$.
 
 \item For $s, t \in x$, $s = t$ if and only if $s - t = 0$.  Here, the sum of maps $\map{s}{U}{x}$
 and $\map{t}{V}{x}$ is obtained by forming the fibered product $W = U \times_x V$ which covers both
 $U$ and $V$, and then taking the sum of the maps $s|_W, t|_W \in \on{Hom}(W,x)$; the condition for
 equality is just the statement that a section of a sheaf vanishes if only it vanishes on an open
 cover.
\end{enumerate}
Any other arguments involving elements and some concept related to exactness can also be phrased in
this way.  Thus, a na\"ive diagram-chasing argument can be converted into a rigorous one simply by
replacing statements like $s \in x$ with correct ones $s \in x(U)$ for some open set $U$, and
passing to surjective covers of $U$ when necessary.

\subsection*{Derived category and functors}
All the action takes place in the derived category; specifically, let $X$ be an algebraic variety
and denote by $\cat{D}(X)$ its derived category of bounded complexes of sheaves of vector spaces
with constructible cohomology.  By definition, a map of complexes $\map{f}{A^\bullet}{B^\bullet}$
defines an isomorphism in $\cat{D}(X)$ if and only if its associated map on cohomology sheaves
$\map{H^i(f)}{H^i(A^\bullet)}{H^i(B^\bullet)}$ is an isomorphism for all $i$.  We have a notation
for the index-shift: $A^{i + 1} = (A[1])^i$ (technically, the differential maps also change sign,
but we will never need to think about this).  The derived category $\cat{D}(X)$ is a ``triangulated
category'', which means merely that in it are a class of triples, called ``distinguished
triangles'', of complexes and maps
\begin{equation*}
 A^\bullet \to B^\bullet \to C^\bullet \to A^\bullet[1]
\end{equation*}
in which two consecutive arrows compose to zero, satisfying the axioms given in, for example,
\cite{GM} (but see also \ref{sec:comments}), and with the property that the associated long
sequence of cohomology sheaves
\begin{equation*}
 \dots H^{-1}(C^\bullet) \to H^0(A^\bullet) \to H^0(B^\bullet) \to H^0(C^\bullet) \to H^1(A^\bullet)
\to \dots
\end{equation*}
is exact (note that $H^0(A^\bullet [1]) = H^1(A^\bullet)$); we say that the $H^i$ are
``cohomological''.  If $\map{f}{A^\bullet}{B^\bullet}$ is given, there always exists a triangle
whose third term $C^\bullet = \on{Cone}(f)$ is the ``cone'' of $f$; this cone is unique up to
nonunique isomorphism and any commutative diagram of maps $f$ induces a map on cones, but this
is not functorial.  It follows that the induced triangle itself is unique up to a nonunique
isomorphism whose component morphisms on $A^\bullet$ and $B^\bullet$ are the identity maps.  A
functor between two triangulated categories is ``triangulated'' if it sends triangles in one to
triangles in the other.

In $\cat{D}(X)$ we also have some standard constructions of sheaf theory.  For any two complexes
there is the ``total tensor product'' $A^\bullet \tensor B^\bullet$ obtained by taking in degree $n$
the direct sum of all products $A^i \tensor B^j$ with $i + j = n$ (and some differentials that are
irrelevant) and its derived bifunctor $A^\bullet \tensor^L B^\bullet$, with $H^i(A^\bullet \tensor^L
B^\bullet) = \on{Tor}^i(A^\bullet, B^\bullet)$, which is a triangulated functor in each variable. We
also have the bifunctor (contravariant in the first argument) $\shHom(A^\bullet, B^\bullet)$, whose
terms are $\shHom(A^\bullet, B^\bullet)^i(U) = \on{Hom}(A^\bullet|_U, B^\bullet[i]|_U)$, and its
derived bifunctor $R\shHom(A^\bullet, B^\bullet)$, with $H^i R\shHom(A^\bullet, B^\bullet) =
\on{Ext}^i(A^\bullet, B^\bullet)$, which is triangulated in each variable. Of course, these two have
an adjunction:
\begin{equation*}
 R\shHom(A^\bullet \tensor^L B^\bullet, C^\bullet) \cong R\shHom(A^\bullet, R\shHom(B^\bullet,
C^\bullet)).
\end{equation*}

For any Zariski-open subset $U \subset X$ with inclusion map $j$, there are triangulated functors
$\map{j_!, j_*}{\cat{D}(U)}{\cat{D}(X)}$ and $\map{j^* = j^!}{\cat{D}(X)}{\cat{D}(U)}$; if $i$ is
the inclusion of its complement $Z$, then there are likewise maps $\map{i^!,
i^*}{\cat{D}(X)}{\cat{D}(Z)}$ and $\map{i_* = i_!}{\cat{D}(Z)}{\cat{D}(X)}$.  (Technically
the operation $j_*$ is only left exact on sheaves and we should write $Rj_*$ for its derived
functor, but we will never have occasion to use the plain version so we elide this extra notation.) 
They satisfy a number of important relations, of which we will only use one here: there is a
functorial triangle in the complex $A^\bullet_X \in \cat{D}(X)$:
\begin{equation}
 \label{eq:extension triangle}
 j_!j^*(A^\bullet_X) \to A^\bullet_X \to i_* i^* (A^\bullet_X) \to
\end{equation}
We will generally forget about writing $i_*$ and consider $\cat{D}(Z) \subset \cat{D}(X)$.

There is also a triangulated duality functor $\map{\DD}{\cat{D}(X)}{\cat{D}(X)^\op}$ which
interchanges $!$ and $*$, in that $\DD j_* (A^\bullet_U) = j_!(\DD A^\bullet_U)$, etc., and is an
involution.  In fact, if we set $\sh{D}^\bullet_X = \DD \csheaf \C$, then $\DD(A^\bullet) =
R\shHom(A^\bullet, \sh{D}^\bullet_X)$.

For any map $\map{f}{X}{Y}$ of varieties, we have $f^!, f^*$ as well (also $f_!, f_*$, and none of
them are equal), with the same relationships to $\DD$, and the useful identity
\begin{equation}
 \label{eq:exceptional hom localization}
 f^! R\shHom(A^\bullet_Y, B^\bullet_Y) = R\shHom(f^* A^\bullet_Y, f^! B^\bullet_Y).
\end{equation}
Note that by these properties, we have $f^!\sh{D}^\bullet_Y = \sh{D}^\bullet_X$.

\subsection*{Perverse sheaves}
Here we give a detail-free overview of the formalism of perverse sheaves created in \cite{BBD}.
Within $\cat{D}(X)$ there is an abelian category $\cat{M}(X)$ of ``perverse sheaves'' which has
nicer properties than the category of actual sheaves.  It is specified by means of a
``t-structure'', namely, a pair of full subcategories $\perv \cat{D}(X)^{\leqslant 0}$ and $\perv
\cat{D}(X)^{\geqslant 0}$, also satisfying some conditions we won't use, and such that
\begin{equation*}
 \cat{M}(X) = \perv\cat{D}(X)^{\leqslant 0} \cap \perv\cat{D}(X)^{\geqslant 0}.
\end{equation*}
There are truncation functors $\map{\tau^{\leqslant 0}}{\cat{D}(X)}{\perv\cat{D}(X)^{\leqslant 0}}$
and
likewise for $\tau^{\geqslant 0}$, fitting into a distinguished triangle for any complex
$A^\bullet_X \in \cat{D}(X)$:
\begin{equation*}
 \tau^{\leqslant 0} A^\bullet_X \to A^\bullet_X \to \tau^{> 0} A^\bullet_X \to
\end{equation*}
(where $\tau^{> 0} = \tau^{\geqslant 1} = [-1] \circ \tau^{\geqslant 0} \circ [1]$).  This triangle
is \emph{unique} with respect to the property that the first term is in $\perv\cat{D}(X)^{\leqslant
0}$ and the third is in $\perv\cat{D}(X)^{> 0}$.  They have the obvious properties implied by the
notation: $\tau^{\leqslant a} \tau^{\leqslant b} = \tau^{\leqslant a}$ if $a \leq b$, and likewise
for $\tau^{\geqslant ?}$. Furthermore, there are ``perverse cohomology'' functors $\map{\perv
H^i}{\cat{D}(X)}{\cat{M}(X)}$, where of course $\perv H^i(A^\bullet) = \perv H^0(A^\bullet[i])$ and
$\perv H^0 = \tau^{\geqslant 0} \tau^{\leqslant 0} = \tau^{\leqslant 0} \tau^{\geqslant 0}$; these
are cohomological just like the ordinary cohomology functors. The abelian category structure of
$\cat{M}(X)$ is more or less determined by the fact that if we have a map $\map{f}{\sh{F}}{\sh{G}}$
of perverse sheaves (this is the notation we will be using; we will not think of perverse sheaves as
complexes), then
\begin{align*}
 \on{ker} f = \perv H^{-1} \on{Cone}(f) && \on{coker} f = \perv H^0 \on{Cone}(f).
\end{align*}
For notational convenience, we will write $\sh{M}$ for a perverse sheaf on $U$, $\sh{F}$ for one on
$X$, and as usual, abandon $i_*$ and just consider $\cat{M}(Z) \subset \cat{M}(X)$ (for the reason
expressed immediately below, this is reasonable).

The category $\cat{M}(X)$ is closed under the duality functor $\DD$, but not necessarily under the
six functors defined for an open/closed pair of subvarieties.  However, it is true that
$j_*(\sh{M}), i^!(\sh{F}) \in \perv \cat{D}^{\geqslant 0}$ and $j_!(\sh{M}), i^*(\sh{F}) \in \perv
\cat{D}^{\leqslant 0}$, while $j^*(\sh{F}), i_*(\sh{F}_Z) \in \cat{M}$ ($\sh{F}_Z$ a perverse sheaf
on $Z$); we say these functors are right, left, or just ``t-exact''.  Furthermore, when $j$ is an
affine morphism (the primary example being when $Z$ is a Cartier divisor), both $j_!$ and $j_*$ are
t-exact, and thus their restriction to $\cat{M}(U)$ is exact with values in $\cat{M}(X)$.  There is
also a ``minimal extension'' functor $j_{!*}$, defined so that $j_{!*}(\sh{M})$ is the image of
$\perv H^0(j_! \sh{M})$ in $\perv H^0(j_* \sh{M})$ along the natural map $j_! \to j_*$; it is the
unique perverse sheaf such that $i^* j_{!*} \sh{M} \in \perv \cat{D}^{< 0}(Z)$ and $i^! j_{!*}
\sh{M} \in \perv \cat{D}^{> 0}(Z)$, but for us the most useful property is that when $j$ is an
affine, open immersion, then we have a sequence of \emph{perverse sheaves}
\begin{equation}
 \label{eq:kernel and cokernel}
 i^* j_{!*} \sh{M} [-1] \incl j_! \sh{M} \surj j_{!*} \sh{M} \incl j_* \sh{M}
  \surj i^! j_{!*}  \sh{M}[1];
\end{equation}
i.e.\ $i^* j_{!*} \sh{M} [-1] = \on{ker}(j_! \sh{M} \to j_* \sh{M})$ and $i^! j_{!*} \sh{M}[1] =
\on{coker}(j_! \sh{M} \to j_* \sh{M})$ are both perverse sheaves.

Perverse sheaves have good category-theoretic properties: $\cat{M}(X)$ is both artinian and
noetherian, so every perverse sheaf has finite length.  Finally, we will use the sheaf-theoretic
fact that if $\sh{L}$ is a locally constant sheaf on $X$, then $\sh{F} \tensor \sh{L}$ is perverse
whenever $\sh{F}$ is.  Note that since $\sh{L}$ is locally free, it is flat, and therefore $\sh{F}
\tensor \sh{L} = \sh{F} \tensor^L \sh{L}$.

\subsection*{Nearby cycles}
If we have a map $\map{f}{X}{\Aff^1}$ such that $Z = f^{-1}(0)$ (so $U = f^{-1}(\Aff^1 \setminus
\{0\}) = f^{-1}(\Gm)$), the ``nearby cycles'' functor $\map{R\psi_f}{\cat{D}(U)}{\cat{D}(Z)}$ is
defined.  Namely, let $\map{u}{\tilde{\Gm}}{\Gm}$ be the universal cover of $\Gm = \Aff^1 \setminus
\{0\}$, let $\map{v}{\tilde{U} = U \times_{\Gm} \tilde{\Gm}}{U}$ be its pullback, forming a diagram
\begin{equation*}
 \xymatrix@R-2em{
                     &             &                       & {\tilde{U}} \ar[dl]_{v} \ar[dd] \\
  Z \ar[r]^i \ar[dd] & X \ar[dd]^f & U \ar[l]_{j} \ar[dd] &                                  \\
                     &             &                       & {\tilde{\Gm}} \ar[dl]^{u}       \\
  \{0\} \ar[r]       & \Aff^1      & \Gm \ar[l]            & 
 }
\end{equation*}
and set (in this one instance, explicitly writing $j_*$ and $v_*$ as non-derived functors)
\begin{equation*}
 R\psi_f = R(i^* j_* v_* v^*) \colon \cat{D}(U) \to \cat{D}(Z).
\end{equation*}
Since $i^*$ and $v^*$ are exact, indeed $\psi_f$ is a left-exact functor from sheaves on $U$ to
sheaves on $Z$.  Many sources (e.g.\ \cite{schurmann}*{\S1.1.1}) give the definition $R\psi_f = i^*
Rj_* Rv_* v^*$; in fact, they are the same: since $v$ is a covering map, if $\sh{F}$ is a
flasque sheaf on $U$, then $v^* \sh{F}$ is flasque on $\tilde{U}$ and so acyclic for $v_*$ (and $v_*
v^* \sh{F}$ acyclic for $j_*$). Therefore we may form the derived functor before or after
composition. Note that $v$ is not an algebraic map, and therefore it is not \textit{a priori} clear
whether $R\psi_f$ preserves constructibility; that it does is a theorem of Deligne (\cite{SGA},
Expos\'e XIII, Th\'eor\`eme 2.3 for \'etale sheaves and Expos\'e XIV, Th\'eor\`eme 2.8 for the
comparison with classical nearby cycles).

The fundamental group $\pi_1(\Gm)$ acts on any $v^* A^\bullet_U$ via deck transformations of
$\tilde{\Gm}$ and therefore acts on $\psi_f$ and $R\psi_f$.  There is a natural map $i^* A^\bullet_X
\to \psi_f(j^* A^\bullet_X)$, obtained from $(v^*, v_*)$-adjunction, on whose image $\pi_1(\Gm)$
acts trivially. We set, by definition,
\begin{equation*}
 i^* A^\bullet_X \to \psi_f(j^* A^\bullet_X) \to \phi_f(A^\bullet_X) \to 0
\end{equation*}
where $\phi_f(A^\bullet_X)$ is the ``vanishing cycles'' sheaf.  Using some homological algebra
tricks the above sequence induces a natural distinguished triangle
\begin{equation*}
 i^* A^\bullet_X \to R\psi_f(j^* A^\bullet_X) \to R\phi_f(A^\bullet_X) \to
\end{equation*}
where $R\phi_f$ is (morally) the right derived functor of $\phi_f$.  Like $R\psi_f$, $R\phi_f$ has
a monodromy action of $\pi_1(\Gm)$; this action is one of the maps on the cone of the above
triangle induced by the monodromy action on $R\psi_f$, but as this is not functorial, one should
consult the real definition in \cite{SGA} (given for the algebraic nearby cycles, but see also the
second expos\'e).

\begin{theorem}{lem}{unipotent nearby cycles}
 There exists a unique decomposition of $R\psi_f$ as $R\psiun_f \oplus R\psi_f^{\neq 1}$, where for
 any choice of generator $t$ of $\pi_1(\Gm)$, $1 - t$ acts nilpotently on $R\psiun_f(A^\bullet_U)$
 for any complex $A^\bullet_U$ and is an automorphism of $R\psi_f^{\neq 1}$.
\end{theorem}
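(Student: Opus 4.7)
The plan is to decompose each complex $K = R\psi_f(A^\bullet_U)$, viewed in $\cat{D}(Z)$ together with its monodromy endomorphism $t$, into a direct sum $K^{\text{un}} \oplus K^{\neq 1}$ of $t$-stable summands on which $t - 1$ is nilpotent and invertible respectively.  Once this is done for each object, functoriality in $A^\bullet_U$ and uniqueness will follow from a canonical characterization of the two summands.

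The fundamental input is \emph{local quasi-unipotence of monodromy}: by the local monodromy theorem, for each $z \in Z$ and each $i$, $t$ acts quasi-unipotently on the finite-dimensional stalk $H^i(K)_z$.  Because $K$ is bounded with constructible cohomology and $Z$ admits a finite stratification, only finitely many eigenvalues occur across all such stalks, so I would choose a single polynomial $P(X) = (X - 1)^N Q(X)$ with $Q(1) \neq 0$ that annihilates $t$ on every cohomology sheaf of $K$ simultaneously.

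The main technical step is then to upgrade this cohomological annihilation into an annihilation in the derived category.  Using induction on the cohomological amplitude $[a,b]$ of $K$ via the truncation triangles $\tau^{\leqslant m} K \to K \to \tau^{> m} K$, I would show $P(t)^{b - a + 1} = 0$ in $\on{End}_{\cat{D}(Z)}(K)$.  B\'ezout applied to the coprime polynomials $(X - 1)^{N(b - a + 1)}$ and $Q(X)^{b - a + 1}$ then produces orthogonal idempotents $e_{\text{un}}, e_{\neq 1} \in \on{End}(K)$ summing to the identity; since the bounded constructible derived category is idempotent-complete, these split $K$ into the required two summands.  A second application of B\'ezout, using $Q(1) \neq 0$, gives an explicit polynomial inverse to $t - 1$ on $K^{\neq 1}$, while its nilpotence on $K^{\text{un}}$ is transparent from the defining formula.

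For functoriality and uniqueness I would observe that $K^{\text{un}}$ admits a canonical description as the generalized $1$-eigenspace of $t$ on every cohomology stalk, so any $t$-equivariant morphism automatically respects the decomposition; this promotes the object-level splitting to a splitting of functors and forces uniqueness.  The main obstacle is the inductive step $P(t)^{b - a + 1} = 0$, since in a triangulated category an endomorphism is not generally determined by its action on cohomology; the argument works here because $P(t)$ vanishes (rather than merely being nilpotent) on each cohomology sheaf and $K$ has finite amplitude, which together let the truncation-triangle induction close.
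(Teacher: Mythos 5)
Your approach is genuinely different from the paper's. The paper first decomposes the \emph{underived} left-exact functor $\psi_f$ at the level of sheaves: constructibility gives finite-dimensional sections, the structure theory of finite-dimensional $k[x]$-modules splits off the generalized $1$-eigenspace functorially, and one then passes to derived functors to obtain $R\psi_f = R\psiun_f \oplus R\psi_f^{\neq 1}$, with nilpotence of $1-t$ on $R\psiun_f$ read off from a complex of injectives. You instead work directly with the object $K = R\psi_f(A^\bullet_U)$ in $\cat{D}(Z)$: exhibit a polynomial $P = (X-1)^N Q$ with $Q(1) \neq 0$ killing $t$ on cohomology, promote this to $P(t)^{b-a+1} = 0$ in $\on{End}(K)$ by induction on amplitude via the truncation triangles, and split $K$ via B\'ezout together with idempotent completeness. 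Both arguments are sound. Yours is arguably more explicit about a point the paper elides---turning termwise nilpotence on an injective resolution into nilpotence of $1-t$ as an endomorphism of a bounded complex requires exactly your amplitude induction---while the paper's route produces a splitting of \emph{functors} directly and never needs idempotent completion; your final step must supply functoriality and uniqueness after the fact (take a common annihilating polynomial for source and target of a morphism, and note that the resulting idempotent is independent of the choice of $P$, e.g.\ by the nilpotent-versus-invertible intertwining argument the paper uses for uniqueness).

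One claim is wrong, though harmlessly so: the appeal to the local monodromy theorem to conclude that $t$ acts quasi-unipotently on the stalks of $H^i(K)$. This fails in general---take $\sh{M}$ a rank-one local system on $\Gm$ with monodromy $\lambda$ not a root of unity and $f$ the identity; then $t$ acts by $\lambda$ on $R\psi_f(\sh{M})$. What your argument actually needs, and what your next sentence correctly supplies, is only that a single polynomial annihilates $t$ on all cohomology sheaves, which follows from constructibility and a finite stratification of $Z$: each $H^i(K)$ is locally constant with finite-dimensional stalks on finitely many strata, so only finitely many minimal polynomials occur. Drop the quasi-unipotence assertion and the proof stands.
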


The part $R\psiun_f$ is called the functor of \emph{unipotent nearby cycles}.

\begin{proof}
 To start, we observe that for any sheaf $\sh{F}$ on $U$, we have $\psi_f(\sh{F}) = H^0
 R\psi_f(\sh{F})$, and so if $\sh{F}$ is constructible, by the constructibility of nearby cycles so
 is $\psi_f(\sh{F})$; thus, for any open $V \subset X$, $\psi_f(\sh{F})(V)$ is finite-dimensional. 
 Let $\psiun_f \subset \psi_f$ be the subfunctor such that for any sheaf $\sh{F}$ on $U$,
 $\psiun_f(\sh{F})$ is the subsheaf of $\psi_f(\sh{F})$ in which $1 - t$ is nilpotent, so for each
 $V$, $\psiun_f(\sh{F})(V)$ is the generalized eigenspace of $t$ with eigenvalue $1$.

 Therefore it is actually a direct summand; we recall the general argument which works over any
 field $k$.  If $T$ is an endomorphism of a finite-dimensional vector space $M$, we view $M$ as a
 $k[x]$-module with $x$ acting as $T$. By the classification of modules over a principal ideal
 domain, we have $M \cong \bigoplus k[x]/p(x)$ for certain polynomials $p(x)$. The generalized
 eigenspace with eigenvalue $1$ is then the sum of those pieces for which $p(x)$ is a power of $1 -
 x$, and the remaining summands are a $T$-invariant complement in which $1 - T$ acts invertibly. As
 the image of $(1 - T)^n$ ($n \gg 0$), this complement is functorial in the category of
 finite-dimensional $k[x]$-modules and so we have the same decomposition in sheaves of
 finite-dimensional $k[x]$-modules.
 
 Specializing again to the present situation, we get a decomposition $\psi_f(\sh{F}) \cong
 \psiun_f(\sh{F}) \oplus \psi_f^{\neq 1}$ (this is the definition of $\psi_f^{\neq 1}$).  Both
 summands are \textit{a fortiori} left-exact functors and taking derived functors, we obtain a
 decomposition:
 \begin{equation*}
  R\psi_f \cong R\psiun_f \oplus R\psi^{\neq 1}_f.
 \end{equation*}
 Since $1 - t$ is nilpotent on any $\psiun_f(\sh{F})$, we may apply $\psiun_f$ to any constructible
 complex of injectives, thus computing $R\psiun_f(A^\bullet_U)$ for any complex $A^\bullet_U$, and
 conclude that $1 - t$ is nilpotent on each such; by general principles it is invertible on
 $R\psi_f^{\neq 1}$. This is what we want.
 
 Uniqueness of the decomposition is clear; indeed, if in any category with a zero object we have
 objects $x$ and $y$ together with endomorphisms $N$ and $I$ respectively such that $N$ is nilpotent
 and $I$ invertible, then any map $\map{g}{x}{y}$ intertwining $N$ and $I$ is zero: we have $gN =
 Ig$, so $g = I^{-1} g N = I^{-2} g N^2 = \dots = I^{-n} g N^n = 0$ if $N^n = 0$.  For morphisms $y
 \to x$ we work in the opposite category. In particular, if we have \begin{equation*} R\psi_f \cong
 F \oplus G \end{equation*} as a sum of two functors as in the statement of the lemma, then the
 identity map on $R\psi_f$ has no $G$-component on $R\psiun_f$ and no $F$-component on
 $R\psi_f^{\neq 1}$, and so induces isomorphisms $R\psiun_f \cong F$ and $R\psi_f^{\neq 1} \cong G$.
\end{proof}

We note that this lemma is a special case of \ref{nearby cycles decomposition} when the field of
coefficients is algebraically closed.  However, this decomposition is defined over any field.

There is a triangle, functorial in $A^\bullet_X$,
\begin{equation*}
 i^* j_* j^* A^\bullet_X \to R\psi_f(j^* A^\bullet_X) \xrightarrow{1 - t} R\psi_f(j^* A^\bullet_X)
\end{equation*}
(see \citelist{\cite{brylinski}*{Prop. 1.1} \cite{schurmann}*{eq. (5.88)}}) which, taking
$A^\bullet_U = j^* A^\bullet_X$ and inserting $R\psiun_f$ because the monodromy acts trivially on
the first term, gives the extremely important (for us) triangle
\begin{equation}
 \label{eq:psiun}
 i^* j_* A^\bullet_U \to R\psiun_f(A^\bullet_U) \xrightarrow{1 - t} R\psiun_f(A^\bullet_U) \to
\end{equation}
We also have a unipotent part of the vanishing cycles functor $R\phi_f$, and, again since the
monodromy acts trivially on $i^* A^\bullet_X$, a corresponding triangle
\begin{equation}
 \label{eq:phiun}
 i^* A^\bullet_X \to R\psiun_f(A^\bullet_X) \to R\phiun_f(A^\bullet_X) \to
\end{equation}
If $\sh{L}$ is any locally constant sheaf on $\Gm$ with underlying vector space $L$ and unipotent
monodromy, then $R\psiun_f(A^\bullet_U \tensor f^* \sh{L}) \cong R\psiun_f(A^\bullet_U) \tensor L$,
where $\pi_1(\Gm)$ acts on the tensor product by acting on each factor (since $\sh{L}$ is
trivialized on $\tilde{\Gm}$).

We note the following fact, crucial to all computations in this paper:
\begin{center}
 \emph{$j$ is an affine morphism.}
\end{center}
Indeed, $Z$ is cut out by a single algebraic equation.
Although when $Z$ is any Cartier divisor it is still locally defined by equations $f$ and the
inclusion $j$ of its complement is again an affine morphism, it is not necessarily possible to glue
nearby cycles which are locally defined as above; c.f.\ \cite{survey}*{Remark 5.5.4}; an explicit
example will appear in \cite{MTZ}.  However, it follows from \ref{nearby cycles computations} that
when $\sh{M}$ is a perverse sheaf and $R\psiun_f(\sh{M})$ has trivial monodromy, it is in fact
independent of $f$ and gluing is indeed possible.


Triangle \ref{eq:psiun} already implies that nearby cycles preserve perverse sheaves.

\begin{theorem}{lem}{nearby cycles are perverse}
 The functor $R\psiun_f[-1]$ sends $\perv \cat{D}(U)^{\leqslant 0}$ to $\perv \cat{D}(Z)^{\leqslant
 0}$ and takes $\cat{M}(U)$ to $\cat{M}(Z)$.
\end{theorem}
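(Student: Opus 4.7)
The plan is to pass to the long exact sequence of perverse cohomology attached to the triangle (\ref{eq:psiun}) and exploit the nilpotence of $1-t$ on $R\psiun_f$; along the way I will use repeatedly that a nilpotent endomorphism $N$ of a perverse sheaf which is either surjective or injective must vanish, since the relation $N^n=0$ combined with surjectivity (resp.\ injectivity) forces the target (resp.\ source) to be zero.

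For the first assertion, I would first observe that $i^* j_* A^\bullet_U \in \perv\cat{D}^{\leqslant 0}(Z)$ whenever $A^\bullet_U \in \perv\cat{D}^{\leqslant 0}(U)$, using t-exactness of $j_*$ (since $j$ is affine) and right t-exactness of $i^*$. Applying $\perv H^\bullet$ to (\ref{eq:psiun}), the long exact sequence shows that for each $k \geq 0$ the connecting map out of $\perv H^k(R\psiun_f A^\bullet_U)$ lands in $\perv H^{k+1}(i^* j_* A^\bullet_U) = 0$, so $1-t$ is surjective there; nilpotence then forces vanishing, placing $R\psiun_f A^\bullet_U$ in $\perv\cat{D}^{\leqslant -1}(Z)$. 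Shifting gives the first assertion.

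For the second assertion, given $\sh{M} \in \cat{M}(U)$, I still need $R\psiun_f \sh{M} \in \perv\cat{D}^{\geqslant 1}(Z)$, and the hinge is the sharper bound $i^* j_* \sh{M} \in \perv\cat{D}^{[-1,0]}(Z)$. Both $j_! \sh{M}$ and $j_* \sh{M}$ are perverse (as $j$ is affine), and the map between them in $\cat{M}(X)$ has perverse kernel and cokernel both supported on $Z$ by (\ref{eq:kernel and cokernel}); hence its cone lies in $\perv\cat{D}^{[-1,0]}(X)$, and applying $i^*$ gives $i^* j_* \sh{M}$ because $i^* j_! \sh{M} = 0$. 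Re-running the long exact sequence for $k \leq -2$ now shows that $1-t$ is injective on $\perv H^k(R\psiun_f \sh{M})$, so by nilpotence this cohomology vanishes. Combined with the first assertion, $R\psiun_f \sh{M}$ is concentrated in perverse degree $-1$, so $R\psiun_f[-1] \sh{M}$ is perverse. The main obstacle is precisely this sharpening of the bound on $i^* j_* \sh{M}$: without it, the long exact sequence gives no control over the lower perverse cohomology of $R\psiun_f \sh{M}$.
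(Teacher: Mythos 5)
Your proof is correct and follows essentially the same line of reasoning as the paper's: the long exact sequence of perverse cohomology applied to triangle (\ref{eq:psiun}), the key inputs being $i^* j_* A^\bullet_U \in \perv\cat{D}^{\leqslant 0}(Z)$ for the general bound and the sharper $i^* j_* \sh{M} \in \perv\cat{D}^{[-1,0]}(Z)$ for perverse $\sh{M}$, with nilpotence of $1-t$ combined with surjectivity (resp.\ injectivity) forcing the relevant cohomology to vanish. The only cosmetic difference is that you justify the $\leqslant 0$ bound on $i^* j_* A^\bullet_U$ via right t-exactness of $i^*$ rather than via the cone description $\on{Cone}(j_! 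A^\bullet_U \to j_* A^\bullet_U)$ that the paper uses; both are standard and equivalent here.
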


\begin{proof}
 Since $j$ is affine and an open immersion, $j_*$ and $j_!$ are t-exact, so for any $A^\bullet_U \in
 \perv \cat{D}(U)^{\leqslant 0}$, $i^* j_* A^\bullet_U = \on{Cone}(j_! A^\bullet_U \to j_*
 A^\bullet_U)$ is in $\perv \cat{D}(Z)^{\leqslant 0}$.  If we apply the long exact sequence of
 perverse cohomology to triangle \ref{eq:psiun}, we therefore get in nonnegative degrees:
 \begin{multline*}
  \perv H^0(R\psiun_f A^\bullet_U) \xrightarrow{1 - t} \perv H^0(R\psiun_f A^\bullet_U)
        \to (0 = \perv H^1(i^* j_* A^\bullet_U)) \to \\
  \perv H^1(R\psiun_f A^\bullet_U) \xrightarrow{1 - t} \perv H^1(R\psiun_f A^\bullet_U)
        \to (0 = \perv H^2(i^* j_* A^\bullet_U)) \to \cdots
 \end{multline*}
 For $i \geq 0$, the map $\perv H^i(R\psiun_f A^\bullet_U) \to \perv H^i(R\psiun_f A^\bullet_U)$ is
 both given by a \emph{nilpotent} operator and is surjective, so zero. It follows that
 $R\psiun_f(A^\bullet_U) \in \perv \cat{D}(Z)^{\leqslant -1}$, as promised.

 Now let $\sh{M} \in \cat{M}(U)$ be a perverse sheaf.  Then $i^* j_* \sh{M} \in \perv
 \cat{D}(Z)^{[-1,0]}$ since its perverse cohomology sheaves are the kernel and cokernel of the map
 $j_! \sh{M} \to j_* \sh{M}$.  In degrees $\leq -2$, then, we have
 \begin{multline*}
 \dots \to (0 = \perv H^{-3}(i^* j_* \sh{M})) \to
       \perv H^{-3}(R\psiun_f \sh{M}) \xrightarrow{1 - t} \perv H^{-3}(R\psiun_f \sh{M}) \to \\
  (0 = \perv H^{-2}(i^* j_* \sh{M})) \to
         \perv H^{-2}(R\psiun_f \sh{M}) \xrightarrow{1 - t} \perv H^{-2}(R\psiun_f \sh{M})
 \end{multline*}
 This means that for $i \leq -2$, all the maps $1 - t$ are injective and nilpotent, hence zero.
 Thus $R\psiun_f(\sh{M}) \in \perv \cat{D}(Z)^{-1}$, as desired.
\end{proof}

Since $R\psiun_f[-1]$ acts on perverse sheaves, we will give it the abbreviated notation $\Psiun_f$.

\section{Construction of the unipotent nearby cycles functor}
\label{sec:unipotent cycles}

Let $L^a$ be the vector space of dimension $a \geq 0$ together with the action of a matrix $J^a =
[\delta_{ij} - \delta_{i, j - 1}]$, a unipotent (variant of a) Jordan block of dimension $a$.  Let
$\sh{L}^a$ be the locally constant sheaf on $\Gm$ whose underlying space is $L^a$ and in whose
monodromy action a (hereafter fixed choice of) generator $t$ of $\pi_1(\Gm)$ acts by $J^a$.  Since
it is locally free, it is flat, so we will write $\tensor$ rather than $\tensor^L$ in tensor
products with it (actually, with $f^* \sh{L}^a$).  It has the following self-duality properties,
where $\check{\sh{L}}^a = \shHom(\sh{L}^a, \csheaf{\C})$ is the dual local system and
$(\sh{L}^a)^{-1}$ is the local system in whose monodromy $t$ acts by $(J^a)^{-1}$:

\begin{theorem}{lem}{jordan block dual}
 We have $\sh{L}^a \cong \check{\sh{L}}^a \cong (\sh{L}^a)^{-1}$, and $\DD(A^\bullet_U
 \tensor f^* \sh{L}^a) \cong \DD(A^\bullet_U) \tensor f^* \sh{L}^a$ for $A^\bullet_U \in
 \cat{D}(U)$.
\end{theorem}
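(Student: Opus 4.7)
The plan is to first establish the three isomorphisms of local systems, and then derive the duality statement formally from them using the tensor-hom adjunction.

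For the isomorphisms $\sh{L}^a \cong \check{\sh{L}}^a \cong (\sh{L}^a)^{-1}$, I would argue purely on the level of monodromy: a local system on $\Gm$ is the same as a vector space with a distinguished automorphism (the action of $t$), and an isomorphism of local systems is a conjugacy of the corresponding automorphisms. The monodromy of $(\sh{L}^a)^{-1}$ is $(J^a)^{-1}$, and the monodromy of $\check{\sh{L}}^a$ is the inverse transpose $(J^a)^{-T}$. Writing $J^a = I - N$ with $N$ nilpotent of index exactly $a$, one checks that $(J^a)^{-1} = I + N + \cdots + N^{a-1}$ is unipotent with $(J^a)^{-1} - I$ also nilpotent of index exactly $a$; similarly for $(J^a)^{-T}$. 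Hence all three matrices are unipotent with a single Jordan block of size $a$, so by Jordan canonical form they are mutually conjugate in $GL_a$.

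For the duality statement, I would use the formula $\DD(A^\bullet_U) = R\shHom(A^\bullet_U, \sh{D}^\bullet_U)$ together with the tensor-hom adjunction recalled in the preliminaries to compute
\begin{equation*}
 \DD(A^\bullet_U \tensor f^*\sh{L}^a) = R\shHom(A^\bullet_U \tensor^L f^*\sh{L}^a, \sh{D}^\bullet_U)
  \cong R\shHom(A^\bullet_U, R\shHom(f^*\sh{L}^a, \sh{D}^\bullet_U)).
\end{equation*}
Since $f^*\sh{L}^a$ is locally free (hence flat, and with $\shHom$ from it equal to $R\shHom$), the inner term simplifies as $R\shHom(f^*\sh{L}^a, \sh{D}^\bullet_U) \cong f^*\check{\sh{L}}^a \tensor \sh{D}^\bullet_U$. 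Pulling the flat factor back out of $R\shHom$ then gives $\DD(A^\bullet_U) \tensor f^*\check{\sh{L}}^a$, and applying the first part to replace $\check{\sh{L}}^a$ by $\sh{L}^a$ yields the claim.

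The only genuine obstacle is the self-duality of the Jordan block, and even that is a standard linear algebra fact; the rest is bookkeeping with the six functors. I would make sure to note explicitly the use of local freeness when moving $f^*\sh{L}^a$ in and out of $R\shHom$, since this is what lets us avoid any derived tensor product subtleties.
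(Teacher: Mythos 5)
Your proof of the first part (the isomorphisms $\sh{L}^a \cong \check{\sh{L}}^a \cong (\sh{L}^a)^{-1}$) is essentially the paper's: both reduce to the observation that $(J^a)^{-1}$ and $((J^a)^t)^{-1}$ are unipotent with a single Jordan block of size $a$, hence conjugate to $J^a$.

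For the duality statement you take a genuinely different and in fact more direct route. The paper builds the map $\DD(A^\bullet_U) \tensor f^*\sh{L}^a \to \DD(A^\bullet_U \tensor f^*\sh{L}^a)$ by a chain of $(\tensor^L, R\shHom)$ adjunctions that ultimately pushes everything down to a single morphism $\sh{D}^\bullet_{\Gm} \to \DD(\sh{L}^a \tensor \sh{L}^a)$ on the base $\Gm$, invoking $\DD f^* = f^!\DD$, the identity \eqref{eq:exceptional hom localization}, and $\sh{D}^\bullet_U = f^!\sh{D}^\bullet_{\Gm}$; it then checks that the resulting global map is a local isomorphism. You instead compute both sides outright as a chain of natural isomorphisms: one tensor-hom adjunction, the identification $R\shHom(f^*\sh{L}^a, -) \cong f^*\check{\sh{L}}^a \tensor (-)$ for a locally free sheaf, and the fact that a locally free tensor factor passes through $R\shHom$ in the second argument. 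This avoids the six-functor bookkeeping with $f^!$ entirely and never requires a separate ``check it is an isomorphism locally'' step, since every arrow in your chain is already an isomorphism. The paper's approach has the minor virtue of producing an explicit globally defined pairing on $\Gm$ (which is suggestive for later naturality arguments), but for the statement as given your argument is cleaner and complete. One small thing worth saying explicitly, as you gesture at: the isomorphism $R\shHom(A^\bullet, B^\bullet \tensor L) \cong R\shHom(A^\bullet, B^\bullet) \tensor L$ for $L$ locally free of finite rank is itself obtained by constructing the natural map and observing it is locally the identity on finite direct sums, so there is still a ``local'' verification hiding in your argument, just packaged into a standard lemma rather than done by hand.
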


\begin{proof}
 Since $\check{\sh{L}}^a$ is the local system with vector space the dual $\check{L}^a$ and monodromy
 $((J^a)^t)^{-1}$, its monodromy is again unipotent with a single Jordan block of length $a$.  We
 fix in $L^a$ the given basis $\vect{e}_1, \dots, \vect{e}_a$ associated to $J$, and in
 $\check{L}^a$ we choose a generalized eigenbasis $\check{f}_1, \dots, \check{f}_n$ in which
 $((J^a)^t)^{-1}$ has the matrix $J^a$, so that sending $\vect{e}_i \mapsto \check{f}_i$ identifies
 $J^a$ with $((J^a)^t)^{-1}$ and thus induces the desired map of local systems.  The same proof
 shows that $\sh{L}^a \cong (\sh{L}^a)^{-1}$.

 In general, then, we construct an isomorphism:
 \begin{equation}
  \label{eq:dual iso}
  \DD(A^\bullet_U) \tensor f^* \sh{L}^a \xrightarrow{\sim}
  \DD(A^\bullet_U \tensor f^* \sh{L}^a),
 \end{equation}
 where
 \begin{align*}
  \DD(A^\bullet_U) \tensor f^* \sh{L}^a
   &= \smash{R\shHom(A^\bullet_U, \sh{D}^\bullet) \tensor^L f^* \sh{L}^a},
  \\
  \DD(A^\bullet_U \tensor f^* \sh{L}^a)
   &= R\shHom(A^\bullet_U \tensor^L f^*\sh{L}^a, \sh{D}^\bullet_U)
   = R\shHom(A^\bullet_U, \DD f^*\sh{L}^a),
 \end{align*}
 by constructing a map
 \begin{align*}
  \smash{R\shHom(A^\bullet_U, \sh{D}^\bullet_U) \tensor^L f^* \sh{L}^a} \to
  R\shHom(A^\bullet_U, \DD f^* \sh{L}^a).
 \end{align*}
 Such a map can be obtained by applying $(\tensor^L, R\shHom)$-adjunction to a map:
 \begin{multline}
  \label{eq:dual iso adjoint}
  R\shHom(A^\bullet_U, \sh{D}_U^\bullet)
    \to R\shHom(f^* \sh{L}^a, R\shHom(A^\bullet_U, \DD f^* \sh{L}^a)) \\
    = R\shHom(A^\bullet_U, R\shHom(f^* \sh{L}^a, \DD f^* \sh{L}^a)).
 \end{multline}
 Since $\DD$ exchanges $!$ and $*$, we have $\DD f^* \sh{L}^a = f^! \DD \sh{L}^a$.  By the property
 \ref{eq:exceptional hom localization} of $f^!$, we have
 \begin{equation*}
  R\shHom(f^* \sh{L}^a, f^! \DD \sh{L}^a) = f^! R\shHom(\sh{L}^a, \DD\sh{L}^a)
 \end{equation*}
 Note also that $\sh{D}_U^\bullet = f^! \sh{D}_{\Gm}^\bullet$ by definition and therefore the map
 \ref{eq:dual iso adjoint} can be constructed by applying $R\shHom(A^\bullet_U, f^! \farg)$ to a
 certain map on $\Gm$:
 \begin{equation*}
  \sh{D}_{\Gm}^\bullet \to R\shHom(\sh{L}^a, \DD\sh{L}^a) = \DD(\sh{L}^a \tensor^L \sh{L}^a).
 \end{equation*}
 This map, in turn, is obtained by first replacing the $\tensor^L$ with $\tensor$ (since $\sh{L}^a$
 is locally free) and applying $\DD$ to the pairing
 \begin{equation*}
  \sh{L}^a \tensor \sh{L}^a \to \csheaf{\C}
 \end{equation*}
 given by the isomorphism $\sh{L}^a \cong \check{\sh{L}}^a$ described in the first paragraph.
 Thus, locally \ref{eq:dual iso} is the tautological isomorphism $(\DD A^\bullet_U)^{\oplus a} \cong
 \DD (A^\bullet_U)^{\oplus a}$. Since it is a local isomorphism, it is an isomorphism.
\end{proof}

In the rest of this section, $\sh{M}$ is any object of $\cat{M}(U)$.  The following construction is
Beilinson's definition of the unipotent nearby cycles:

\begin{theorem}{prop}{nearby cycles construction}
 Let $\map{\alpha^a}{j_!(\sh{M} \tensor f^* \sh{L}^a)}{j_*(\sh{M} \tensor f^* \sh{L}^a)}$ be the
 natural map.  Then there is an inclusion $\on{ker}(\alpha^a) \incl \Psiun_f(\sh{M})$,
 identifying the actions of $\pi_1(\Gm)$, which is an isomorphism for all sufficiently large $a$.
 (In fact, it suffices to take $a$ large enough that $(1 - t)^a$ annihilates $\Psiun_f(\sh{M})$.)
\end{theorem}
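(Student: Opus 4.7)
My plan is to apply the fundamental triangle~\eqref{eq:psiun} to the perverse sheaf $\sh{N} := \sh{M} \tensor f^*\sh{L}^a$. Using the tensor identity recorded just after~\eqref{eq:phiun}, one has $R\psiun_f(\sh{N}) \cong R\psiun_f(\sh{M}) \tensor L^a$ with $\pi_1(\Gm)$ acting diagonally, so $\Psiun_f(\sh{N}) \cong \Psiun_f(\sh{M}) \tensor L^a$ is perverse by~\ref{nearby cycles are perverse}. Since $j$ is affine, $j_!\sh{N}$ and $j_*\sh{N}$ are themselves perverse, so the cone triangle $j_!\sh{N} \to j_*\sh{N} \to i^*j_*\sh{N} \to$ immediately places $i^*j_*\sh{N}$ in $\perv\cat{D}(Z)^{[-1,0]}$ with $\perv H^{-1}(i^*j_*\sh{N}) = \on{ker}\alpha^a$ and $\perv H^0(i^*j_*\sh{N}) = \on{coker}\alpha^a$.

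Next I would take the long exact sequence of perverse cohomology of~\eqref{eq:psiun} applied to $\sh{N}$. Because $R\psiun_f(\sh{N}) = \Psiun_f(\sh{N})[1]$ is concentrated in the single perverse degree $-1$, this collapses to a four-term exact sequence
\begin{equation*}
 0 \to \on{ker}\alpha^a \to \Psiun_f(\sh{M}) \tensor L^a \xrightarrow{1 - t_K \tensor J^a} \Psiun_f(\sh{M}) \tensor L^a \to \on{coker}\alpha^a \to 0,
\end{equation*}
where $t_K$ denotes the monodromy on $K := \Psiun_f(\sh{M})$. Writing $N := 1 - t_K$ (nilpotent on $K$) and expanding an element $v = \sum_j k_j \tensor e_j$ in the fixed basis $e_1, \dots, e_a$ of $L^a$, the vanishing of $(1 - t_K \tensor J^a)v$ becomes the recursion $Nk_a = 0$ together with $Nk_j + (1-N)k_{j+1} = 0$ for $1 \le j < a$; since $1-N = t_K$ is invertible, this solves uniquely as $k_j = (-t_K^{-1}N)^{j-1} k_1$, and the closing condition $Nk_a = 0$ is equivalent to $k_1 \in \on{ker}(N^a) \subseteq K$. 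The rule $v \mapsto k_1$ therefore exhibits the asserted inclusion $\on{ker}\alpha^a \incl \Psiun_f(\sh{M})$ with image $\on{ker}((1-t)^a) \subseteq \Psiun_f(\sh{M})$, which coincides with all of $\Psiun_f(\sh{M})$ exactly when $(1-t)^a$ vanishes on $\Psiun_f(\sh{M})$.

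The step I expect to require the most care is the $\pi_1(\Gm)$-equivariance. On $\on{ker}\alpha^a$, the generator $t$ acts via the endomorphism $1 \tensor J^a$ of $\sh{N}$ transferred through the functoriality of $j_!$ and $j_*$, whereas on $\Psiun_f(\sh{M})$ it acts by the nearby-cycles monodromy $t_K$; one must distinguish this tensor-factor action from the diagonal nearby-cycles action on $\Psiun_f(\sh{N})$ (which is the identity on $\on{ker}\alpha^a$). A direct computation with the recursion shows that $1 \tensor J^a$ passes under $v \mapsto k_1$ to $t_K^{-1}$ acting on $k_1$, and the resulting inversion is absorbed by invoking~\ref{jordan block dual}, whose isomorphism $\sh{L}^a \cong (\sh{L}^a)^{-1}$ accounts for the sign convention coming from the preferred basis and yields the required equivariant identification.
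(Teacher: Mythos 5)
Your proposal is correct and follows essentially the same approach as the paper's proof. The only presentational difference is that you construct the inclusion $\on{ker}\alpha^a \hookrightarrow \Psiun_f(\sh{M})$ directly by the rule $v \mapsto k_1$, whereas the paper constructs the inverse map $\map{u}{\Psiun_f(\sh{M})}{\Psiun_f(\sh{M})\tensor L^a}$, $u(x) = ((1-t^{-1})^{i-1}x)_i$, and argues that $u$ is injective with image containing (and, for $a$ large, equal to) $\on{ker}(1-t)$; these are the same identification read in opposite directions. Your recursion $k_{j+1} = -t_K^{-1}N k_j$ is the paper's $x_{i+1} = (1-t^{-1})x_i$ since $1-t^{-1} = -t_K^{-1}N$, and both proofs finish by observing that the identification intertwines the $\sh{L}^a$-action $J^a$ with $t_K^{-1}$ rather than $t_K$ and correcting this via the self-duality isomorphism $\sh{L}^a \cong (\sh{L}^a)^{-1}$ of \ref{jordan block dual}.
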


\begin{proof}
 We know by \ref{nearby cycles are perverse} that $\Psiun_f(\farg)$ is a perverse sheaf, so
 taking together the triangle \ref{eq:psiun} with $A^\bullet_U = \sh{M} \tensor f^* \sh{L}^a$ and
 exact sequence \ref{eq:extension triangle} with $A^\bullet_X = j_* A^\bullet_U$, we see that
 $\on{ker} \alpha = \on{ker}(1 - t)$, where $1 - t$ is the map appearing in the former triangle
 shifted by $-1$.  We also have
 \begin{equation*}
  \Psiun_f(\sh{M} \tensor f^* \sh{L}^a)
    \cong \Psiun_f(\sh{M}) \tensor L^a
    \cong \bigoplus_{i = 1}^a \Psiun_f(\sh{M})_{(i)},
 \end{equation*}
 where the $i$'th coordinate of the action of $t$ is $t_{(i)} - t_{(i + 1)}$, with
 $t_{(i)}$ the copy of $t \in \pi_1(\Gm)$ acting on $\Psiun_f(\sh{M})$ considered as the $i$'th
 summand.  That is, using elements, $(x_1, \dots, x_n) \in \Psiun_f(\sh{M} \tensor f^*
 \sh{L}^a)$ is sent by $t$ to $(tx_1 - tx_2, tx_2 - tx_3, \dots, tx_n)$.  Thus, for an element
 of $\on{ker}(1 - t)$, we have $x_{i + 1} = (1 - t^{-1}) x_i$, or:
 \begin{align*}
  x_i = (1 - t^{-1})^{i - 1} x_1 && -t(1 - t^{-1})^a x_1 = (1 - t)x_n = 0.
 \end{align*}
 If we define a map $\map{u}{\Psiun_f(\sh{M})}{\Psiun_f(\sh{M} \tensor f^* \sh{L}^a)}$ by
 sending the element $x = x_1$ to the coordinates $x_i$ defined by the first formula above, then
 $u$ is injective and its image contains $\on{ker}(1 - t)$ (namely, that subspace
 satisfying the second equation).  Since $1 - t$ (hence $1 - t^{-1}$) is nilpotent on
 $\Psiun_f(\sh{M})$, for $a$ sufficiently large, $\on{im}(u) = \on{ker}(1 - t)$.  We claim that $u$
 intertwines the actions of $t^{-1}$ and $J^a$:
 \begin{multline*}
  J^a u(x)
   = (x - (1 - t^{-1})x, (1 - t^{-1})x + (1 - t^{-1})^2 x, \dots) \\
   = (t^{-1} x, (1 - t^{-1})t^{-1}x, \dots)
   = u(t^{-1} x).
 \end{multline*}
 Finally, we employ the isomorphism $\sh{L}^a \cong (\sh{L}^a)^{-1}$ of \ref{jordan block dual} to
 give an automorphism of $\on{ker}(\alpha^a) \subset j_!(\sh{M} \tensor f^* \sh{L}^a)$ intertwining
 $J^a$ and $(J^a)^{-1}$.
\end{proof}

\begin{theorem}{cor}{uniformity}
 There exists an integer $N$ such that $(1 - t)^N$ annihilates both $\on{ker} \alpha^a$ and
 $\on{coker} \alpha^a$ for all $a$.
\end{theorem}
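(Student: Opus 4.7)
My plan is to transfer a single nilpotency bound from the fixed perverse sheaf $\Psiun_f(\sh{M})$ to the family $\{\on{ker}\alpha^a\}_a$ using the inclusion furnished by \ref{nearby cycles construction}, and then to handle the cokernels by Verdier duality.

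First I would observe that $\Psiun_f(\sh{M})$ is a perverse sheaf by \ref{nearby cycles are perverse} on which $1 - t$ acts nilpotently by the definition of the unipotent part (\ref{unipotent nearby cycles}); since perverse sheaves have finite length, there exists a single integer $N_0$ with $(1 - t)^{N_0} = 0$ on all of $\Psiun_f(\sh{M})$. Because the proposition \ref{nearby cycles construction} gives a $\pi_1(\Gm)$-equivariant inclusion $\on{ker}\alpha^a \hookrightarrow \Psiun_f(\sh{M})$ for every $a$, the same $(1 - t)^{N_0}$ automatically annihilates $\on{ker}\alpha^a$ for all $a$.

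For the cokernel I would invoke Verdier duality. By \ref{jordan block dual} we have $\DD(\sh{M} \tensor f^* \sh{L}^a) \cong \DD\sh{M} \tensor f^* \sh{L}^a$, and since $\DD$ exchanges $j_!$ and $j_*$, the map $\DD\alpha^a$ for $\sh{M}$ is identified with $\alpha^a$ for $\DD\sh{M}$. Consequently $\on{coker}\alpha^a$ applied to $\sh{M}$ is the Verdier dual of $\on{ker}\alpha^a$ applied to $\DD\sh{M}$. Applying the previous paragraph with $\DD\sh{M}$ in place of $\sh{M}$ yields an $N_0'$ with $(1 - t)^{N_0'}$ annihilating $\on{ker}\alpha^a(\DD\sh{M})$, and hence its Verdier dual $\on{coker}\alpha^a(\sh{M})$, uniformly in $a$; the potential replacement of $t$ by $t^{-1}$ under duality is harmless, since the identity $1 - t^{-1} = -t^{-1}(1 - t)$ implies that $(1-t)^N$ and $(1-t^{-1})^N$ have the same annihilator.

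Taking $N = \max(N_0, N_0')$ then gives the statement. There is no substantive obstacle here: by \ref{nearby cycles construction} all the kernels embed into one fixed perverse sheaf regardless of $a$, so uniformity is essentially built into the construction, and the cokernel is handled by Verdier-dualizing the kernel argument. The only minor bookkeeping is to verify that the $\pi_1(\Gm)$-equivariance really passes through the identifications of \ref{jordan block dual} and \ref{nearby cycles construction}, which it does because those identifications were explicitly built from the monodromy-compatible isomorphism $\sh{L}^a \cong (\sh{L}^a)^{-1}$.
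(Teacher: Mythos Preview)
Your proposal is correct and follows essentially the same route as the paper: use the equivariant inclusion $\on{ker}\alpha^a \hookrightarrow \Psiun_f(\sh{M})$ from \ref{nearby cycles construction} to bound the kernels uniformly, then dualize via $\DD(\alpha^a_{\sh{M}}) = \alpha^a_{\DD\sh{M}}$ to handle the cokernels. Your extra remarks on finite length and on the $t\leftrightarrow t^{-1}$ issue under duality are reasonable elaborations of points the paper leaves implicit.
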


\begin{proof}
 By \ref{nearby cycles construction}, the kernel is contained in $\Psiun_f(\sh{M})$ and thus
 annihilated by that power of $1 - t$ which annihilates the nearby cycles.  Temporarily let
 $\alpha^a = \alpha^a_{\sh{M}}$; then $\DD(\alpha^a_{\sh{M}}) = \alpha^a_{\DD\sh{M}}$, so
 $\on{coker}(\alpha^a_{\sh{M}}) = \DD\on{ker}(\alpha^a_{\DD\sh{M}})$ is again annihilated by some
 $(1 - t)^N$.
\end{proof}

In preparation for the next section, we give a generalization of this construction.  For each $a, b
\geq 0$ there is a natural short exact sequence
\begin{equation*}
 0 \to \sh{L}^a \xrightarrow{g^{a,b}} \sh{L}^{a + b} \xrightarrow{g^{a + b, -a}} \sh{L}^b \to 0;
\end{equation*}
that is, for any $r \in \Z$, $g^{a, r}$ sends $\sh{L}^a$ to the first $a$ coordinates of $\sh{L}^{a
+ r}$ if $r \geq 0$, and to the quotient $\sh{L}^{a - (-r)}$ given by collapsing the first $-r$
coordinates if $-r \geq 0$ (that is, $r \leq 0$) and $a + r \geq 0$.  This sequence respects the
action of $\pi_1(\Gm)$ on the terms and, via \ref{jordan block dual}, the $(a,b)$ sequence is dual
to the $(b, a)$ sequence.

Let $\sh{M} \in \cat{M}(U)$; then we have induced maps on the tensor products:
\begin{equation*}
 \map{g^{a,r}_{\sh{M}} = \id \tensor g^{a,r}}
     {\sh{M} \tensor f^* \sh{L}^a}{\sh{M} \tensor f^* \sh{L}^{a + r}}
\end{equation*}
(we will often omit the subscript $\sh{M}$ when no confusion is possible).  By \ref{jordan block
dual}, these satisfy
\begin{equation}
 \label{eq:g duality}
 \DD g^{a,r}_{\sh{M}} = g^{a + r, -r}_{\DD \sh{M}}.
\end{equation}
Note that since the $\sh{L}^a$ are locally free, the $g^{a,r}_{\sh{M}}$ are all injective when $r
\geq 0$ and surjective when $r \leq 0$.  Let $r \in \Z$ and set 
\begin{equation*}
 \alpha^{a,r}
  = j_*(g^{a,r}) \circ \alpha^a
  = \alpha^{a + r} \circ j_!(g^{a,r})
 \colon
 j_!(\sh{M} \tensor f^* \sh{L}^a)
  \to j_*(\sh{M} \tensor f^* \sh{L}^{a + r}).
\end{equation*}
We will use the following self-evident properties of the $g^{a,r}$:

\begin{theorem}{lem}{(a,r) factorization}
 The $g^{a,r}$ satisfy:
 \begin{enumerate}
  \item \label{en:alternating}
  When $a + r \geq 0$, we have $g^{a, r} \circ g^{a + r, -r} = (1 - t)^{\abs{r}}$.

  \item \label{en:transitivity}
  When $r$ and $s$ have the same sign and $a + r + s \geq 0$, we have $g^{a, r + s} = g^{a + r, s}
  \circ g^{a, r}$.

  \item \label{en:images}
  Let $r \geq 0$, $a \geq r$; then we have:
  \begin{align*}
   \on{ker}(1 - t)^r =
   \on{ker}(g^{a,-r}_{\sh{M}}) \cong
   \sh{M} \tensor f^* \sh{L}^r,
   &&
   \on{im}(1 - t)^r =
   \on{im}(g^{a - r,r}_{\sh{M}}) \cong
   \sh{M} \tensor f^* \sh{L}^{a - r}.
  \end{align*}
 \end{enumerate}
 Finally, by \ref{uniformity} and (\plainref*{en:images}), for $r \geq 0$, $(1 - t)^{N + r}$
 annihilates $\on{ker}(\alpha^{a,-r})$ and $\on{coker}(\alpha^{a,r})$. \qed
\end{theorem}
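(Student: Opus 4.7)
The plan is to verify each property by direct calculation in an explicit basis for $\sh{L}^n$. Fixing basis vectors $e_1, \dots, e_n$ for which the monodromy acts as $(1 - t)e_j = e_{j-1}$ (interpreting $e_0 = 0$), the description preceding the lemma becomes: for $r \geq 0$, $g^{a, r}$ is the inclusion $e_j \mapsto e_j$ of $\sh{L}^a$ into the first $a$ coordinates of $\sh{L}^{a+r}$, while for $r \leq 0$ it is the quotient $e_j \mapsto \bar e_j$ of $\sh{L}^a$ modulo $e_1, \dots, e_{-r}$. The lemma is flagged ``self-evident,'' and indeed no step is deep; the only obstacle, such as it is, is not losing track of index shifts.

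For part (\plainref*{en:alternating}) I split on the sign of $r$. When $r \geq 0$, the map $g^{a+r, -r}$ sends $e_j \in \sh{L}^{a+r}$ to $\bar e_j \in \sh{L}^a$, which is zero for $j \leq r$ and otherwise corresponds to $e_{j-r}$ under the identification of the quotient with $\sh{L}^a$; then $g^{a, r}$ includes $e_{j-r}$ back into $\sh{L}^{a+r}$, matching $(1-t)^r e_j = e_{j - r}$. The case $r \leq 0$ is symmetric. Part (\plainref*{en:transitivity}) is then immediate once the basis pictures are in place: composing two first-coordinate inclusions (or two first-coordinate quotients) yields a first-coordinate inclusion (resp.\ quotient) of the combined degree, which is exactly $g^{a, r + s}$.

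Part (\plainref*{en:images}) I would deduce from (\plainref*{en:alternating}) by substituting $a - r$ for $a$, giving $g^{a-r, r} \circ g^{a, -r} = (1-t)^r$ as endomorphisms of $\sh{L}^a$. Since $g^{a - r, r}$ is injective and $g^{a, -r}$ is surjective, this identifies $\ker(g^{a, -r}) = \ker(1-t)^r$ (the span of $e_1, \dots, e_r$, identified with $\sh{L}^r$ via $g^{r, a - r}$) and $\on{im}(g^{a-r, r}) = \on{im}(1-t)^r$ (identified with $\sh{L}^{a-r}$). Tensoring with $\sh{M}$ preserves all of this because $f^* \sh{L}^n$ is locally free.

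For the concluding bound, tensor the sequence of (\plainref*{en:images}) with $\sh{M}$ and apply the exact functor $j_!$ (exact on perverse sheaves because $j$ is affine), producing a short exact sequence
\begin{equation*}
 0 \to j_!(\sh{M} \tensor f^* \sh{L}^r) \to j_!(\sh{M} \tensor f^* \sh{L}^a)
    \xrightarrow{j_!(g^{a, -r})} j_!(\sh{M} \tensor f^* \sh{L}^{a-r}) \to 0;
\end{equation*}
combining this with the factorization $\alpha^{a, -r} = \alpha^{a - r} \circ j_!(g^{a, -r})$ and the surjectivity of $j_!(g^{a, -r})$ yields the $t$-equivariant short exact sequence
\begin{equation*}
 0 \to j_!(\sh{M} \tensor f^* \sh{L}^r) \to \ker \alpha^{a, -r} \to \ker \alpha^{a-r} \to 0.
\end{equation*}
On the sub, $(1-t)^r$ vanishes by (\plainref*{en:images}); on the quotient, $(1-t)^N$ vanishes by \ref{uniformity}; hence $(1-t)^{N + r}$ kills the middle term. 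The statement for $\on{coker} \alpha^{a, r}$ follows by Verdier duality: (\ref{eq:g duality}) together with \ref{jordan block dual} give $\DD \alpha^{a, r}_{\sh{M}} = \alpha^{a + r, -r}_{\DD \sh{M}}$, so $\on{coker} \alpha^{a, r}_{\sh{M}} \cong \DD \ker \alpha^{a + r, -r}_{\DD \sh{M}}$, to which the kernel bound just proved applies with $\sh{M}$ replaced by $\DD \sh{M}$ (the uniform $N$ of \ref{uniformity} already accommodating both sides).
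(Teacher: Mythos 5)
Your proof is correct, and the paper itself offers no argument at all for this lemma: it is stated with a terminal $\qed$, with the three enumerated items called ``self-evident'' in the preceding sentence and the final bound asserted to follow from \ref{uniformity} and item~(\plainref*{en:images}). Your explicit-basis verification of (\plainref*{en:alternating})--(\plainref*{en:images}), and your short exact sequence
\begin{equation*}
 0 \to j_!(\sh{M} \tensor f^* \sh{L}^r) \to \ker \alpha^{a, -r} \to \ker \alpha^{a-r} \to 0
\end{equation*}
together with the duality identity $\DD \alpha^{a, r}_{\sh{M}} = \alpha^{a + r, -r}_{\DD \sh{M}}$ to handle the cokernel, are exactly the natural fill-in of what the paper leaves implicit, so there is nothing to contrast.
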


From now on, we will assume $r \geq 0$.

\begin{theorem}{prop}{stable kernel and cokernel}
 For $a \gg 0$, the natural maps $j_!(g^{a, 1})$ and $j_*(g^{a + r,-1})$ respectively
 induce isomorphisms
 \begin{align*}
  \on{ker}(\alpha^{a,-r}) \xrightarrow{\sim} \on{ker}(\alpha^{a + 1, -r}) &&
  \on{coker}(\alpha^{a,r}) \xrightarrow{\sim} \on{coker}(\alpha^{a - 1, r})
 \end{align*}
 and $j_!(g^{a,r})$ and $j_*(g^{a + r,-r})$ induce isomorphisms
 \begin{align*}
  \on{ker}(\alpha^{a,r}) \xrightarrow{\sim} \on{ker}(\alpha^{a + r}) &&
  \on{coker}(\alpha^{a + r}) \xrightarrow{\sim} \on{coker}(\alpha^{a + r,-r})
 \end{align*}
\end{theorem}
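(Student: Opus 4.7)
My plan is to reduce the four statements to just the two kernel statements, proving those by direct manipulations using the composition identity (\plainref*{en:alternating}) together with the uniform nilpotency from the final clause of \ref{(a,r) factorization}. The two cokernel statements are Verdier-dual to the kernel statements: from \ref{eq:g duality}, $\DD g^{a,r}_{\sh{M}} = g^{a+r,-r}_{\DD\sh{M}}$, together with $\DD j_* = j_! \DD$ and the naturality $\DD \alpha^{a,r}_{\sh{M}} = \alpha^{a+r,-r}_{\DD\sh{M}}$ (already used in the proof of \ref{uniformity}), the cokernel statement for $\sh{M}$ is identified with a kernel statement for $\DD\sh{M}$.

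For the two kernel statements, I first check commutativity of the relevant square. For $j_!(g^{a,r})$ comparing $\on{ker}(\alpha^{a,r})$ and $\on{ker}(\alpha^{a+r})$, this is built into the identity $\alpha^{a,r} = \alpha^{a+r} \circ j_!(g^{a,r})$. For $j_!(g^{a,1})$ comparing $\on{ker}(\alpha^{a,-r})$ and $\on{ker}(\alpha^{a+1,-r})$, the right vertical map is $j_*(g^{a-r,1})$, and, after applying the two factorizations of $\alpha^{\bullet,-r}$ and the naturality of $\alpha$, commutativity reduces to the identity $g^{a-r,1} \circ g^{a,-r} = g^{a+1,-r} \circ g^{a,1}$ of maps $\sh{L}^a \to \sh{L}^{a+1-r}$, which is immediate from the description of the $g$'s as inclusions into, and quotients by, initial coordinates. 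In both squares, injectivity on kernels is automatic because $j_!$ is exact (as $j$ is an affine open immersion) and $g^{a,1}$, $g^{a,r}$ are injective for $r \geq 0$.

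The surjectivity argument is then the same in both situations. Given $y$ in the target kernel, I want $y \in \on{im}(j_!(g^{a,s}))$ with $s=r$ or $s=1$. Since the short exact sequence $0 \to \sh{L}^a \xrightarrow{g^{a,s}} \sh{L}^{a+s} \xrightarrow{g^{a+s,-a}} \sh{L}^s \to 0$ remains exact under $j_!(\sh{M}\tensor f^*(\farg))$, this is equivalent to $j_!(g^{a+s,-a})(y)=0$. The identity (\plainref*{en:alternating}) applied with $(a,r)\mapsto(s,a)$ gives $g^{s,a}\circ g^{s+a,-a}=(1-t)^a$, and injectivity of $j_!(g^{s,a})$ reduces the claim to $(1-t)^a y=0$. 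For the third isomorphism, \ref{nearby cycles construction} realizes $\on{ker}(\alpha^{a+r})$ as a subobject of the fixed perverse sheaf $\Psiun_f(\sh{M})$, so $(1-t)^N$ kills it for some $N$ independent of $a$, and any $a \geq N$ works. For the first isomorphism, the last clause of \ref{(a,r) factorization} provides that $(1-t)^{N+r}$ annihilates $\on{ker}(\alpha^{a+1,-r})$ uniformly in $a$, so $a \geq N+r$ works.

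The main hurdle I anticipate is purely bookkeeping: juggling the triples $(a,\pm1)$, $(a,\pm r)$, $(a+s,-a)$ and keeping the directions of all the $g^{a,r}$'s straight. Conceptually the entire argument turns on the single identity $g^{s,a}\circ g^{s+a,-a}=(1-t)^a$, which is precisely what converts annihilation by a high power of $1-t$ (an intrinsic property of $\Psiun_f$) into membership in the image of $g^{a,s}$ (a structural property of the $\alpha$'s).
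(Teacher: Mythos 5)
Your proof is correct and follows the paper's argument closely on three of the four pieces: the reduction of the cokernel statements to the kernel statements by Verdier duality (via $\DD\alpha^{a,r}_{\sh{M}} = \alpha^{a+r,-r}_{\DD\sh{M}}$) is exactly what the paper does, and your treatment of the second kernel isomorphism — reduce membership in $\on{im}(j_!(g^{a,r}))$ to annihilation by $(1-t)^a$, then use that $\on{ker}(\alpha^{a+r}) \subset \Psiun_f(\sh{M})$ is killed by a uniform $(1-t)^N$ — is the paper's argument with the use of \ref{(a,r) factorization} made explicit. Where you genuinely diverge is on the first kernel isomorphism. The paper establishes the chain of inclusions $\on{ker}\alpha^{a,-r} \subset \on{ker}\alpha^{a+1,-r} \subset \cdots$, observes that the whole chain sits inside the single perverse sheaf $j_!(\sh{M}\tensor f^*\sh{L}^{N+r})$ (via \ref{(a,r) factorization}(\plainref*{en:images})), and invokes noetherianity of $\cat{M}(X)$ to conclude \emph{eventual} stabilization. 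You instead run the same surjectivity mechanism used for the second kernel statement: reduce membership in $\on{im}(j_!(g^{a,1}))$ to $(1-t)^a y = 0$ via the identity $g^{1,a}\circ g^{1+a,-a} = (1-t)^a$ and injectivity of $j_!(g^{1,a})$, and then invoke the uniform bound $(1-t)^{N+r}\on{ker}(\alpha^{a+1,-r}) = 0$. This dispenses with the appeal to noetherianity, unifies the two kernel arguments into a single pattern, and produces an explicit threshold $a\geq N+r$ rather than merely asserting stabilization. Both arguments rest on the same inputs (the $g$-identities and the uniform nilpotency from \ref{uniformity}), so the gain is modest, but your version is a bit more self-contained and quantitative — a reasonable improvement.
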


\begin{proof}
 Using the maps $j_!(g^{a,1})$ and $j_*(g^{a - r, 1})$ we get a square which, using \ref{(a,r)
 factorization}(\plainref{en:alternating},\plainref{en:transitivity}), we verify is commutative:
 \begin{equation*}
  \xymatrix@C+2em{
   j_!(\sh{M} \tensor f^* \sh{L}^a)
    & j_* (\sh{M} \tensor f^* \sh{L}^{a - r}) \\
   j_!(\sh{M} \tensor f^* \sh{L}^{a + 1})
    & j_* (\sh{M} \tensor f^* \sh{L}^{a - r + 1})
   \ar"1,1";"1,2"^{\alpha^{a,-r}}
   \ar"1,2";"2,2"^{j_*(g^{a - r, 1})}
   \ar"1,1";"2,1"_{j_!(g^{a,1})}
   \ar"2,1";"2,2"^{\alpha^{a + 1, -r}}
  }
 \end{equation*}
 showing that $j_!(g^{a,1})$ induces a map on kernels.  Since it is injective, we get a long
 sequence of inclusions of kernels:
 \begin{equation*}
  \cdots \subset \on{ker} \alpha^{a - 1,-r} \subset \on{ker} \alpha^{a,-r}
         \subset \on{ker} \alpha^{a + 1,-r} \subset \cdots.
 \end{equation*}
 By \ref{(a,r) factorization}, each kernel is annihilated by $(1 - t)^{N + r}$,
 whose kernel is (for $a \geq N + r$) the perverse sheaf $j_! (\sh{M} \tensor f^* \sh{L}^{N +
 r})$; thus, this sequence is contained in this sheaf.  Since perverse sheaves are noetherian,
 this chain must have a maximum, so the kernels stabilize.  For the cokernels, we apply \ref{eq:g
 duality} to the argument of \ref{uniformity}.  (One can also argue directly using the artinian
 property of perverse sheaves.)

 For the second statement concerning kernels, since $(1 - t)^N$ annihilates
 $\on{ker}(\alpha^{a + r})$, for $a \geq N$ it is contained in $\on{im}(g^{a,r})$, and therefore by
 definition in $\on{ker}(\alpha^{a,r})$.  The statement on cokernels is again obtained by
 dualization and \ref{eq:g duality}.  (A direct argument employing a diagram chase is also
 possible, using the fact that $(1 - t)^N \on{coker}(\alpha^{a + r}) = 0$.)
\end{proof}

Departing slightly from Beilinson's notation, we denote these stable kernels and cokernels $\on{ker}
\alpha^{\infty, -r}$ and $\on{coker} \alpha^{\infty, r}$ for $r \geq 0$; when $r = 0$ we drop it.

\begin{theorem}{prop}{kernel equals cokernel}
 There is a natural isomorphism $\on{ker} \alpha^{\infty, -r} \xrightarrow{\sim} \on{coker}
 \alpha^{\infty, r}$.
\end{theorem}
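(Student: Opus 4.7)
The plan is to realize both sides as the kernel and cokernel of a single morphism. For $a \gg 0$ set
\begin{equation*}
 h = (1-t)^r \alpha^a \colon j_!(\sh{M} \otimes f^* \sh{L}^a) \to j_*(\sh{M} \otimes f^* \sh{L}^a).
\end{equation*}
Using \ref{(a,r) factorization}(\plainref{en:alternating}), $h$ admits two factorizations,
\begin{equation*}
 h = j_*(g^{a-r,r}) \circ \alpha^{a,-r} = \alpha^{a-r,r} \circ j_!(g^{a,-r}).
\end{equation*}
Because $j_*(g^{a-r,r})$ is injective and $j_!(g^{a,-r})$ is surjective (the latter by \ref{(a,r) factorization}(\plainref{en:images})), this identifies $\ker h = \ker \alpha^{a,-r}$ and $\coker h = \coker \alpha^{a-r,r}$, which by \ref{stable kernel and cokernel} stabilize to $\ker \alpha^{\infty,-r}$ and $\coker \alpha^{\infty,r}$ respectively. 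Thus the two perverse sheaves in the statement are, up to stability, the kernel and cokernel of a \emph{single} morphism $h$.

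From the two factorizations I extract short exact sequences for $\ker h$ and $\coker h$, using \ref{(a,r) factorization}(\plainref{en:images}), \ref{stable kernel and cokernel}, and \ref{nearby cycles construction}:
\begin{equation*}
 0 \to j_!(\sh{M} \otimes f^* \sh{L}^r) \to \ker h \to \ker \alpha^a \to 0,
\end{equation*}
\begin{equation*}
 0 \to \coker \alpha^a \to \coker h \to j_*(\sh{M} \otimes f^* \sh{L}^r) \to 0.
\end{equation*}
A separate snake-lemma argument applied to the $3 \times 3$ diagram with top row $0 \to j_!(\sh{M} \otimes f^* \sh{L}^a) \to j_!(\sh{M} \otimes f^* \sh{L}^{a+r}) \to j_!(\sh{M} \otimes f^* \sh{L}^r) \to 0$, bottom row the matching short exact sequence on $j_*$, and middle column $\alpha^{a+r}$, produces the parallel extension sequences for $\coker \alpha^{\infty, r}$ and $\ker \alpha^{\infty, -r}$ from a common source, allowing the extension classes to be compared.

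The isomorphism of the proposition is then the connecting morphism of the snake lemma applied to the $3 \times 3$ diagram that exhibits both factorizations of $h$ simultaneously. The main obstacle is that this map cannot be defined by the most obvious direct construction: sending $x \in \ker \alpha^{a,-r}$ to the class of $\alpha^a(x)$ (or of $\alpha^{a+r}$ applied to a lift) in $\coker \alpha^{a,r}$ vanishes for $a$ large, because stability forces $\ker \alpha^{a+r,-r} \subseteq j_!(g^{a,r})(j_!(\sh{M} \otimes f^* \sh{L}^a))$, so the apparent image automatically lies in $\im \alpha^{a,r}$. The snake-lemma connecting morphism circumvents this collapse and supplies the correct natural map; one then verifies bijectivity by reading it off against the two short exact sequences above, using the stability isomorphisms of \ref{stable kernel and cokernel} to see that the induced maps on the subobject $j_!(\sh{M} \otimes f^* \sh{L}^r)$ and on the appropriate quotient are isomorphisms, whence so is the middle map by the five lemma.
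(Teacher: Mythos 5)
Your starting observation is a correct and rather elegant repackaging of the setup: with $h=(1-t)^r\alpha^a$, the two factorizations $h=j_*(g^{a-r,r})\circ\alpha^{a,-r}=\alpha^{a-r,r}\circ j_!(g^{a,-r})$ do identify $\on{ker}h$ with $\on{ker}\alpha^{a,-r}$ and $\on{coker}h$ with $\on{coker}\alpha^{a-r,r}$, which for $a\gg 0$ are the two stable objects. You also correctly flag that the kernel and cokernel of a single morphism are not automatically isomorphic, and that the isomorphism should be a snake-lemma connecting morphism rather than any direct composite. All of that agrees with the spirit of the paper, which also produces the isomorphism as a connecting map.

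The gap is that you never pin down a $3\times 3$ diagram whose connecting morphism is actually the isomorphism you want. The one diagram you do spell out — top row $j_!$ of $0\to\sh{L}^a\to\sh{L}^{a+r}\to\sh{L}^r\to 0$, bottom row the ``matching'' $j_*$ sequence, middle column $\alpha^{a+r}$ — has only the single free parameter $a$, and its outer columns are (depending on which matching you take) $\alpha^a$ and $\alpha^r$, or $\alpha^{a,r-a}$ and $\alpha^{r,a-r}$. In either case the connecting morphism is a map $\on{ker}\alpha^r\to\on{coker}\alpha^a$ (or similar), whose source involves the rank-$r$ local system with $r$ fixed and small; this is not $\on{ker}\alpha^{\infty,-r}$, and the six-term exact sequence does not collapse because only one of the two flanking maps can be made an isomorphism by taking $a$ large. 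Your second candidate, ``the $3\times 3$ diagram that exhibits both factorizations of $h$ simultaneously,'' is a $2\times 2$ square, and I don't see a canonical way to complete it to a $3\times 3$ snake-lemma setup; as written it is not a construction.

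What you are missing is the second independent parameter. The paper's diagram has top row $j_!$ of the $(a,b)$-splitting $0\to\sh{L}^a\to\sh{L}^{a+b}\to\sh{L}^b\to 0$ and bottom row $j_*$ of the \emph{different} $(a+r,\,b-r)$-splitting $0\to\sh{L}^{a+r}\to\sh{L}^{a+b}\to\sh{L}^{b-r}\to 0$, with vertical maps $\alpha^{a,r}$, $\alpha^{a+b}$, $\alpha^{b,-r}$. The snake sequence is then
\begin{equation*}
0\to\on{ker}\alpha^{a,r}\to\on{ker}\alpha^{a+b}\to\on{ker}\alpha^{b,-r}
\xrightarrow{\gamma^{a,b;r}}\on{coker}\alpha^{a,r}\to\on{coker}\alpha^{a+b}\to\on{coker}\alpha^{b,-r}\to 0,
\end{equation*}
and \emph{both} outer maps are isomorphisms for $a,b\gg 0$ by the second part of Proposition \ref{stable kernel and cokernel}, because $a$ controls the first map and $b$ controls the last independently. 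Exactness then forces $\gamma^{a,b;r}$ to be the desired isomorphism, with source $\on{ker}\alpha^{b,-r}=\on{ker}\alpha^{\infty,-r}$ and target $\on{coker}\alpha^{a,r}=\on{coker}\alpha^{\infty,r}$. With a single parameter this pincer is unavailable, which is exactly why your sequences only yield ``parallel extension sequences'' rather than a map; comparing extension classes does not, by itself, manufacture a natural isomorphism of the middle terms. To repair the argument, introduce $b$ as a second large parameter and use the two-decomposition diagram above.
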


\begin{proof}
 Consider the map of short exact sequences for any $a$ and any $b \geq r$ (to eliminate clutter we
 have not written the superscripts on the maps $g$):
 \begin{equation*}
  \xymatrix{
   0
    & {j_!(\sh{M} \tensor f^* \sh{L}^a)}
    & {j_!(\sh{M} \tensor f^* \sh{L}^{a + b})}
    & {j_!(\sh{M} \tensor f^* \sh{L}^b)}
    & 0 \\
   0
    & {j_*(\sh{M} \tensor f^* \sh{L}^{a + r})}
    & {j_*(\sh{M} \tensor f^* \sh{L}^{a + b})}
    & {j_*(\sh{M} \tensor f^* \sh{L}^{b - r})}
    & 0
   \ar"1,1";"1,2"
   \ar"1,2";"1,3"^-{j_!(g)}
   \ar"1,3";"1,4"^-{j_!(g)}
   \ar"1,4";"1,5"
   \ar"2,1";"2,2"
   \ar"2,2";"2,3"^-{j_*(g)}
   \ar"2,3";"2,4"^-{j_*(g)}
   \ar"2,4";"2,5"
   \ar"1,2";"2,2"^{\alpha^{a,r}}
   \ar"1,3";"2,3"^{\alpha^{a + b}}
   \ar"1,4";"2,4"^{\alpha^{b, -r}}
  }
 \end{equation*}
 By the snake lemma, we have an exact sequence of kernels and cokernels:
 \begin{multline}
  \label{eq:snake sequence}
  0 \to \on{ker}(\alpha^{a,r}) \to \on{ker}(\alpha^{a + b}) \to \on{ker}(\alpha^{b, -r})
   \xrightarrow{\gamma^{a,b;r}} \on{coker}(\alpha^{a,r}) \\
   \to \on{coker}(\alpha^{a + b}) \to \on{coker}(\alpha^{b, -r}) \to 0.
 \end{multline}
 If $a, b \gg 0$, then the first and last maps are, by the second part of \ref{stable kernel and
 cokernel}, isomorphisms.  Therefore $\gamma^{a,b;r}$ is an isomorphism.  Since the long exact
 sequence of cohomology \ref*{eq:snake sequence} is natural, we see that $\gamma^{a,b;r}$ is
 independent of $a$ and $b$ in the sense of the proposition:
 \begin{align*}
  \gamma^{a,b + 1;r} \circ j_!(g^{b,1}) = \gamma^{a,b;r}
  &&
  j_*(g^{a + 1,-1}) \circ \gamma^{a + 1,b;r} = \gamma^{a,b;r}
 \end{align*}
 where the requisite commutative diagrams are produced using \ref{(a,r)
 factorization}(\plainref{en:alternating},\plainref{en:transitivity}).  For the same reason,
 $\gamma^{a,b;r}$ is a natural transformation between the two functors
 \begin{equation*}
  \sh{M} \mapsto \on{ker}(\alpha^{b,-r}_{\sh{M}}),
  \qquad
  \sh{M} \mapsto \on{coker}(\alpha^{a,r}_{\sh{M}}),
 \qedhere
 \end{equation*}
\end{proof}

Because they are equal, we will give a single name $\Pi^r_f(\sh{M}) = \on{ker}(\alpha^{\infty, -r})
\cong \on{coker}(\alpha^{\infty, r})$ to the stable kernel and cokernel.  These are
thus exact functors, and by definition of $\alpha^{a,r}$ and \ref{eq:g duality}, they commute with
duality: $\DD \Pi_f^r(\sh{M}) \cong \Pi_f^r(\DD \sh{M})$.  From \ref{nearby cycles construction} we
conclude:

\begin{theorem}{cor}{nearby cycles computations}
 For $a \gg 0$ we have $\on{ker}(\alpha^a) \cong \Psiun_f(\sh{M}) \cong
 \on{coker}(\alpha^a)$, and thus an isomorphism \begin{equation*}
  \DD \Psiun_f(\sh{M}) \cong \Psiun_f(\DD\sh{M})
 \end{equation*}
 which is natural in the perverse sheaf $\sh{M}$.  A more effective, equivalent construction is
 obtained as follows: suppose $(1 - t)^N$ annihilates $\Psiun_f(\sh{M})$.  Then we have by
 \ref{eq:kernel and cokernel}:
 \begin{equation*}
  \Psiun_f(\sh{M})
    = i^* j_{!*} (\sh{M} \tensor f^* \sh{L}^N)[-1]
    = i^! j_{!*} (\sh{M} \tensor f^* \sh{L}^N)[1].
 \end{equation*}
 Conversely, if these equations hold, then of course $(1 - t)^N$ annihilates $\Psiun_f(\sh{M})$.
 \qed
\end{theorem}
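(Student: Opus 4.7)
The first claim is essentially an assembly step. Proposition \ref{nearby cycles construction} already supplies an isomorphism $\on{ker}(\alpha^a) \cong \Psiun_f(\sh{M})$ once $a$ is large enough that $(1-t)^a$ annihilates $\Psiun_f(\sh{M})$. Taking $r=0$ in Proposition \ref{kernel equals cokernel} then matches this stable kernel with the stable cokernel $\on{coker}(\alpha^a)$, which settles $\on{ker}(\alpha^a) \cong \Psiun_f(\sh{M}) \cong \on{coker}(\alpha^a)$. The duality identification $\DD\Psiun_f(\sh{M}) \cong \Psiun_f(\DD\sh{M})$ is then an immediate corollary: the remark preceding the statement notes that $\Pi_f^0 = \Psiun_f$ commutes with $\DD$ because the duality \ref{eq:g duality} interchanges $\alpha^a$ with itself up to replacing $\sh{M}$ by $\DD\sh{M}$, so $\DD$ exchanges the stable kernel and stable cokernel of $\alpha^a$. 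Naturality in $\sh{M}$ is inherited from naturality of $\alpha^a$ in $\sh{M}$.

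For the effective construction, I would feed the perverse sheaf $\sh{N} = \sh{M} \tensor f^* \sh{L}^N$ (perverse because $\sh{L}^N$ is locally constant) into the affine-open-immersion sequence \ref{eq:kernel and cokernel}. That sequence identifies
\begin{equation*}
 i^* j_{!*}(\sh{N})[-1] = \on{ker}(j_!\sh{N} \to j_*\sh{N}) = \on{ker}(\alpha^N),
\quad
 i^! j_{!*}(\sh{N})[1] = \on{coker}(\alpha^N).
\end{equation*}
The refined version of Proposition \ref{nearby cycles construction} says that $(1-t)^N$ annihilating $\Psiun_f(\sh{M})$ is already enough for $\on{ker}(\alpha^N) \cong \Psiun_f(\sh{M})$, and then the first half of the corollary upgrades this to the cokernel side as well. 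This gives both displayed equalities in one stroke.

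The converse is where one has to look inside the earlier proof rather than just quote it. The explicit map $u\colon \Psiun_f(\sh{M}) \to \Psiun_f(\sh{M} \tensor f^* \sh{L}^N)$ constructed in \ref{nearby cycles construction} is always injective, and its image always \emph{contains} $\on{ker}(1-t) = \on{ker}(\alpha^N)$; the two coincide exactly when the constraint $-t(1-t^{-1})^N x_1 = 0$ is automatic for every $x_1 \in \Psiun_f(\sh{M})$. That constraint rewrites as $(1-t)^N x_1 = 0$. Thus if $\Psiun_f(\sh{M}) = i^* j_{!*}(\sh{N})[-1]$, comparing dimensions (in the appropriate perverse sense) through $u$ forces $(1-t)^N$ to annihilate $\Psiun_f(\sh{M})$.

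The only real obstacle is the bookkeeping: one must check that the three thresholds floating around — the stabilization index of $\on{ker}(\alpha^{a,-r})$ and $\on{coker}(\alpha^{a,r})$ from \ref{stable kernel and cokernel}, the isomorphism index $\on{ker}(\alpha^a) \cong \Psiun_f(\sh{M})$ from \ref{nearby cycles construction}, and the monodromy nilpotence index $N$ — can simultaneously be taken to equal $N$, so that the ``effective'' form really uses only the power that kills the unipotent monodromy. This is already recorded in the parenthetical strengthening at the end of \ref{nearby cycles construction}, and is what makes the construction genuinely computable rather than merely asymptotic.
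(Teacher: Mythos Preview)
Your assembly is correct and matches the paper's intent (which gives no proof beyond \qed): the first claim is exactly \ref{nearby cycles construction} plus \ref{kernel equals cokernel} at $r=0$, and the duality statement is the specialization $\Pi^0_f = \Psiun_f$ of the remark preceding the corollary.

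One imprecision in your bookkeeping paragraph: the parenthetical in \ref{nearby cycles construction} only guarantees $\on{ker}(\alpha^N) \cong \Psiun_f(\sh{M})$ at the nilpotency index $N$; it says nothing about the cokernel.  To get $\on{coker}(\alpha^N) \cong \Psiun_f(\sh{M})$ at the \emph{same} $N$ you must invoke the duality you just proved: since $\DD\Psiun_f(\sh{M}) \cong \Psiun_f(\DD\sh{M})$ equivariantly, $(1-t)$ has the same nilpotency index on $\Psiun_f(\DD\sh{M})$, so \ref{nearby cycles construction} applied to $\DD\sh{M}$ gives $\on{ker}(\alpha^N_{\DD\sh{M}}) = \Psiun_f(\DD\sh{M})$, and dualizing via $\DD(\alpha^N_{\sh{M}}) = \alpha^N_{\DD\sh{M}}$ yields $\on{coker}(\alpha^N_{\sh{M}}) = \Psiun_f(\sh{M})$.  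Your converse argument is fine once ``comparing dimensions'' is read as ``comparing lengths in the finite-length category $\cat{M}(Z)$''; alternatively, and more in the spirit of the paper's ``of course'', simply observe that $(1 - J^N)^N = 0$ already kills all of $j_!(\sh{M}\tensor f^*\sh{L}^N)$, hence its subobject $\on{ker}(\alpha^N)$, and the inclusion of \ref{nearby cycles construction} identifies the $\pi_1$-actions.
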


\section{Vanishing cycles and gluing}
\label{sec:gluing}
We will refer to $\Pi_f^1$ as $\Xiun_f$, which Beilinson calls the ``maximal extension functor'' and
denotes without the superscript.  Although there is no independent, non-unipotent analogue, we have
chosen to use this notation to match that for the nearby and (upcoming) vanishing cycles functors,
which do have such analogues.

\begin{theorem}{prop}{xi sequences}
 There are two natural exact sequences exchanged by duality and $\sh{M} \leftrightarrow \DD \sh{M}$:
 \begin{gather*}
  0 \to j_!(\sh{M}) \xrightarrow{\alpha_-} \Xiun_f(\sh{M})
    \xrightarrow{\beta_-} \Psiun_f(\sh{M}) \to 0 \\
  0 \to \Psiun_f(\sh{M}) \xrightarrow{\beta_+} \Xiun_f(\sh{M})
    \xrightarrow{\alpha_+} j_*(\sh{M}) \to 0,
 \end{gather*}
 where $\alpha_+ \circ \alpha_- = \alpha$ and $\beta_- \circ \beta_+ = 1 - t$.
\end{theorem}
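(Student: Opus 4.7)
The plan is to extract each sequence from the factorizations defining $\alpha^{a,\pm 1}$ via a standard snake-lemma observation: for a composition $A \xrightarrow{u} B \xrightarrow{v} C$ in an abelian category, injectivity of $v$ gives a natural exact sequence $0 \to \on{coker} u \to \on{coker}(vu) \to \on{coker} v \to 0$, while surjectivity of $u$ gives $0 \to \on{ker} u \to \on{ker}(vu) \to \on{ker} v \to 0$.

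For the second sequence, apply the first observation to $\alpha^{a,1} = j_*(g^{a,1}) \circ \alpha^a$: here $j_*(g^{a,1})$ is injective because $g^{a,1}$ is and $j_*$ is exact ($j$ being affine). Taking $a$ large, \ref{nearby cycles computations} gives $\on{coker} \alpha^a = \Psiun_f(\sh{M})$; by definition $\on{coker} \alpha^{a,1} = \Xiun_f(\sh{M})$; and the short exact sequence $0 \to \sh{L}^a \to \sh{L}^{a+1} \to \csheaf{\C} \to 0$ tensored with $\sh{M}$ and pushed by $j_*$ gives $\on{coker} j_*(g^{a,1}) = j_*(\sh{M})$. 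This produces the second sequence, with $\beta_+$ induced by $j_*(g^{a,1})$ and $\alpha_+$ by $j_*(g^{a+1,-a})$. Analogously, the second observation applied to $\alpha^{a,-1} = \alpha^{a-1} \circ j_!(g^{a,-1})$, together with $\on{ker} \alpha^{a,-1} = \Xiun_f(\sh{M})$ (from \ref{kernel equals cokernel}), yields the first sequence. Alternatively, the first sequence is Verdier dual to the second applied to $\DD \sh{M}$ by \ref{eq:g duality} and \ref{nearby cycles computations}, simultaneously verifying the claimed duality exchange.

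The two composition identities require tracking the snake-lemma isomorphism $\gamma\colon \on{ker} \alpha^{a,-1} \xrightarrow{\sim} \on{coker} \alpha^{a',1}$ of \ref{kernel equals cokernel}, since $\alpha_-, \beta_-$ arise naturally in the kernel description and $\alpha_+, \beta_+$ in the cokernel one. For $\alpha_+ \circ \alpha_- = \alpha$: for $x \in j_!(\sh{M})$, lift $\alpha_-(x) = j_!(g^{1,a-1})(x) \in \on{ker} \alpha^{a,-1}$ through the snake diagram to the element of $j_!(\sh{M} \tensor f^* \sh{L}^{a+a'})$ with $x$ placed in coordinate $a'+1$; then $\alpha^{a+a'}$ followed by $j_*(g^{a'+1,-a'})$ extracts exactly $\alpha(x)$. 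The identity $\beta_- \circ \beta_+ = 1-t$ is handled analogously, using the explicit iso $\Psiun_f(\sh{M}) \xrightarrow{\sim} \on{ker} \alpha^a$, $x \mapsto ((1-t^{-1})^{i-1} x)_i$, from the proof of \ref{nearby cycles construction}. The main obstacle is the coordinate bookkeeping across the two descriptions of $\Xiun_f$.
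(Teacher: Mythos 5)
Your approach is essentially the paper's: both extract the sequences from the snake lemma applied to the $j_!$--$j_*$ rectangle built from the short exact sequence of local systems $\sh{L}^\bullet$, and both then verify the composition identities by tracking the snake-lemma connecting map $\gamma$. Your packaging of the extraction step is marginally cleaner --- you use the short ``snake lemma for compositions'' (applied separately to the factorizations $\alpha^{a,1} = j_*(g^{a,1})\circ\alpha^a$ and $\alpha^{a,-1} = \alpha^{a-1}\circ j_!(g^{a,-1})$), whereas the paper gets both sequences from the single six-term snake sequence \eqref{eq:snake sequence} by specializing to $b = r$ and to $a = 0$ respectively. These are equivalent and the difference is cosmetic.

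The place where your argument is thinner than the paper's is the composition identities, which is genuinely the delicate part. Your element chase --- ``place $x$ in coordinate $a'+1$ of $\sh{L}^{a+a'}$, apply $\alpha^{a+a'}$, then $j_*(g^{a'+1,-a'})$'' --- is not literally well-formed as a map on $X$, since ``placing $x$ in coordinate $a'+1$'' is not a morphism of local systems (it does not intertwine the monodromy), so this only makes sense fiberwise or over $U$. The paper's key simplification, which you should make explicit, is to restrict everything to $U$: since $\alpha_+\circ\alpha_-$ and $\alpha$ are both maps $j_!\sh{M}\to j_*\sh{M}$, adjunction reduces $\alpha_+\circ\alpha_-=\alpha$ to $(\alpha_+\circ\alpha_-)|_U = \id_{\sh{M}}$, and over $U$ every $\alpha^n$ becomes the identity so $\gamma^{a,b;r}|_U = (g^{a+b,-a}\circ g^{a+r,b-r})^{-1}$ is an honest multivalued inverse; the claims then collapse to identities among the $g^{a,r}$ that follow from \ref{(a,r) factorization}. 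The analogous reduction handles $\beta_-\circ\beta_+=1-t$ (one also needs $\on{Hom}(\Psiun_f\sh{M},\Psiun_f\sh{M})$ to embed in $\on{Hom}$ of the lifts, which the surjectivity and injectivity in your two sequences provide). Your citation of the explicit embedding $\Psiun_f(\sh{M})\xrightarrow{\sim}\on{ker}\alpha^a$ from \ref{nearby cycles construction} is a fine ingredient, but without the restriction-to-$U$ step the bookkeeping you flag as ``the main obstacle'' remains genuinely unresolved rather than merely tedious.
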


\begin{proof}
 These sequences are, respectively, the last and first halves of \ref{eq:snake sequence}.  For the
 first one, take $b = r$, so $g^{b,-r} = 0$ and therefore $\alpha^{b,-r} = 0$; we get an exact
 sequence
 \begin{equation*}
  0 \to \on{ker}(\alpha^{a,r})
    \to \on{ker}(\alpha^{a + r})
    \to j_!(\sh{M} \tensor f^* \sh{L}^r)
    \xrightarrow{\gamma^{a,r;r}}
        \on{coker}(\alpha^{a,r})
    \to \on{coker}(\alpha^{a + r})
    \to 0.
 \end{equation*}
 For $a \gg 0$, by the second part of \ref{stable kernel and cokernel}, the first map is an
 isomorphism, and for $r = 1$ we obtain the first short exact sequence from the remaining three
 terms above.  For the second short exact sequence, we apply the same reasoning to \ref*{eq:snake
 sequence} with $a = 0$ and then $r = 1$, with $b \gg 0$:
 \begin{equation*}
  0 \to \on{ker}(\alpha^b)
    \to \on{ker}(\alpha^{b,-r})
    \xrightarrow{\gamma^{0,b;r}}
        j_*(\sh{M} \tensor f^* \sh{L}^r)
    \to \on{coker}(\alpha^b)
    \to \on{coker}(\alpha^{b,-r})
    \to 0.
 \end{equation*}
 It is obvious from these constructions and \ref{eq:g duality} that the two short exact sequences
 are exchanged by duality.  To show that $\alpha_+ \circ \alpha_- = \id$ and $\beta_- \circ \beta_+
 = 1 - t$, we identify these maps in the above sequences and rewrite the claims as:
 \begin{align*}
  \bigl(\gamma^{0,b;1} \circ (\gamma^{a,b;1})^{-1} \circ \gamma^{a,1;1}\bigr)\bigr|_U = \id,
  &&
  \gamma^{a,b;1}|_{\on{ker}(\alpha^b)} \bmod \on{im}(\alpha^{a + 1}) = (1 - t) \gamma^{a + 1, b;0}.
 \end{align*}
 For both, we use the fact that since $\alpha^a|_U = \id$, we have $\gamma^{a,b;r}|_U = (g^{a +
 b,-a} \circ g^{a + r, b - r})^{-1}$, as constructed in the familiar proof of the snake lemma, with
 the inverse interpreted as a multi-valued pullback.  Then the claims are equivalent to
 \begin{align*}
  g^{a + b, -a} \circ g^{a + 1, b - 1} &= g^{1, b - 1} \circ g^{a + 1, -a}
  \\
  g^{a + b + 1, -a - 1} \circ g^{a + 1, b} &= (1 - t) g^{a + b, -a} g^{a + 1, b - 1}
 \end{align*}
 which follow from \ref{(a,r) factorization}(\plainref{en:alternating},\plainref{en:transitivity}).
\end{proof}

The remainder of the paper is simply what Beilinson calls ``linear algebra'' (one might argue that
this has already been the case for most of the preceding).  Take $\sh{M} = j^* \sh{F}$ for a
perverse sheaf $\sh{F} \in \cat{M}(X)$ in the above exact sequences.  From the maps in these two
 sequences we can form a complex:
\begin{equation}
 \label{eq:phi complex}
  j_! j^* \sh{F} \xrightarrow{(\alpha_-, \gamma_-)}
 \Xiun_f (j^* \sh{F}) \oplus \sh{F} \xrightarrow{(\alpha_+, -\gamma_+)}
 j_* j^* \sh{F},
\end{equation}
where $\map{\gamma_-}{j_!j^*(\sh{F})}{\sh{F}}$ and $\map{\gamma_+}{\sh{F}}{j_*j^* (\sh{F})}$ are
defined by the left- and right-adjunctions $(j_!, j^*)$ and $(j^*, j_*)$ and the property that
$j^*(\gamma_-) = j^*(\gamma_+) = \id$.

\begin{theorem}{prop}{vanishing cycles}
 The complex \ref{eq:phi complex} is in fact a complex; let $\Phiun_f(\sh{F})$ be its cohomology
 sheaf.  Then $\Phiun_f$ is an exact functor $\cat{M}(X) \to \cat{M}(Z)$, and there are maps
 $u,v$ such that $v \circ u = 1 - t$ as in the following diagram:
 \begin{equation*}
  \Psiun_f (j^* \sh{F}) \xrightarrow{u} \Phiun_f(\sh{F}) \xrightarrow{v} \Psiun_f (j^* \sh{F}).
 \end{equation*}
\end{theorem}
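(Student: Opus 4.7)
The plan is to verify each of the four assertions of the proposition in sequence, using \ref{xi sequences} as the principal tool throughout.

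First, for the complex condition, the composition of the two arrows unfolds as $\alpha_+\alpha_- - \gamma_+\gamma_-$ as a map $j_!j^*\sh{F}\to j_*j^*\sh{F}$. By \ref{xi sequences}, $\alpha_+\alpha_-=\alpha$, the canonical adjunction morphism $j_!j^*\sh{F}\to j_*j^*\sh{F}$. The composition $\gamma_+\gamma_-$ is the counit of $(j_!,j^*)$ followed by the unit of $(j^*,j_*)$, which by the universal property of these adjunctions is also the canonical morphism $\alpha$; so the two cancel. Next, all three terms of \ref{eq:phi complex} are perverse sheaves on $X$ (using affineness of $j$ so that $j_!$ and $j_*$ are t-exact, together with the fact that $\Xiun_f$ produces perverse sheaves by construction), so the middle cohomology is automatically perverse. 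To see that it is supported on $Z$, I would apply $j^*$ to \ref{eq:phi complex}: by the third part of \ref{(a,r) factorization} one has $j^*\Xiun_f(j^*\sh{F})\cong j^*\sh{F}$ with $j^*\alpha_\pm=\id$, while $j^*\gamma_\pm=\id$ by construction, so the restricted complex becomes $j^*\sh{F}\xrightarrow{(1,1)}j^*\sh{F}\oplus j^*\sh{F}\xrightarrow{(1,-1)}j^*\sh{F}$, whose middle cohomology vanishes.

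For exactness of $\Phiun_f$, I would first observe that the outer arrows of \ref{eq:phi complex} are respectively injective and surjective, because $\alpha_-$ and $\alpha_+$ already are so by \ref{xi sequences}; hence \ref{eq:phi complex} has cohomology concentrated in its middle term. Each of the functors $\sh{F}\mapsto j_!j^*\sh{F}$, $\sh{F}\mapsto\Xiun_f(j^*\sh{F})\oplus\sh{F}$, and $\sh{F}\mapsto j_*j^*\sh{F}$ is exact on $\cat{M}(X)$, using affineness of $j$ and the exactness of $\Xiun_f$ noted after \ref{kernel equals cokernel}. A short exact sequence in $\cat{M}(X)$ therefore induces a short exact sequence of three-term complexes each having only middle cohomology, and the resulting long exact sequence collapses to the desired short exact sequence $0\to\Phiun_f(\sh{F}')\to\Phiun_f(\sh{F})\to\Phiun_f(\sh{F}'')\to0$.

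Finally, for the maps $u$ and $v$: I would define $u$ as $\beta_+$ composed with the inclusion $\Xiun_f(j^*\sh{F})\incl\Xiun_f(j^*\sh{F})\oplus\sh{F}$ into the first summand; the image lies in $\on{ker}(\alpha_+,-\gamma_+)$ because $\alpha_+\beta_+=0$ by the second sequence of \ref{xi sequences}. Dually, $v$ is induced by the projection to the first summand followed by $\beta_-$; this annihilates $\on{im}(\alpha_-,\gamma_-)$ since $\beta_-\alpha_-=0$ by the first sequence. Then $v\circ u=\beta_-\beta_+=1-t$, which is exactly the last identity recorded in \ref{xi sequences}. The only step that demands care is exactness: one must be comfortable both that $\Xiun_f$ is a genuinely exact endofunctor of $\cat{M}(U)$ and that the chosen three-term presentation of $\Phiun_f$ really is concentrated in its middle position, so that a short exact sequence of presentations transfers cleanly to cohomology; once that is in hand, the remaining verifications are formal.
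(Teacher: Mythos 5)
Your proof follows essentially the same approach as the paper: you verify the complex condition via the adjunction identities $\alpha_+\alpha_- = \alpha = \gamma_+\gamma_-$, establish support on $Z$ by applying $j^*$, deduce exactness from the injectivity/surjectivity of the outer maps plus the exactness of the constituent functors, and define $u$, $v$ by the same coordinate formulas. The only differences are cosmetic (you note explicitly that the middle cohomology is perverse, and you route the identification $j^*\Xiun_f(\sh{M})\cong\sh{M}$ through a reference where the paper leaves it implicit), so the two arguments are mathematically identical.
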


\begin{proof}
 That \ref*{eq:phi complex} is a complex amounts to showing that $\gamma_+ \circ \gamma_- = \alpha
 = \alpha_+ \circ \alpha_-$, which is true by definition of the $\gamma_{\pm}$ and adjunction.  To
 show that $\Phiun_f$ is exact, suppose we have $0 \to \sh{F}_1 \to \sh{F}_2 \to \sh{F}_3 \to 0$, so
 that we get a short exact sequence of complexes
 \begin{equation*}
  0 \to C^\bullet(\sh{F}_1) \to C^\bullet(\sh{F}_2) \to C^\bullet(\sh{F}_3) \to 0,
 \end{equation*}
 where by $C^\bullet(\sh{F})$ we have denoted the complex \ref*{eq:phi complex} padded with zeroes
 on both sides.  Note that since $\alpha_-$ is injective and $\alpha_+$ surjective,
 $C^\bullet(\sh{F})$ fails to be exact only at the middle term.  Therefore we have a long exact
 sequence of cohomology sheaves:
 \begin{equation*}
  \cdots 
  (0 = H^{-1} C^\bullet(\sh{F}_3))
  \to \Phiun_f(\sh{F}_1)
  \to \Phiun_f(\sh{F}_2)
  \to \Phiun_f(\sh{F}_3)
  \to (0 = H^1(C^\bullet(\sh{F}_1)))
  \cdots
 \end{equation*}
 which shows that $\Phiun_f$ is functorial and an exact functor.
 
 If we apply $j^*$ to \ref{eq:phi complex}, it becomes simply (with $j^* \sh{F} = \sh{M}$)
 \begin{equation*}
  \sh{M} \xrightarrow{(\id,\id)} \sh{M} \oplus \sh{M}
      \xrightarrow{(\id,-\id)} \sh{M}
 \end{equation*}
 which is actually exact, so $j^* \Phiun_f(\sh{F}) = 0$; i.e.\ $\Phiun_f(\sh{F})$ is supported on
 $Z$. Finally, to define $u$ and $v$, let $\map{\on{pr}}{\Xiun_f (j^* \sh{F}) \oplus \sh{F}}{\Xiun_f
 (j^* \sh{F})}$, and set $u = (\beta_+, 0)$ in coordinates, and $v = \beta_- \circ \on{pr}$.  Since
 $\beta_- \circ \alpha_- = 0$, $v$ factors through $\Phiun_f(\sh{F})$, and we have $v \circ u =
 \beta_- \circ \beta_+ = 1 - t$ by \ref{xi sequences}.
\end{proof}

Define a \emph{vanishing cycles gluing data} for $f$ to be a quadruple $(\sh{F}_U, \sh{F}_Z, u, v)$
as in \ref{vanishing cycles}; for any $\sh{F} \in \cat{M}(X)$, the quadruple $F_f(\sh{F}) =
(j^* \sh{F}, \Phiun_f(\sh{F}), u, v)$ is such data.  Let $\cat{M}_f(U,Z)$ be the category of gluing
data; then $\map{F_f}{\cat{M}(X)}{\cat{M}_f(U,Z)}$ is a functor.  Conversely, given a vanishing
cycles data
\begin{equation*}
 \Psiun_f(\sh{F}_U) \xrightarrow{u} \sh{F}_Z \xrightarrow{v} \Psiun_f(\sh{F}_U),
\end{equation*}
we can form the complex
\begin{equation}
 \label{eq:inverse gluing diagram}
 \Psiun_f(\sh{F}_U) \xrightarrow{(\beta_+,u)} \Xiun_f(\sh{F}_U) \oplus \sh{F}_Z
 \xrightarrow{(\beta_-,-v)} \Psiun_f(\sh{F}_U)
\end{equation}
since $v \circ u = 1 - t = \beta_- \circ \beta_+$, and let $G_f(\sh{F}_U, \sh{F}_Z, u, v)$ be its
cohomology sheaf.

Beilinson gives an elegant framework for proving the equivalence of \ref{eq:phi complex} and
\ref{eq:inverse gluing diagram} in \cite{this}*{Appendix}.  Rather than proving \ref{gluing}
directly, we present his technique (with slightly modified terminology).

\begin{theorem}{defn}{diads}
 Let a \emph{diad} be a complex of the form
 \begin{equation*}
  D^\bullet = \Bigl(\sh{F}_L \xrightarrow{L\, =\, (a_L, b_L)} \sh{A} \oplus \sh{B}
   \xrightarrow{R\, =\, (a_R, b_R)} \sh{F}_R\Bigr)
 \end{equation*}
 in which $a_L$ is injective and $a_R$ is surjective (so it is exact on the ends).  Let the category
 of diads be denoted $\cat{M}_2$.  Let a \emph{triad} be a short exact sequence of the form
 \begin{equation*}
  S = \Bigl(0 \to \sh{F}_- \xrightarrow{(c_-, d_-^1, d_-^2)} \sh{A} \oplus \sh{B}^1 \oplus \sh{B}^2
    \xrightarrow{(c_+, d_+^1, d_+^2)} \sh{F}_+ \to 0\Bigr)
 \end{equation*}
 in which both $\map{(c_-, d_-^i)}{\sh{F}_-}{\sh{A} \oplus \sh{B}^i}$ are injections and both
 $\map{(c_+, d_+^i)}{\sh{A} \oplus \sh{B}^i}{\sh{F}_+}$ are surjections.  Let the category of triads
 be denoted $\cat{M}_3$; it has a \emph{reflection functor} $\map{r}{\cat{M}_3}{\cat{M}_3}$
 which invokes the natural symmetry $1 \leftrightarrow 2$, and is an involution.
\end{theorem}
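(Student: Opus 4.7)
The statement here is a definition, so the only proof burden is the well-definedness of the reflection functor $r$ and the assertion that it is an involution. The plan is therefore to spell out (i) what $r$ does on objects and morphisms, (ii) check that the triad axioms are preserved, and (iii) observe that swapping twice is the identity.

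On objects I would take a triad
\begin{equation*}
 S = \Bigl(0 \to \sh{F}_- \xrightarrow{(c_-, d_-^1, d_-^2)} \sh{A} \oplus \sh{B}^1 \oplus \sh{B}^2 \xrightarrow{(c_+, d_+^1, d_+^2)} \sh{F}_+ \to 0\Bigr)
\end{equation*}
and define $r(S)$ to be the short exact sequence obtained by leaving $\sh{F}_\pm$, $\sh{A}$, $c_\pm$ fixed, exchanging the summands $\sh{B}^1$ and $\sh{B}^2$, and exchanging the component maps $d_\pm^1 \leftrightarrow d_\pm^2$ accordingly. The resulting sequence is still exact (it is literally the same sequence up to an isomorphism of the middle term permuting two direct summands), and the injectivity/surjectivity requirements on $(c_-, d_-^i)$ and $(c_+, d_+^i)$ are symmetric in the superscripts, so $r(S)$ again lies in $\cat{M}_3$. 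On morphisms I would take a map of triads $\phi = (\phi_-, \phi_{\sh{A}}, \phi_{\sh{B}^1}, \phi_{\sh{B}^2}, \phi_+)$ and send it to the map whose $\sh{B}^1$- and $\sh{B}^2$-components are swapped; compatibility with the two differentials of $r(S)$ is immediate from the corresponding compatibility for $S$. Functoriality (identity and composition) holds componentwise.

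For the involution property, both the object and the morphism operations are given by the transposition $1 \leftrightarrow 2$, which squares to the identity, so $r \circ r = \id_{\cat{M}_3}$ without further work.

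The only real pitfall is notational bookkeeping: one must be careful that $\sh{A}$ and the maps $c_\pm$ stay in place under the swap, so that $r$ really is an endofunctor of $\cat{M}_3$ (and not, say, a functor into some relabelled copy). Beyond this, there is nothing to grind through; the content of the definition is entirely formal once the symmetry of the triad axioms in the two indices is recorded.
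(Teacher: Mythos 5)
Your verification is correct and matches what the paper leaves implicit: since the triad axioms are manifestly symmetric under $1 \leftrightarrow 2$, the swap preserves membership in $\cat{M}_3$, is functorial componentwise, and squares to the identity. The paper states this as a definition without further argument, so there is no divergence to report.
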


We can define a map $\map{T}{\cat{M}_2}{\cat{M}_3}$ by setting
\begin{equation*}
 T(D) = \Bigl(0 \to \on{ker}(R) \xrightarrow{(\iota_A, \iota_B, h)}
                    \sh{A} \oplus \sh{B} \oplus H(D^\bullet) \xrightarrow{(\pi_A, \pi_B, -k)}
                    \on{coker}(L) \to 0 \Bigr),
\end{equation*}
where the natural inclusion/projection (resp. projection/inclusion) are called:
\begin{align*}
 \on{ker}(R)
  \xrightarrow{\iota = (\iota_A, \iota_B)}
 \sh{A} \oplus \sh{B}
  \xrightarrow{\pi = (\pi_A, \pi_B)}
 \on{coker}(L),
 &&
 \on{ker}(R)
  \xrightarrow{h}
 H(D^\bullet)
  \xrightarrow{k}
 \on{coker}(L)
\end{align*}
(note $\pi \circ \iota = k \circ h$).  We define the inverse $T^{-1}$ by the formula
\begin{equation*}
 T^{-1}(S) = \Bigl( \on{ker}(d_-^2) \xrightarrow{(c_-, d_-^1)} \sh{A} \oplus \sh{B}^1 \to
\on{coker}(c_-, d_-^1) \Bigr).
\end{equation*}

\begin{theorem}{lem}{diad-triad equivalence}
 The functors $T, T^{-1}$ are mutually inverse equivalences of $\cat{M}_2$ with $\cat{M}_3$.
\end{theorem}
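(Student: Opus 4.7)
My plan is to verify four things (functoriality in each direction is visible from the formulas): (a) $T(D) \in \cat{M}_2$ yields $T(D) \in \cat{M}_3$, (b) $T^{-1}(S) \in \cat{M}_2$ for $S \in \cat{M}_3$, (c) $T^{-1}T \cong \id_{\cat{M}_2}$, and (d) $TT^{-1} \cong \id_{\cat{M}_3}$. The strategy throughout is to compute kernels, cokernels, and cohomologies using the first isomorphism theorem and the injection/surjection hypotheses, then match these canonically with the original data.

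For (a): short exactness of the triad sequence follows by a diagram chase using $\pi\iota = kh$, where $h$ and $k$ are the natural factorizations of $\pi\iota$ through $H(D^\bullet) = \on{ker}(R)/\on{im}(L)$. Injectivity of $(c_-, d_-^2) = (\iota_A, h)$ uses $a_L$ injective: if $\iota_A(x) = 0$ and $h(x) = 0$, then $x = L(y)$ for some $y$ with $a_L(y) = 0$, so $y = 0$. Surjectivity of $(c_+, d_+^2) = (\pi_A, -k)$ uses $a_R$ surjective: given a class $\overline{(a,b)} \in \on{coker}(L)$, choose $a' \in \sh{A}$ with $a_R(a') = a_R(a) + b_R(b)$ so that $(a-a', b) \in \on{ker}(R)$ provides the desired lift.

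For (b): injectivity of the $\sh{A}$-component of the left map is immediate from injectivity of $(c_-, d_-^2)$; surjectivity of the $\sh{A}$-component of the right map follows once one identifies the cokernel $(\sh{A} \oplus \sh{B}^1)/\on{im}(c_-, d_-^1)$ with $\sh{F}_+/\on{im}(d_+^2)$ (using the short exact sequence defining $S$) and invokes surjectivity of $(c_+, d_+^2)$. For (c), starting from a diad $D$: $\on{ker}(h) = \on{im}(L) \cong \sh{F}_L$ by injectivity of $L$, the restriction of $(\iota_A, \iota_B)$ to $\on{im}(L)$ is $L$, and $(\sh{A} \oplus \sh{B})/\on{ker}(R) \cong \sh{F}_R$ by surjectivity of $R$. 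For (d), starting from a triad $S$ and setting $D = T^{-1}(S)$: one identifies $\on{ker}(R_D) \cong \sh{F}_-$ (injectivity of $(c_-, d_-^1)$), $\on{coker}(L_D) \cong \sh{F}_+$ (from the short exact sequence defining $S$ and the surjectivity of $(c_+, d_+^1)$), and $H(D^\bullet) = \sh{F}_-/\on{ker}(d_-^2) \cong \sh{B}^2$ via $d_-^2$; under these identifications $(\iota_A, \iota_B, h)$ becomes $(c_-, d_-^1, d_-^2)$, and analogously on the right.

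The main obstacle is showing that $d_-^2 \colon \sh{F}_- \to \sh{B}^2$ is surjective (and, by the reflection symmetry of $\cat{M}_3$, $d_-^1$ onto $\sh{B}^1$), which is needed for the final identification in (d). For this I would apply the snake lemma to the pair of short exact sequences $0 \to \on{ker}(d_-^2) \to \sh{F}_- \to \sh{F}_-/\on{ker}(d_-^2) \to 0$ and $0 \to \sh{A} \oplus \sh{B}^1 \to \sh{A} \oplus \sh{B}^1 \oplus \sh{B}^2 \to \sh{B}^2 \to 0$ with injective vertical maps induced by $(c_-, d_-^1, d_-^2)$; combined with the separate identification $(\sh{A} \oplus \sh{B}^1)/(c_-, d_-^1)(\on{ker}(d_-^2)) \cong \sh{F}_+$ (via surjectivity of $(c_+, d_+^1)$ and the short exactness of $S$), the snake sequence forces $\sh{B}^2/\on{im}(d_-^2) = 0$. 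Once this surjectivity is in hand, the rest of the verification is formal and naturality of both composite isomorphisms is visible by inspection of the formulas.
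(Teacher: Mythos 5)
Your proposal is correct and follows the same overall plan as the paper: verify (a) $T(D)\in\cat{M}_3$, (b) $T^{-1}(S)\in\cat{M}_2$, and (c)--(d) the two composites are the identity, with the surjectivity of $d_-^2$ as the decisive fact in (d). The local arguments differ in a way worth recording. The paper reformulates membership in $\cat{M}_3$ in terms of a family of cartesian squares and a ``small'' short exact sequence $0 \to \sh{F}_- \xrightarrow{(c_-,d_-^i)} \sh{A}\oplus\sh{B}^i \xrightarrow{(c_+,d_+^i)} \sh{F}_+ \to 0$, then gets the various injectivities and surjectivities by base change along these squares; you argue directly with elements (and a snake lemma in (d)), which is more elementary but equivalent. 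A genuine point in your favor is the identification, in step (b), of $(\sh{A}\oplus\sh{B}^1)/\on{im}(c_-,d_-^1)$ with $\sh{F}_+/\on{im}(d_+^2)$ rather than with $\sh{F}_+$: the composite $(c_+,d_+^i)\circ(c_-,d_-^i) = c_+c_- + d_+^id_-^i$ equals $-d_+^{3-i}d_-^{3-i}$, which need not vanish, so the paper's ``small sequence'' is not in general a complex and the quotient by the image of $(c_-,d_-^i)$ on all of $\sh{F}_-$ is $\sh{F}_+/\on{im}(d_+^{3-i})$, not $\sh{F}_+$. (It is $\sh{F}_+$ when $(c_-,d_-^i)$ is restricted to $\on{ker}(d_-^{3-i})$, which is the relevant cokernel in step (d).) Your version keeps this distinction straight precisely where it matters; both yield the needed surjectivity of the $\sh{A}$-component, but yours is the more careful account. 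Your snake-lemma argument for the surjectivity of $d_-^2$ is a somewhat more roundabout route to the same conclusion the paper obtains directly from the cartesian square (a base change of the surjection $(c_+,d_+^{3-i})$); it does work, once the two cokernels you compare are identified with $\sh{F}_+$ in a compatible way (which they are, since the comparison map is induced by $(a,b)\mapsto(a,b,0)$).
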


\begin{proof}
 Before beginning the verification of the many necessary facts, we observe that the property of a
 sequence $S$ as above being in $\cat{M}_3$ is equivalent to the following diagram being
 cartesian
 \begin{equation}
  \label{eq:cartesian equivalent}
  \vcenter{
  \xymatrix@C+2em{
    {\sh{F}_-} & {\sh{B}^i} \\
    {\sh{A}} \oplus \sh{B}^{3 - i} & {\sh{F}_+}
    \ar"1,1";"1,2"^-{d_-^i}
    \ar"1,2";"2,2"^{-d_+^i}
    \ar"1,1";"2,1"_{(c_-, d_-^{3 - i})}
    \ar"2,1";"2,2"_-{(c_+, d_+^{3 - i})}
   }
   }
 \end{equation}
 and the following smaller sequence being exact
 \begin{equation}
  \label{eq:small ses}
  0 \to \sh{F}_-               \xrightarrow{(c_-, d_-^i)}
        \sh{A} \oplus \sh{B}^i \xrightarrow{(c_+, d_+^i)}
        \sh{F}_+ \to 0.
 \end{equation}
 for $i = 1,2$.  Indeed, for \ref*{eq:cartesian equivalent}, the diagram is cartesian if and only
 if $S$ is exact in the middle, and for \ref*{eq:small ses}, the arrows are respectively injective
 and surjective by hypothesis if $S \in \cat{M}_3$, while exactness in the middle follows from
 that of $S$.  For readability, we continue the proof as several sub-lemmas.
 
 \medskip
 \begin{paragraph}{$T(D^\bullet) \in \cat{M}_3$:}
  It is easily verified that $T(D^\bullet)$ is an exact sequence. To see that $(\pi_A, -k)$ is
  surjective and $(\iota_A, h)$ injective, we consider the cartesian diagrams
  \begin{align*}
   \xymatrix@C+2em{
    {\sh{A} \oplus H(D^\bullet)} & {\on{coker}(L)} \\
    {\sh{A}} & {\sh{F}_R}
    \ar"1,1";"1,2"^-{(\pi_A, -k)}
    \ar"1,2";"2,2"
    \ar"1,1";"2,1"
    \ar"2,1";"2,2"^{a_R}
   }
   &&
   \xymatrix@C+2em{
    {\on{ker}(R)} & {\sh{A} \oplus H(D^\bullet)} \\
    {\sh{F}_L} & {\sh{A}}
    \ar"1,1";"1,2"^-{(\iota_A,h)}
    \ar"1,2";"2,2"
    \ar"1,1";"2,1"
    \ar"2,1";"2,2"^{a_L}
   }
  \end{align*}
  and use that $a_R$ is surjective and $a_L$ is injective.  Likewise, $(\pi_B, -k)$ is
  surjective and $(\iota_B, h)$ is injective.
 \end{paragraph}
 
 \medskip
 \begin{paragraph}{$T^{-1}(S) \in \cat{M}_2$:}
  Clearly, $T^{-1}(S)$ is a complex, since the sequence $\sh{F}_1 \to \sh{A} \oplus \sh{B}^1 \to
  \on{coker}(c_-, d_-^1)$ is exact (hence a complex).  Since $(c_-, d_-^2)$ is injective by
  hypothesis, $c_-|_{\on{ker}(d_-^2)}$ is injective.  We must show that $\sh{A} \to
  \on{coker}(c_-, d_-^1)$ is surjective, where by \ref{eq:small ses} with $i = 1$ we have
  $\on{coker}(c_-, d_-^1) = \sh{F}_+$;  consider the diagram
  \begin{equation*}
   \xymatrix@C+1em{
    {\sh{A} \oplus \sh{F}_-} & {\sh{A}} \\
    {\sh{A} \oplus \sh{B}^1} & {\sh{F}_+}
    \ar"1,1";"1,2"^-{(\id, -c_-)}
    \ar"1,2";"2,2"^{c_+}
    \ar"1,1";"2,1"_{\id \oplus d_-^1}
    \ar"2,1";"2,2"^-{(c_+, d_+^1)}
   }
  \end{equation*}
  which is cartesian by exactness of \ref*{eq:small ses}. Since the bottom arrow is a surjection,
  for $c_+$ to be a surjection it suffices to show that the left arrow is.  By
  \ref{eq:cartesian equivalent} with $i = 1$, $d_-^1$ is a surjection since the bottom arrow there
  is a surjection by  hypothesis.
 \end{paragraph}
 
 \medskip
 \begin{paragraph}{$T^{-1} \circ T \cong \id$:}
  Its $\sh{F}_L$ is $\on{ker}(h) = \on{im}(L) = \sh{F}_L$; its $\sh{A}$ and $\sh{B}$ are indeed
  $\sh{A}$ and $\sh{B}$, and its $\sh{F}_R$ is $\on{coker}(\iota) = \sh{F}_R$; one checks quickly
  that the maps are right as well.
 \end{paragraph}
 
 \medskip
 \begin{paragraph}{$T \circ T^{-1} \cong \id$:}
  Its $\sh{F}_-$ is $\on{ker}(\sh{A} \oplus \sh{B}^1 \to \on{coker}(c_-, d_-^1)) = \sh{F}_-$ since
  $(c_-, d_-^1)$ is an injection; its $\sh{A}$ and $\sh{B}^1$ are obviously the original $\sh{A}$
  and $\sh{B}^1$.  The small sequence \ref{eq:small ses} with $i = 1$ shows that $\sh{F}_+$ is
  correct as well.  Finally, for $\sh{B}^2$, we must show that $\sh{F}_-/\on{ker}(d_-^2) =
  \sh{B}^2$, or in other words, that $d_-^2$ is surjective, which follows from \ref{eq:cartesian
  equivalent} with $i  = 2$. \qedhere
 \end{paragraph}
\end{proof}

Clearly, both of the complexes \ref{eq:phi complex} and \ref{eq:inverse gluing diagram} are diads.
Comparing them, we find that the construction of the latter is given by:

\begin{theorem}{cor}{diad reflection}
 The reflection functor on a diad is the complex
 \begin{equation*}
  r(D^\bullet) = \Bigl( \on{ker}(a_R) \xrightarrow{(a_L',b_L')} \sh{A} \oplus H(D^\bullet)
                       \xrightarrow{(a_R', b_R')} \on{coker}(a_L) \Bigr),
 \end{equation*}
 where $a_L'$ is the natural inclusion and $a_R'$ the natural projection, $b_L' = h \circ (a_L',
 0)$, and $b_R'$ factors $-k$ through $\on{coker}(a_L) \subset \on{coker}(L)$.
\end{theorem}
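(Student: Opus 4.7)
The plan is to unwind the definitions of $T$, reflection, and $T^{-1}$ in sequence and match the resulting diad against the stated description. Writing $T(D^\bullet)$ with $\sh{B}^1 = \sh{B}$ and $\sh{B}^2 = H(D^\bullet)$ and applying the reflection, we obtain a triad with $d_-^1 = h$, $d_-^2 = \iota_B$, $d_+^1 = -k$, $d_+^2 = \pi_B$. Applying $T^{-1}$, the left term is $\on{ker}(\iota_B \colon \on{ker}(R) \to \sh{B})$; an element of $\on{ker}(R)$ of the form $(a, 0)$ satisfies $a_R(a) = 0$, so this is canonically $\on{ker}(a_R) \subset \sh{A}$. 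Under this identification, the restrictions of $\iota_A$ and $h$ to $\on{ker}(a_R)$ are exactly $a_L'$ (the natural inclusion) and the composite $h \circ (a_L', 0) = b_L'$. The middle term is $\sh{A} \oplus H(D^\bullet)$ as required, and $a_R'$ should be the natural projection $\sh{A} \surj \on{coker}(a_L)$.

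The main step, and the main obstacle, is identifying the cokernel produced by the $T^{-1}$ formula, namely $\on{coker}((\iota_A, h) \colon \on{ker}(R) \to \sh{A} \oplus H(D^\bullet))$, with $\on{coker}(a_L)$. I would build the comparison map $\psi \colon \sh{A} \oplus H(D^\bullet) \to \on{coker}(a_L)$ by $\psi(a, \bar u) = [a - \iota_A(\tilde x)]$, where $\tilde x \in \on{ker}(R)$ is any lift of $\bar u$ under the surjection $h$. Well-definedness follows because any two lifts differ by an element of $\on{im}(L) = \{(a_L(y), b_L(y))\}$, whose $\iota_A$-image lies in $a_L(\sh{F}_L)$ by injectivity of $a_L$. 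The map $\psi$ is manifestly surjective and annihilates $\on{im}((\iota_A, h))$; conversely, if $\psi(a, \bar u) = 0$ then $a = \iota_A(\tilde x) + a_L(y)$, and $(a_L(y), 0) = (\iota_A, h)((a_L(y), b_L(y)))$ since $(a_L(y), b_L(y)) \in \on{im}(L) \subset \on{ker}(R)$ is killed by $h$, so $(a, \bar u) \in \on{im}((\iota_A, h))$. Hence $\psi$ descends to the required isomorphism.

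Finally, tracing the map from the $H(D^\bullet)$ summand, $b_R'(\bar u) = \psi(0, \bar u) = -[\iota_A(\tilde x)]$, while $-k(\bar u) = -\pi(\tilde x) = -[(\iota_A(\tilde x), \iota_B(\tilde x))] \in \on{coker}(L)$. The canonical map $\on{coker}(L) \to \on{coker}(a_L)$ sending $[(a,b)] \mapsto [a]$ (well-defined because $\on{im}(L)$ collapses to $a_L(\sh{F}_L)$ after forgetting the $\sh{B}$-coordinate) carries $-k(\bar u)$ to $-[\iota_A(\tilde x)] = b_R'(\bar u)$, so $b_R'$ is precisely the composition of $-k$ with this canonical comparison, which is the intended content of the phrase ``factors $-k$ through $\on{coker}(a_L) \subset \on{coker}(L)$''. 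The diad axioms for $r(D^\bullet)$ then come for free from \ref{diad-triad equivalence}.
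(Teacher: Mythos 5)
Your proof is correct and follows essentially the same route as the paper: unwind $T$, reflect, apply $T^{-1}$, and identify the resulting pieces with $\on{ker}(a_R)$, $\sh{A}\oplus H(D^\bullet)$, $\on{coker}(a_L)$ together with the indicated maps. The only (cosmetic) difference is orientation: you construct the comparison $\on{coker}(\iota_A, h)\xrightarrow{\sim}\on{coker}(a_L)$ directly via the explicit formula $\psi(a,\bar u)=[a-\iota_A(\tilde x)]$, whereas the paper exhibits the inverse $(\id,0)$ and checks injectivity by a cartesian square, leaving the $b_R'$ computation as a diagram chase, which you carry out in full; the parenthetical ``by injectivity of $a_L$'' in your well-definedness remark is unnecessary (the $\iota_A$-image of $\on{im}(L)$ is $\on{im}(a_L)$ unconditionally), but that is harmless.
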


\begin{proof}
 That is, $T^{-1} r T(D^\bullet) = r(D^\bullet)$ as defined above. We need to show that
 $\on{ker}(\iota_B) = \on{ker}(a_R)$ and $\on{coker}(\iota_A, h) = \on{coker}(a_L)$, and prove the
 identities of the morphisms.  The first is easily verified directly, considering both as subobjects
 of $\sh{A} \oplus \sh{B}$, while for the second, we assert that the map
 \begin{equation*}
  \map{(\id,0)}{\sh{A}}{\sh{A} \oplus H(D^\bullet)}
 \end{equation*}
 induces the desired isomorphism from the latter to the former.  To show that it identifies
 $\on{im}(a_L)$ with $\on{im}(\iota_A,h)$, it suffices to check that the following diagram is
 cartesian:
 \begin{equation*}
  \xymatrix@C+1em{
   {\sh{F}_L} & {\sh{A}} \\
   {\on{ker}(R)} & {\sh{A} \oplus H(D^\bullet)}
   \ar"1,1";"1,2"^-{a_L}
   \ar"1,2";"2,2"^{(\id,0)}
   \ar"1,1";"2,1"_L
   \ar"2,1";"2,2"_-{(\iota_A,h)}
  }
 \end{equation*}
 which follows from the definition of $H(D^\bullet) = \on{ker}(R)/\on{im}(R)$.  The identities of
 $a_L'$, $b_L'$, and $a_R'$ are clear from these constructions, while for $b_R'$ it is fastest to
 chase the above diagram.
\end{proof}

\begin{theorem}{thm}{gluing}
 The gluing category $\cat{M}_f(U,Z)$ is abelian; $\map{F_f}{\cat{M}(X)}{\cat{M}_f(U,Z)}$ and
 $\map{G_f}{\cat{M}_f(U,Z)}{\cat{M}(X)}$ are mutually inverse exact functors, and so
 $\cat{M}_f(U,Z)$ is equivalent to $\cat{M}(X)$.
\end{theorem}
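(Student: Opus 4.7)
The plan is to deduce \ref{gluing} from the diad/triad machinery of \ref{diad-triad equivalence} and \ref{diad reflection}. First I would note that both complexes in question are diads in the sense of \ref{diads}: for \ref{eq:phi complex}, the injectivity of $\alpha_-$ and the surjectivity of $\alpha_+$ follow from the two short exact sequences of \ref{xi sequences}, and likewise $\beta_+$ is injective and $\beta_-$ is surjective, so \ref{eq:inverse gluing diagram} is also a diad. Hence the reflection $r$ of \ref{diad reflection} applies to both.

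The heart of the argument is that $r$ exchanges these two constructions. Starting from $D^\bullet = C^\bullet(\sh{F})$, with $\sh{A} = \Xiun_f(j^*\sh{F})$ and $\sh{B} = \sh{F}$, the sequences in \ref{xi sequences} identify
\begin{equation*}
 \on{ker}(\alpha_+) \cong \on{coker}(\alpha_-) \cong \Psiun_f(j^*\sh{F}),
\end{equation*}
while $H(D^\bullet) = \Phiun_f(\sh{F})$ by definition. Thus $r(D^\bullet)$ takes the form
\begin{equation*}
 \Psiun_f(j^*\sh{F}) \to \Xiun_f(j^*\sh{F}) \oplus \Phiun_f(\sh{F}) \to \Psiun_f(j^*\sh{F}),
\end{equation*}
matching the shape of the complex \ref{eq:inverse gluing diagram} applied to the gluing datum $F_f(\sh{F})$. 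A direct check using the formulas for $a_L', b_L', a_R', b_R'$ in \ref{diad reflection} together with the explicit constructions $u = h\circ(\beta_+,0)$ and $v = \beta_-\circ\on{pr}$ from the proof of \ref{vanishing cycles} confirms that the two diads agree on the nose. Since the reflection is an involution which swaps the summand $\sh{B}$ of a diad with its middle cohomology $H$, the cohomology of $r(D^\bullet)$ is precisely the original $\sh{B}$; so $G_f F_f(\sh{F}) \cong \sh{F}$ along with the correct identification on each component. The reverse composition $F_f G_f \cong \id$ is proved symmetrically, supplemented by the observation that restricting \ref{eq:inverse gluing diagram} to $U$ collapses it to $0 \to \sh{F}_U \to 0$ (since $\Psiun_f$ and $\Phiun_f$ are supported on $Z$ while $\Xiun_f(\sh{F}_U)|_U = \sh{F}_U$), so that $j^*G_f = \sh{F}_U$ as required.

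Abelianness of $\cat{M}_f(U,Z)$ is then automatic from the resulting equivalence with the abelian category $\cat{M}(X)$; the transported abelian structure coincides with the natural componentwise one, making $F_f$ and $G_f$ manifestly exact. The main obstacle I anticipate is the morphism-matching in the middle paragraph: confirming that the abstractly defined $b_L' = h\circ(a_L',0)$ and the factoring of $-k$ through $\on{coker}(a_L)$ from \ref{diad reflection} coincide respectively with $u$ and $-v$ from \ref{vanishing cycles} requires a careful chase through the identifications $\on{ker}(\alpha_+) \cong \Psiun_f \cong \on{coker}(\alpha_-)$ and the subquotient definition of $\Phiun_f$, and the consistency $v\circ u = 1 - t = \beta_-\circ\beta_+$ must be verified from both sides. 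This is a routine diagram chase but is the one genuinely computational step.
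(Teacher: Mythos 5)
Your proposal is correct and follows essentially the same route as the paper: identify both \ref{eq:phi complex} and \ref{eq:inverse gluing diagram} as diads, recognize that the reflection functor $r$ of \ref{diad reflection} exchanges them (using \ref{xi sequences} to identify $\ker(\alpha_+)\cong\on{coker}(\alpha_-)\cong\Psiun_f$), and conclude that $F_f$ and $G_f$ are mutually inverse since $r$ is an involution. The one structural difference is the abelianness argument: the paper proves it first and directly, by verifying that componentwise kernels and cokernels (with the induced $\tilde u,\tilde v$) satisfy the universal properties, and that a map with vanishing kernel and cokernel is an isomorphism; you instead derive abelianness a posteriori by transport of structure across the equivalence. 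Your ordering is valid and slightly slicker for establishing the statement as worded, though the paper's direct verification has the advantage of exhibiting concretely that kernels and cokernels in $\cat{M}_f(U,Z)$ are the obvious componentwise ones -- a fact you assert ("the transported abelian structure coincides with the natural componentwise one") but do not check, and which is independently useful for working in the gluing category. Your identification of the final computational step (matching $b_L'$, $b_R'$ with $u$, $-v$) as the genuine content is exactly right.
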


\begin{proof}
 That $\cat{M}_f(U,Z)$ is abelian amounts to proving that taking coordinatewise kernels and
 cokernels works.  That is, if we have $(\sh{M}, \sh{F}_Z, u, v)$ and $(\sh{M}', \sh{F}_Z', u',
 v')$ with maps $\map{a_U}{\sh{M}}{\sh{M}'}$, $\map{a_Z}{\sh{F}_Z}{\sh{F}_Z'}$ and such that the
 following diagram commutes:
 \begin{equation*}
  \xymatrix{
   \Psiun_f (\sh{M}) \ar[r]^-{u} \ar[d]_{\Psiun_f(a_U)}
     & \sh{F}_Z \ar[r]^-{v} \ar[d]^{a_Z}
     & \Psiun_f (\sh{M}) \ar[d]^{\Psiun_f(a_U)} \\
   \Psiun_f (\sh{M}') \ar[r]^-{u'} & \sh{F}_Z' \ar[r]^-{v'} & \Psiun_f (\sh{M}')
  }
 \end{equation*}
 then $(\on{ker} a_U, \on{ker} a_Z, \tilde{u}, \tilde{v})$ is a kernel for $(a_U, a_V)$, where
 $\tilde{u}$ and $\tilde{v}$ are induced maps; likewise for the cokernel; and we must show that
 $(a_U, a_V)$ is an isomorphism if and only if the kernel and cokernel vanish.  The maps $\tilde{u}$
 and $\tilde{v}$ are constructed from the natural sequence of kernels (or cokernels) in the above
 diagram, and the exactness of $\Psiun_f$, and once they exist it is obvious from the definition of
 morphisms in $\cat{M}_f(U,Z)$ that the desired gluing data is a kernel (resp.\ cokernel).  Since
 $\cat{M}(U)$ and $\cat{M}(Z)$ are abelian and kernels and cokernels are taken coordinatewise, the
 last claim follows.
 
 To show that $F_f$ and $G_f$ are mutually inverse, we interpret $\cat{M}(X)$ and $\cat{M}_f(U,Z)$
 as diad categories in the form given, respectively, by diagrams \ref{eq:phi complex} and
 \ref{eq:inverse gluing diagram}.  The reflection functor is given by \ref{diad reflection}; by
 \ref{xi sequences} and the definition of $\Phiun$, its value on \ref*{eq:phi complex} is that of
 the functor $F_f$.  For the same reason, its value on \ref*{eq:inverse gluing diagram} is that of
 $G_f$ interpreted as a complex of type \ref*{eq:phi complex} (the $\sh{F}$ term is what we have
 previously called the value of $G_f$).  Since the reflection functor is an involution, $G_f$ and
 $F_f$ are mutually inverse.
\end{proof}

\section{Comments}
\label{sec:comments}

We conclude with some musings on the theory exposited here.  In the previous version
\href{http://arxiv.org/abs/1002.1686v2}{arXiv:1002.1686v2} of these notes, we gave a substantially
different proof of \ref{kernel equals cokernel}, adhering closely to that given in \cite{this}*{Key
Lemma}.  As that proof may better illuminate the two-sided limit formalism which we also omit, the
curious reader is encouraged to consult it.

\subsection*{The vanishing cycles functor\texorpdfstring{ and $\Phiun_f$}{}}

The functor $\Phiun_f$, like $\Psiun_f$, has a familiar identity.

\begin{theorem}{thm}{vanishing cycles and phi}
 There is an isomorphism of functors $\Phiun_f \cong R\phiun_f[-1]$ and a natural
 distinguished triangle
 \begin{equation*}
  \Psiun_f(j^* \sh{F}) \xrightarrow{u} \Phiun_f(\sh{F}) \to i^* \sh{F} \to
 \end{equation*}
 isomorphic to that in \ref{eq:phiun}.
\end{theorem}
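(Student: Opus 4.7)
The plan is to extract the triangle $\Psiun_f(j^*\sh{F}) \to \Phiun_f(\sh{F}) \to i^*\sh{F} \to$ directly from the defining complex of $\Phiun_f$, and then compare it with the shifted and rotated form of \ref{eq:phiun}. Write $C^\bullet$ for the complex \ref{eq:phi complex} (placed in degrees $-1,0,1$) whose middle cohomology is $\Phiun_f(\sh{F})$; since $\alpha_-$ is injective and $\alpha_+$ surjective by \ref{xi sequences}, $C^\bullet$ is quasi-isomorphic to $\Phiun_f(\sh{F})$ concentrated in degree $0$. Inside $C^\bullet$, the subcomplex
\begin{equation*}
 E^\bullet = \bigl[\,0 \to \Xiun_f(j^*\sh{F}) \xrightarrow{\alpha_+} j_*j^*\sh{F}\,\bigr]
\end{equation*}
(with $\sh{F}$-coordinate $0$) has quotient
\begin{equation*}
 D^\bullet = C^\bullet/E^\bullet = \bigl[\,j_!j^*\sh{F} \xrightarrow{\gamma_-} \sh{F} \to 0\,\bigr].
\end{equation*}

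By the short exact sequence $0 \to \Psiun_f(j^*\sh{F}) \xrightarrow{\beta_+} \Xiun_f(j^*\sh{F}) \xrightarrow{\alpha_+} j_*j^*\sh{F} \to 0$ of \ref{xi sequences}, we have $E^\bullet \simeq \Psiun_f(j^*\sh{F})$ in the derived category. By \ref{eq:extension triangle}, $D^\bullet$ is the cone complex representing $i^*\sh{F}$, so $D^\bullet \simeq i^*\sh{F}$. The short exact sequence of complexes $0 \to E^\bullet \to C^\bullet \to D^\bullet \to 0$ therefore yields a distinguished triangle
\begin{equation*}
 \Psiun_f(j^*\sh{F}) \to \Phiun_f(\sh{F}) \to i^*\sh{F} \to \Psiun_f(j^*\sh{F})[1],
\end{equation*}
and by inspection the first arrow is the composite $\Psiun_f(j^*\sh{F}) \xrightarrow{\beta_+} \Xiun_f(j^*\sh{F}) \incl \Xiun_f(j^*\sh{F}) \oplus \sh{F}$, taken mod $\on{im}(\alpha_-,\gamma_-)$; this is exactly $u$ in the notation of \ref{vanishing cycles}.

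To identify $\Phiun_f$ with $R\phiun_f[-1]$, rotate the triangle just constructed to $i^*\sh{F}[-1] \to \Psiun_f(j^*\sh{F}) \to \Phiun_f(\sh{F}) \to i^*\sh{F}$, and compare with \ref{eq:phiun} shifted by $[-1]$, namely $i^*\sh{F}[-1] \to \Psiun_f(j^*\sh{F}) \to R\phiun_f(\sh{F})[-1] \to i^*\sh{F}$. Both triangles have the same outer two terms and same outer maps up to a choice of identification, and the desired iso $\Phiun_f(\sh{F}) \cong R\phiun_f(\sh{F})[-1]$ and compatibility of triangles follows from TR3 once one verifies that the connecting morphism $i^*\sh{F}[-1] \to \Psiun_f(j^*\sh{F})$ coming from the SES $0 \to E^\bullet \to C^\bullet \to D^\bullet \to 0$ agrees with the natural adjunction map $i^*\sh{F} \to R\psiun_f(j^*\sh{F})$ shifted.

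The main obstacle is precisely this matching of boundary maps: the boundary morphism from the filtration is defined by a snake-lemma-type recipe using $\alpha_-$, $\gamma_-$, $\alpha_+$, and $\beta_+$, whereas the map in \ref{eq:phiun} is defined via the $(v^*, v_*)$-adjunction inside the formula $R\psi_f = i^* j_* v_* v^*$. I would reduce this comparison to the defining triangle \ref{eq:psiun} for $\Psiun_f$, which characterizes $\Psiun_f(j^*\sh{F})$ as the ``$1{-}t$-deformation'' of $i^*j_*j^*\sh{F}$; then both boundary maps can be computed by factoring through the natural map $i^*\sh{F} \to i^*j_*j^*\sh{F}$ given by \ref{eq:extension triangle}, at which point they are seen to coincide via the explicit description in \ref{nearby cycles construction} of $\Psiun_f$ as $\on{ker}(\alpha^a)$. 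Functoriality of the resulting isomorphism in $\sh{F}$ is then immediate, since every ingredient of the comparison is a natural transformation.
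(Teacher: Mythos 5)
Your construction of the triangle is correct, and takes a genuinely different (though closely related) route from the paper's. The paper applies the octahedral axiom to the composition $j_!j^*\sh{F} \to K \to \sh{F}$, where $K = \on{ker}(\alpha_+,-\gamma_+)$; it then needs a side argument that $\on{pr}\colon K \to \sh{F}$ is surjective (so that $C[-1]$ is perverse) and that $\on{ker}(\on{pr}) \cong \Psiun_f(j^*\sh{F})$. You instead filter the complex \ref{eq:phi complex} by the explicit subcomplex $E^\bullet$ supported on the $\Xiun_f$-coordinate, and read off the three terms of the triangle directly from the short exact sequence of complexes $0 \to E^\bullet \to C^\bullet \to D^\bullet \to 0$, identifying $E^\bullet \simeq \Psiun_f(j^*\sh{F})$ from \ref{xi sequences} and $D^\bullet \simeq i^*\sh{F}$ from \ref{eq:extension triangle}. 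This is a cleaner realization of the same triangle: the connecting map of a short exact sequence of complexes is canonical, so no cone-nonuniqueness questions arise, and you get the identification of the first map with $u = (\beta_+,0)$ essentially for free by inspecting the inclusion $E^0 \hookrightarrow C^0$ on cohomology.

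On the second part (the isomorphism $\Phiun_f \cong R\phiun_f[-1]$), you and the paper are in the same boat. The paper's sentence ``Comparing with \ref{eq:phiun}, we find that $R\phiun_f(\sh{F})[-1]\cong\Phiun_f(\sh{F})$ is perverse. Conversely, starting from \ref{eq:phiun}\dots'' is asserting exactly the boundary-map compatibility you flag as ``the main obstacle,'' and the paper does not verify it any more carefully than you do. Your sketch of how to reduce it — factoring both connecting maps through the adjunction map $i^*\sh{F} \to i^*j_*j^*\sh{F}$ of \ref{eq:extension triangle} and then tracing through \ref{eq:psiun} and the identification $\Psiun_f \cong \on{ker}(\alpha^a)$ of \ref{nearby cycles construction} — is a reasonable route and in fact somewhat more explicit about what needs checking than the original. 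Neither the paper nor your proposal carries the boundary comparison out in full detail, but your approach is not a gap relative to the paper's own standard of rigor here.
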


\begin{proof}
 According to the definition of $\Phiun_f$ in \ref{vanishing cycles}, we have a short exact sequence
 and, thus, a corresponding distinguished triangle of the same form:
 \begin{equation*}
  0 \to j_! j^* \sh{F} \to \on{ker}(\alpha_+, -\gamma_+) \to \Phiun_f(\sh{F}) \to 0.
 \end{equation*}
 Since $K = \on{ker}(\alpha_+, -\gamma_+) \subset \Xiun_f(j^* \sh{F}) \oplus \sh{F}$, there is a
 projection map $\map{\on{pr}}{K}{\sh{F}}$ commuting with the inclusion of $j_! j^* \sh{F}$.

 Now we apply the octahedral axiom of triangulated categories as given in \cite{BBD}*{(1.1.7.1)}:
 \newlength{\arrlen} \setlength{\arrlen}{0.75in}
 \begin{equation*}
  \xymatrix{
   \ar@{}"1,1";"1,1"*+{j_! j^* \sh{F}}="A"
   \ar"A";"A"+/va(60)\arrlen/*+{K}="B"
   \ar"B";"B"+/va(60)\arrlen/*+{\Phiun_f(\sh{F})}="C'"
   \ar"C'";"C'"+/va(-60)\arrlen/*+{i^* \sh{F}}="B'"
   \ar"B'";"B'"+/va(-60)\arrlen/*+{C}="A'"
   \ar@{}"A";"B'"|!{"B";"A'"}*{}="M"
   \ar"A";"M"*+{\sh{F}}="C"
   \ar"C";"A'" \ar"C";"B'" \ar"B";"C"^{\on{pr}} \ar"C'";"B'"
  }
 \end{equation*}
 where all the straight lines are distinguished triangles, both the (geometric) triangles are
 commutative, and the square commutes.  It is easy to see that $\on{pr}$ must be surjective because
 $\alpha_+$ is surjective; thus, since both $K$ and $\sh{F}$ are perverse, $C[-1]$ is also
 perverse, and so we have an exact sequence
 \begin{equation*}
  0 \to C[-1] \to K \xrightarrow{\on{pr}} \sh{F} \to 0.
 \end{equation*}
 But by definition, $\on{ker}(\on{pr}) = \on{ker}(\alpha_+) \oplus 0$, and therefore $C[-1] \cong
 \Psiun_f(j^* \sh{F})$.  Note that the inclusion then becomes the map $u$, as defined in the proof
 of \ref{vanishing cycles}.  Rotating the other triangle in the above octahedral diagram, we have
 \begin{equation*}
  \Psiun_f(j^* \sh{F}) \xrightarrow{u} \Phiun_f(\sh{F}) \to i^* \sh{F} \to.
 \end{equation*}
 Comparing with \ref{eq:phiun}, we find that $R\phiun_f(\sh{F})[-1] \cong \Phiun_f(\sh{F})$ is
 perverse. Conversely, starting from \ref*{eq:phiun} in place of the above triangle, we conclude by
 the octahedral axiom that $R\phiun_f(\sh{F})[-1]$ is the cohomology of \ref{eq:phi complex}, which
 admits a unique extension to a functor of $\sh{F}$ compatible with the octahedral diagram.
 Therefore, we conclude an isomorphism of functors $\Phiun_f \cong R\phiun_f[-1]$.
\end{proof}

\subsection*{The full nearby cycles functor\texorpdfstring{ $R\psi_f$}{}}

As Beilinson observes, the full nearby cycles functor $R\psi_f(\sh{M})$, for $\sh{M} \in
\cat{M}(U)$, can be recovered from $R\psiun_f$ as applied to variations of $\sh{M}$.  Here we must
assume that the field of coefficients is algebraically closed.

\begin{theorem}{lem}{nearby cycles decomposition}
 There exists a unique isomorphism of functors $\cat{D}(U) \to \cat{D}(Z)$
 \begin{equation}
  \label{eq:nearby cycles decomposition}
  R\psi_f = \bigoplus_{\lambda \in \C^*} R\psi_f^\lambda
 \end{equation}
 where for any constructible complex $A^\bullet_U$ on $U$, $\lambda - t$ is nilpotent on
 $R\psi_f^\lambda(A^\bullet_U)$.
\end{theorem}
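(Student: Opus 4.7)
The plan is to mimic the proof of \ref{unipotent nearby cycles}, but decomposing along all generalized eigenspaces at once, which is made possible by the algebraic closure of the coefficient field.

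First, for any constructible sheaf $\sh{F}$ on $U$ and open $V \subset Z$, by Deligne's constructibility theorem the space $\psi_f(\sh{F})(V) = H^0 R\psi_f(\sh{F})(V)$ is finite-dimensional with an action of $t$. Over an algebraically closed field, a finite-dimensional $k[x]$-module decomposes canonically (by the classification of modules over a PID) as the direct sum $\bigoplus_{\lambda} M_\lambda$ of generalized eigenspaces $M_\lambda = \bigcup_n \ker(x - \lambda)^n$. This decomposition is functorial for morphisms of finite-dimensional $k[x]$-modules, hence sheafifies: for each $\lambda \in \C^*$ (we exclude $\lambda = 0$ because $t$ is invertible), we obtain a subfunctor $\psi_f^\lambda \subset \psi_f$ on constructible sheaves such that $\psi_f = \bigoplus_\lambda \psi_f^\lambda$. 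For any single $\sh{F}$, only finitely many summands are nonzero, since a constructible sheaf admits a finite stratification and only finitely many eigenvalues appear on each stratum.

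Second, I would pass to derived functors. Each $\psi_f^\lambda$, being a direct summand of the left-exact $\psi_f$, is itself left-exact, and any $\psi_f$-acyclic sheaf is \emph{a fortiori} $\psi_f^\lambda$-acyclic. Thus a single injective (or $\psi_f$-acyclic) resolution simultaneously computes all $R\psi_f^\lambda$, yielding the direct sum decomposition \ref{eq:nearby cycles decomposition} in $\cat{D}(Z)$. That $\lambda - t$ acts nilpotently on $R\psi_f^\lambda(A^\bullet_U)$ follows because $\lambda - t$ is nilpotent on $\psi_f^\lambda$ applied to each term of the resolution and nilpotency is preserved by passage to cohomology sheaves, using the constructibility theorem again to know the stalks are finite-dimensional (so some uniform power of $\lambda - t$ annihilates the stalk).

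Finally, uniqueness is the formal categorical argument given in the proof of \ref{unipotent nearby cycles}, lightly generalized: if $N_\lambda$ and $N_\mu$ are endomorphisms of objects $x, y$ with $\lambda \neq \mu$, both $N_\lambda - \lambda$ and $N_\mu - \mu$ nilpotent, then any intertwiner $g \colon x \to y$ with $g N_\lambda = N_\mu g$ satisfies $g(N_\lambda - \lambda) = (N_\mu - \lambda)g$; but $N_\mu - \lambda = (N_\mu - \mu) + (\mu - \lambda)$ is invertible, so $g = (N_\mu - \lambda)^{-n} g (N_\lambda - \lambda)^n = 0$ for $n$ large. Thus any alternative decomposition indexed by $\C^*$ with the stated nilpotency conditions must coincide termwise with \ref{eq:nearby cycles decomposition}.

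The main conceptual obstacle is purely that the primary decomposition of a $k[x]$-module be carried out functorially and sheaf-theoretically; all of the geometric input (constructibility of $R\psi_f$, invertibility of $t$, finite-dimensionality of stalks) has already been established, so no new ideas beyond algebraic closure are required.
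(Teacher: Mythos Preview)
Your proposal is correct and follows essentially the same approach as the paper, which simply instructs the reader to repeat the argument of \ref{unipotent nearby cycles} using the full Jordan decomposition available over an algebraically closed field (and cites \cite{eigenspaces}*{Lemme 3.2.5} as an alternative). You have faithfully unpacked that instruction, including the functoriality of the generalized-eigenspace decomposition, the passage to derived functors via a common acyclic resolution, and the straightforward generalization of the uniqueness argument to distinct eigenvalues.
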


\begin{proof}
 Simply pursue the line of reasoning in \ref{unipotent nearby cycles} but, since the field of
 coefficients is algebraically closed, produce the full Jordan decomposition rather than just the
 unipotent and non-unipotent parts.  The lemma can also be deduced from \cite{eigenspaces}*{Lemme
 3.2.5}, which applies to the Jordan decomposition of an endomorphism of any complex in the derived
 category.
\end{proof}

Let $\sh{L}_\lambda$ be the local system of rank $1$ on $\Gm$ with monodromy $\lambda$; then
clearly, we have $R\psi_f^\lambda(\sh{M}) = R\psiun_f(\sh{M} \tensor f^* \sh{L}_\lambda^{-1})
\tensor L_\lambda$, where $t$ acts as $\lambda$ on the one-dimensional vector space $L_\lambda$.
Substituting into \ref{eq:nearby cycles decomposition}, we obtain:
\begin{equation*}
 R\psi_f(\sh{M})
   = \bigoplus_\lambda \Psiun_f(\sh{M} \tensor f^*\sh{L}_\lambda^{-1}) \tensor L_\lambda.
\end{equation*}
Thus, \ref{nearby cycles computations} gives a procedure for computing the full nearby cycles
functor of perverse sheaves, and $R\psi_f[-1]$ sends perverse sheaves on $U$ to perverse sheaves on
$X$.

Using some general reasoning, we can extend the properties of $\Psiun_f = R\psiun_f[-1]$ from the
subcategory of perverse sheaves to the entire derived category.  To this end, let
$\map{T}{\cat{C}}{\cat{D}}$ be a triangulated functor between triangulated categories with
t-structures, and let the respective cores be the abelian categories $\cat{A}$, $\cat{B}$. We will
assume that the objects of $\cat{C}$ are \emph{bounded above}, meaning that $\cat{C} = \bigcup_{b
\in \Z} \cat{C}^{\leqslant b}$.

\begin{theorem}{lem}{truncation}
 Suppose $T$ is right t-exact and that $T \cat{A} \subset \cat{B}$; then $T$ is t-exact.
\end{theorem}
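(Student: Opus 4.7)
The plan is to reduce to showing that $T$ is left t-exact, i.e., that $T(\cat{C}^{\geqslant 0}) \subset \cat{D}^{\geqslant 0}$; right t-exactness is given. Fix $A \in \cat{C}^{\geqslant 0}$. Since by hypothesis objects of $\cat{C}$ are bounded above, there exists some $b \geq 0$ with $A \in \cat{C}^{\leqslant b}$, and I will induct on this $b$.

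For the base case $b = 0$, we have $A \in \cat{C}^{\leqslant 0} \cap \cat{C}^{\geqslant 0} = \cat{A}$, so $TA \in \cat{B} \subset \cat{D}^{\geqslant 0}$ by the hypothesis that $T\cat{A} \subset \cat{B}$. For the inductive step, suppose the result holds for every object in $\cat{C}^{[0, b-1]}$, and consider $A \in \cat{C}^{[0, b]}$. Apply $T$ to the truncation triangle
\begin{equation*}
 H^0(A) \to A \to \tau^{\geqslant 1} A \to H^0(A)[1]
\end{equation*}
(where $H^0(A) = \tau^{\leqslant 0} A$ lies in $\cat{A}$ because $A \in \cat{C}^{\geqslant 0}$). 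The first term maps under $T$ into $\cat{B} \subset \cat{D}^{\geqslant 0}$. For the third term, observe that $(\tau^{\geqslant 1} A)[1] \in \cat{C}^{[0, b - 1]}$, so by the inductive hypothesis $T((\tau^{\geqslant 1} A)[1]) \in \cat{D}^{\geqslant 0}$; since $T$ is triangulated it commutes with the shift, so $T(\tau^{\geqslant 1} A) \in \cat{D}^{\geqslant 1} \subset \cat{D}^{\geqslant 0}$.

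I then conclude $TA \in \cat{D}^{\geqslant 0}$ from the standard fact that $\cat{D}^{\geqslant 0}$ is closed under extensions (apply the long exact sequence of the t-cohomology functors $H^i_{\cat{D}}$ to the distinguished triangle $TH^0(A) \to TA \to T\tau^{\geqslant 1}A \to$ and read off that $H^i_{\cat{D}}(TA) = 0$ for $i < 0$). The only step that requires any care is the organizational one of setting up the induction on the t-amplitude, and this is precisely where the bounded-above hypothesis on $\cat{C}$ is essential: without it, the chain of truncations would not terminate, and neither $T\cat{A} \subset \cat{B}$ nor the inductive argument would reach arbitrary objects of $\cat{C}^{\geqslant 0}$.
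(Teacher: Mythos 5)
Your proof is correct, and it takes a genuinely different — in fact more economical — route than the paper's. Both arguments are dévissage on the t-amplitude $b$ and both hinge on the bounded-above hypothesis to make the induction terminate, but you peel off cohomology from the \emph{bottom}: you form the triangle $H^0(A) \to A \to \tau^{\geqslant 1}A \to$, note $T(H^0 A) \in \cat{B}$ and (inductively) $T(\tau^{\geqslant 1}A) \in \cat{D}^{\geqslant 1}$, and then appeal to the closure of $\cat{D}^{\geqslant 0}$ under extensions. The paper instead peels off from the \emph{top}: for $x \in \cat{C}^{\leqslant b}$ it forms $\tau^{<b}x \to x \to H^b(x)[-b] \to$, observes that applying $T$ yields a triangle whose outer terms lie in $\cat{D}^{<b}$ and $\cat{D}^{\geqslant b}$, and invokes the \emph{uniqueness of the truncation triangle} to conclude $T(\tau^{<b}x) = \tau^{<b}T(x)$; iterating and then passing to cones shows that $T$ commutes with \emph{all} truncations, which is stronger than mere t-exactness (though of course equivalent to it once t-exactness is established). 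Your approach buys directness — you prove exactly what is needed, left t-exactness, with nothing extra — while the paper's buys the explicit intermediate statement that $T$ commutes with $\tau^{\leqslant n}$ and $\tau^{\geqslant n}$ for every $n$.

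One small remark: the ``closed under extensions'' fact you invoke is indeed standard and valid for any t-structure, but the parenthetical justification via the long exact sequence of $H^i_{\cat{D}}$ (reading off $H^i_{\cat{D}}(TA)=0$ for $i<0$ and concluding $TA \in \cat{D}^{\geqslant 0}$) implicitly uses non-degeneracy of the target t-structure. The cleaner, fully general argument is the Hom-orthogonality one: for $W \in \cat{D}^{\leqslant -1}$, the triangle gives an exact sequence $\on{Hom}(W, T H^0(A)) \to \on{Hom}(W, TA) \to \on{Hom}(W, T\tau^{\geqslant 1}A)$ whose outer terms vanish. This is immaterial in the application (the perverse t-structure is non-degenerate), but worth knowing.
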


\begin{proof}
 We will show that $T$ commutes with all truncations.  Suppose we have an object $x \in
 \cat{C}^{\leqslant b}$, so that there is a distinguished triangle
 \begin{equation*}
  \tau^{< b} x \to x \to \tau^{\geqslant b} x \to
 \end{equation*}
 where by definition, $\tau^{\geqslant b} x = H^b(x)[-b] \in \cat{A}[-b]$.  By hypothesis on $T$, we
 have $T(x) \in \cat{D}^{\leqslant b}$, $T(\tau^{< b} x) \in \cat{D}^{< b}$, and $T(H^b x[-b]) \in
 \cat{B}[-b] \subset \cat{D}^{\geqslant b}$.  Since $T$ is triangulated, there is a triangle
 \begin{equation*}
  T(\tau^{< b} x) \to T(x) \to T(H^b x[-b]) \to
 \end{equation*}
 and therefore, by uniqueness of the truncation triangle, it must be that $T(\tau^{< b} x) = \tau^{<
 b} T(x)$.  This is under the hypothesis that $x \in \cat{C}^{\leqslant b}$; since then $\tau^{< b}
 x \in \cat{C}^{\leqslant b - 1}$ and since $\tau^{< b - 1} \tau^{< b} = \tau^{< b - 1}$, we can
 apply truncations-by-one repeatedly and conclude that for all $n$, we have $\tau^{\leqslant n} T(x)
 = T(\tau^{\leqslant n} x)$.
 
 Now suppose we have any $x$, and for any $n$ form the distinguished triangle
 \begin{equation*}
  \tau^{< n} x \to x \to \tau^{\geqslant n} \to
 \end{equation*}
 to which we apply $T$.  Since $T(\tau^{< n} x) = \tau^{< n} T(x)$, the cone of the resulting
 triangle
 \begin{equation*}
  \tau^{< n} T(x) \to T(x) \to T(\tau^{\geqslant n} x) \to
 \end{equation*}
 must be isomorphic to $\tau^{\geqslant n} T(x)$, by uniqueness of cones and the truncation triangle
 for $T(x)$.  Thus, $\tau^{\geqslant n} T(x) = T(\tau^{\geqslant n} x)$.  Since then $T$ commutes
 with all trunctions, it is \emph{a fortiori} t-exact.
\end{proof}

Take $T = R\psiun_f[-1]$; by \ref{nearby cycles are perverse}, it satisfies the hypothesis of
\ref{truncation}, and therefore we conclude:

\begin{theorem}{thm}{nearby cycles are exact}
 The functor $R\psi_f[-1]$ on the bounded derived category $\cat{D}^b(X)$ is t-exact for
 the perverse t-structure.  Likewise, $R\phi_f[-1]$ is t-exact.
\end{theorem}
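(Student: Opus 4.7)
The strategy is to reduce both claims to \ref{truncation}: it will suffice to verify, for each of $T = R\psi_f[-1]$ and $T = R\phi_f[-1]$, that $T$ is right t-exact and carries the perverse core to the perverse core.

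For $R\psi_f[-1]$, I appeal to the Jordan decomposition \ref{eq:nearby cycles decomposition} together with the subsequent identification $R\psi_f^\lambda(\sh{M}) \cong R\psiun_f(\sh{M} \tensor f^* \sh{L}_\lambda^{-1}) \tensor L_\lambda$. Tensoring with the locally free local system $f^* \sh{L}_\lambda^{-1}$ preserves both $\perv\cat{D}^{\leqslant 0}(U)$ and $\cat{M}(U)$ (and similarly for $\tensor L_\lambda$ on $Z$), while $R\psiun_f[-1]$ satisfies both hypotheses by \ref{nearby cycles are perverse}; thus \ref{truncation} yields t-exactness of each summand $R\psi_f^\lambda[-1]$. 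Since constructibility forces only finitely many $\lambda$ to act nontrivially on a given complex, t-exactness extends to the full direct sum $R\psi_f[-1]$.

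For $R\phi_f[-1]$, the two hypotheses require separate arguments. Right t-exactness follows by shifting the defining triangle $i^*\sh{F} \to R\psi_f(j^*\sh{F}) \to R\phi_f(\sh{F}) \to$ by $[-1]$: when $\sh{F} \in \perv\cat{D}(X)^{\leqslant 0}$, the first two terms lie respectively in $\perv\cat{D}(Z)^{\leqslant 1}$ and $\perv\cat{D}(Z)^{\leqslant 0}$ (using right t-exactness of $i^*$, t-exactness of $j^*$, and the first part), so the long exact sequence of perverse cohomology forces $R\phi_f(\sh{F})[-1] \in \perv\cat{D}(Z)^{\leqslant 0}$. For core preservation, I decompose $R\phi_f = \bigoplus_\lambda R\phi_f^\lambda$ analogously: for $\lambda \neq 1$ the $\lambda$-component of the arrow $i^*\sh{F} \to R\psi_f(j^*\sh{F})$ vanishes (any morphism from a monodromy-trivial object to a generalized $\lambda$-eigenspace with $\lambda \neq 1$ is zero, by the argument in \ref{unipotent nearby cycles}), so $R\phi_f^\lambda \cong R\psi_f^\lambda$ is perversity-preserving after the shift by the first part; and for $\lambda = 1$, \ref{vanishing cycles and phi} identifies $R\phiun_f[-1]$ with $\Phiun_f$, which is perverse by \ref{vanishing cycles}.

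The main obstacle I anticipate is justifying the splitting of the defining triangle along Jordan blocks cleanly enough to obtain $R\phi_f^\lambda \cong R\psi_f^\lambda$ for $\lambda \neq 1$ as functors, rather than merely up to isomorphism on individual objects; this should follow from the functoriality of the Jordan decomposition of $R\psi_f$, but it is the point most easily taken for granted in the argument.
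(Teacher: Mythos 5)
Your proof is correct and follows essentially the same strategy as the paper's: reduce to \ref{truncation} by separately verifying right t-exactness and core preservation, using the triangle $i^*\sh{F} \to R\psi_f(j^*\sh{F}) \to R\phi_f(\sh{F}) \to$ for the vanishing cycles case. Where you differ from the paper is in explicitness, and this is to your advantage. The paper's prelude applies \ref{truncation} only to $T = R\psiun_f[-1]$ and then states the theorem for the full $R\psi_f[-1]$, eliding the passage from the unipotent summand to the whole functor; you spell this out by applying \ref{truncation} to each $R\psi_f^\lambda[-1]$ individually (using $R\psi_f^\lambda(\sh{M}) \cong R\psiun_f(\sh{M} \tensor f^*\sh{L}_\lambda^{-1}) \tensor L_\lambda$ and the fact that rank-one twists preserve the perverse t-structure) and then summing. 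Similarly, the paper asserts core preservation for $R\phi_f[-1]$ ``already follows from \ref{vanishing cycles and phi},'' which strictly covers only $R\phiun_f[-1]$; your decomposition $R\phi_f^\lambda \cong R\psi_f^\lambda$ for $\lambda \neq 1$ (via vanishing of the relevant component of $i^*\sh{F} \to R\psi_f(j^*\sh{F})$) supplies the missing non-unipotent case. The concern you flag at the end --- that the isomorphism $R\phi_f^\lambda \cong R\psi_f^\lambda$ should be functorial --- is legitimate but not a gap: the functorial Jordan decomposition of $R\psi_f$ established in \ref{nearby cycles decomposition}, together with the functoriality of the triangle and the fact that the monodromy on $i^*\sh{F}$ is trivial, carries over to $R\phi_f$ by the same projector argument used in \ref{unipotent nearby cycles}.
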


\begin{proof}
 For the second statement, we must show that $R\phi_f[-1]$ is right t-exact and preserves perverse
 sheaves; the latter claim already follows from \ref{vanishing cycles and phi}.  For the former,
 we apply the long exact sequence to the triangle
 \begin{equation*}
  i^* \sh{F} \to R\psi_f(j^* \sh{F}) \to R\phi_f(\sh{F}) \to
 \end{equation*}
 We have $i^* \sh{F} \in \perv \cat{D}(X)^{[-1,0]}$ because of triangle \ref{eq:extension
 triangle}, and we already know that $R\psi_f[-1]$ is right t-exact, so the long exact sequence of
 perverse cohomology shows that $\perv H^i(R\phi_f \sh{F}) = 0$ when $i \geq 0$, as desired.
\end{proof}

We will not prove here that $R\psi_f[-1]$ commutes with Verdier duality.  This is significantly more
difficult since it necessitates enlarging the domain of a certain natural transformation (the map
$\gamma^{a,b;r}$ constructed in \ref{kernel equals cokernel}) from the core of the perverse
t-structure to the entire derived category. This involves the interaction with both objects and
morphisms:
\begin{itemize}
 \item The natural maps must be defined for all objects, not just those in $\cat{M}(U)$;
 \item The maps thus obtained must commute with all morphisms, not just those between objects of
 $\cat{M}(U)$.
\end{itemize}
To see why this is difficult, consider showing merely that the $\gamma^{a,b;r}$ (and their
translates) are natural with respect to maps of the form $\map{g}{\sh{M}}{\sh{N}[i]}$, with $i \in
\N$ and $\sh{M}, \sh{N} \in \cat{M}(U)$.  Note that the argument given for the naturality of
$\gamma^{a,b;r}$ is not valid in this context, since kernel and cokernel constructions in the
abelian category of perverse sheaves are not functorial in the entire derived category.

If $i = 1$, this is easy; we necessarily have $\on{Cone}(g) \in \cat{M}(U)[1]$, so
rotating the distinguished triangle gives a short exact sequence
\begin{equation*}
 0 \to \sh{N} \to \on{Cone}(g)[-1] \to \sh{M} \to 0.
\end{equation*}
Conversely, this sequence constructs the distinguished triangle $\sh{M} \to \sh{N}[1] \to
\on{Cone}(g)$ by the reverse procedure.  Then, applying $\DD \Psiun_f$ and $\Psiun_f \DD$ to the
sequence, we find by naturality of $\gamma^{a,b;r}$ that there is a commutative diagram of short
exact sequences, which implies that $\gamma^{a,b;r}$ is natural with respect to $g$.

The analogue of this argument for $i > 1$ would involve finding a sequence of the form
\begin{equation*}
 0 \xrightarrow{h_{i + 1} = 0} (\sh{N} = \sh{A}^{-(i + 1)})
   \xrightarrow{h_i} \sh{A}^{-i} \to \dots \to \sh{A}^{-1}
   \xrightarrow{h_0} (\sh{A}^0 = \sh{M})
   \to 0
\end{equation*}
representing $g$.  The manner in which such a sequence does represent such a map is clear; we get a
collection of short exact sequences representing maps:
\begin{align*}
 0 \to \on{coker}(h_{j + 1}) \to \sh{A}^{-j} \to \on{coker}(h_j) \to 0, &&
 \map{g_j}{\on{coker}(h_j)}{\on{coker}(h_{j + 1})[1]}
\end{align*}
(where $\on{coker}(h_{i + 1}) = \sh{N}$ and $\on{coker}(h_1) = \sh{M}$), and thus, by composition, a
map $\map{g}{\sh{M}}{\sh{N}[i]}$, as desired.  This is Yoneda's realization of $\on{Ext}^i(\sh{M},
\sh{N})$; it holds in the derived category of $\cat{M}(U)$.  It is, however, a nontrivial theorem,
proved in \cite{B}, that this is the same as $\cat{D}(U)$, and in fact it is describing the
morphisms that occupies the entirety of the work in that paper.  Of course, once we choose to cite
this result, it is a trivial consequence of \ref{nearby cycles computations} that $R\psi_f[-1]$
commutes with $\DD$, since it is then the derived functor of a self-dual \emph{exact} functor on
$\cat{M}(U)$.  Thus, we do not expect that there will be as elementary an argument as for the
perversity of nearby cycles.

In the recent preprint \cite{duality}, autoduality of the nearby cycles functor is proven in
complete generality in the complex analytic setting, and references are given there for prior
results and those in the algebraic setting.

\subsection*{The maximal extension functor\texorpdfstring{ $\Xiun_f$}{}}

We have used the term ``maximal extension functor'' without explanation (as did Beilinson), but
\ref{xi sequences} provides sufficient rationale: applying $i^*$ to the first one and $i^!$ to the
second one, the long exact sequence of perverse cohomology shows that $i^* \Xiun_f(\sh{M}) \cong
\Psiun_f(\sh{M}) \cong i^! \Xiun_f(\sh{M})$ are both perverse sheaves, which is as far out
(cohomologically) as they can be given that $i^*$ is right t-exact and $i^!$ is left t-exact.  This
should be compared with the defining property of the ``minimal extension'' $j_{!*}(\sh{M})$, that
$i^* j_{!*}(\sh{M})[-1]$ and $i^! j_{!*}(\sh{M})[1]$ are perverse, so that it has a minimal presence
on $X$ given that it extends $\sh{M}$.  The condition that $i^* \Xiun_f(\sh{M})$ and $i^!
\Xiun_f(\sh{M})$ are perverse does not uniquely characterize $\Xiun_f(\sh{M})$, as one could add any
perverse sheaf supported on $Z$ without changing it, but imposing \ref*{xi sequences} forbids such a
modification.  As we will see below, these sequences uniquely determine $\Xiun_f(\sh{M})$.

To do so, consider the pair of upper and lower ``caps'' of an octahedron:
\begin{align*}
 \underset{\text{\large(upper cap)}}{\xymatrix@C-1ex@R+2ex{
   {j_! \sh{M}} & & {j_* \sh{M}} \\
   & {\Xiun_f(\sh{M})} & \\
   {\Psiun_f(\sh{M})} & & {\Psiun_f(\sh{M})[1]}
   \ar"1,1";"1,3"^{\alpha}
   \ar"1,1";"2,2"_{\alpha_-}
   \ar"2,2";"1,3"_{\alpha_+}
   \ar"2,2";"3,1"_{\beta_-}
   \ar"3,1";"1,1"^{[1]}
   \ar"1,3";"3,3"
   \ar"3,3";"2,2"_{\beta_+[1]}
   \ar"3,3";"3,1"^{(1 - t)[1]}
   \ar@{}"2,2";"1,2"|(0.67){c}
   \ar@{}"2,2";"2,3"|(0.67){d}
   \ar@{}"2,2";"3,2"|(0.67){c}
   \ar@{}"2,2";"2,1"|(0.67){d}
  }
 }
 &&
 \underset{\text{\large(lower cap)}}{\xymatrix@C-1ex@R+2ex{
   {j_! \sh{M}} & & {j_* \sh{M}} \\
   & {i^* j_* \sh{M}\vphantom{\Xiun_f(\sh{M})}} & \\
   {\Psiun_f(\sh{M})} & & {\Psiun_f(\sh{M})[1]}
   \ar"1,1";"1,3"^{\alpha}
   \ar"1,3";"2,2"
   \ar"2,2";"1,1"^{[1]}
   \ar"3,1";"1,1"^{[1]}
   \ar"3,1";"2,2"
   \ar"1,3";"3,3"
   \ar"2,2";"3,3"
   \ar"3,3";"3,1"^{(1 - t)[1]}
   \ar@{}"2,2";"1,2"|(0.67){d}
   \ar@{}"2,2";"2,3"|(0.67){c}
   \ar@{}"2,2";"3,2"|(0.67){d}
   \ar@{}"2,2";"2,1"|(0.67){c}
  }
 }
\end{align*}
The triangles marked ``c'' are commutative and those marked ``d'' are distinguished; the arrows
marked $[1]$ have their targets (but not their sources) shifted by $1$.  The octahedral axiom states
that given any diagram of commutative and distinguished triangles as in (lower cap) we can
construct a diagram as in (upper cap) and vice versa (\cite{BBD}*{\S1.1.6}).  Using these diagrams,
we can derive \ref{eq:psiun} and \ref{xi sequences} from each other.  This idea is also present in
\cite{survey}*{\S5.7.2}.

\begin{theorem}{prop}{xi implies psiun}
 Suppose we have functors $\Xiun_f$ and $\Psiun_f$ from $\cat{M}(U)$ to $\cat{M}(X)$, where
 $\Psiun_f$ has a unipotent action of $\pi_1(\Gm)$, and satisfying \ref{xi sequences}.  Then
 \ref{eq:psiun} holds with $R\psiun_f = \Psiun_f[1]$.
\end{theorem}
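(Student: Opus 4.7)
The plan is to derive \ref{eq:psiun} by applying the octahedral axiom to the natural composition $j_! \sh{M} \to \Xiun_f(\sh{M}) \to j_* \sh{M}$ appearing in the hypotheses, together with the standard extension triangle for $j_* \sh{M}$; diagrammatically, this is exactly the passage from the upper to the lower cap of the octahedron drawn just above the statement. First I would repackage the two short exact sequences of \ref{xi sequences} as distinguished triangles
\begin{gather*}
 (T_-):\ j_! \sh{M} \xrightarrow{\alpha_-} \Xiun_f(\sh{M}) \xrightarrow{\beta_-}
        \Psiun_f(\sh{M}) \to j_! \sh{M}[1], \\
 (T_+):\ \Psiun_f(\sh{M}) \xrightarrow{\beta_+} \Xiun_f(\sh{M}) \xrightarrow{\alpha_+}
        j_* \sh{M} \to \Psiun_f(\sh{M})[1],
\end{gather*}
and invoke the hypothesis $\alpha_+ \circ \alpha_- = \alpha$ to identify the composite $j_! \sh{M} \to \Xiun_f(\sh{M}) \to j_* \sh{M}$ with the canonical morphism $\alpha$. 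By the extension triangle \ref{eq:extension triangle} applied to $j_* \sh{M}$ (using $j^* j_* \sh{M} = \sh{M}$), the cone of $\alpha$ is $i^* j_* \sh{M}$.

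Next I would apply the octahedral axiom to the composable pair $j_! \sh{M} \xrightarrow{\alpha_-} \Xiun_f(\sh{M}) \xrightarrow{\alpha_+} j_* \sh{M}$. The three pertinent cones are $\Psiun_f(\sh{M})$ (from $T_-$), $\Psiun_f(\sh{M})[1]$ (from $T_+$), and $i^* j_* \sh{M}$ (from the extension triangle), and the octahedron produces a fourth distinguished triangle of the form
\begin{equation*}
 \Psiun_f(\sh{M}) \to i^* j_* \sh{M} \to \Psiun_f(\sh{M})[1] \xrightarrow{\partial}
 \Psiun_f(\sh{M})[1].
\end{equation*}
The crux of the argument is the identification of the boundary $\partial$: the octahedral axiom describes it as the composition of the rotated connecting morphism of $(T_+)$, namely $\beta_+[1] \colon \Psiun_f(\sh{M})[1] \to \Xiun_f(\sh{M})[1]$, with the shift of the middle morphism of $(T_-)$, namely $\beta_-[1] \colon \Xiun_f(\sh{M})[1] \to \Psiun_f(\sh{M})[1]$. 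By the identity $\beta_- \circ \beta_+ = 1 - t$ from \ref{xi sequences}, this composition equals $(1-t)[1]$.

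Rotating this triangle once gives
\begin{equation*}
 i^* j_* \sh{M} \to \Psiun_f(\sh{M})[1] \xrightarrow{1 - t} \Psiun_f(\sh{M})[1] \to
 i^* j_* \sh{M}[1],
\end{equation*}
which is \ref{eq:psiun} upon setting $R\psiun_f = \Psiun_f[1]$. The only real obstacle is the sign-and-shift bookkeeping needed to verify that the octahedral boundary is genuinely $\beta_- \circ \beta_+$ shifted rather than some other composite of the arrows of $(T_\pm)$; once this is pinned down from the standard formulation of the axiom (e.g.\ \cite{BBD}*{\S1.1.6}), the conclusion is immediate by substitution.
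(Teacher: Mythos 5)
Your proof is correct and takes essentially the same approach as the paper: both convert the two short exact sequences of \ref{xi sequences} into distinguished triangles, use $\alpha_+ \circ \alpha_- = \alpha$ and the extension triangle \ref{eq:extension triangle} to identify the cone of the composite as $i^* j_* \sh{M}$, invoke the octahedral axiom, and identify the remaining boundary map as $(1-t)[1]$ via $\beta_- \circ \beta_+ = 1-t$. The paper compresses this by pointing to the already-drawn ``upper cap''/``lower cap'' octahedron diagrams, while you unwind the axiom explicitly, but the argument is the same.
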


\begin{proof}
 Given \ref*{xi sequences}, each exact sequence there corresponds to a unique distinguished triangle
 in $\cat{D}(X)$ with the same entries; these triangles appear in (upper cap), where the top and
 bottom maps are $\alpha$ and $(1 - t)[1]$ since the triangles containing them are commutative.  The
 octahedral axiom gives us (lower cap), and since the upper triangle is distinguished its cone (the
 middle term) must necessarily be $i^* j_* \sh{M}$ by \ref{eq:extension triangle}.  Therefore the
 bottom triangle is \ref*{eq:psiun}, as desired.  Note that all the interior maps in (lower cap)
 are uniquely determined, since they correspond to the kernels and cokernels of the maps $\alpha$
 and $1 - t$ of perverse sheaves in the long exact sequence of cohomology.
\end{proof}
 
\begin{theorem}{prop}{psiun implies xi}
 Given only the triangle \ref{eq:psiun}, both the functor $\Xiun_f$ and its extension classes in
 $\on{Ext}^1(\Psiun_f(\sh{M}), j_! \sh{M})$ and $\on{Ext}^1(j_* \sh{M}, \Psiun_f(\sh{M}))$ can
 be constructed with \ref{xi sequences} satisfied (except for the duality statement).  In
 particular, by \ref{xi implies psiun}, $\Xiun_f$ is uniquely determined by \ref*{xi sequences}.
\end{theorem}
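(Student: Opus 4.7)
The plan is to reverse the octahedral argument used in \ref{xi implies psiun}. Given only \ref{eq:psiun}, which in its rotated form reads $\Psiun_f(\sh{M}) \to i^* j_* \sh{M} \to \Psiun_f(\sh{M})[1] \to$ with connecting morphism $1 - t$, I would combine it with the extension triangle $j_! \sh{M} \xrightarrow{\alpha} j_* \sh{M} \to i^* j_* \sh{M} \to$. These two distinguished triangles share the vertex $i^* j_* \sh{M}$ and make up the ``lower cap'' of an octahedron displayed in the excerpt. Applying the octahedral axiom produces an ``upper cap'' with a new vertex, which I define to be $\Xiun_f(\sh{M})$, together with the two distinguished triangles
\begin{align*}
 j_! \sh{M} \xrightarrow{\alpha_-} \Xiun_f(\sh{M}) \xrightarrow{\beta_-} \Psiun_f(\sh{M}) \to j_! \sh{M}[1], \\
 \Psiun_f(\sh{M}) \xrightarrow{\beta_+} \Xiun_f(\sh{M}) \xrightarrow{\alpha_+} j_* \sh{M} \to \Psiun_f(\sh{M})[1]
\end{align*}
and the commutativities $\alpha_+ \circ \alpha_- = \alpha$, $\beta_- \circ \beta_+ = 1 - t$ coming for free from the octahedron.

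Next I would promote these triangles to the short exact sequences of \ref{xi sequences}. This requires showing that $\Xiun_f(\sh{M})$ is itself a perverse sheaf. Applying the long exact sequence of perverse cohomology to either of the two triangles above, and using that $j_! \sh{M}$, $j_* \sh{M}$ and $\Psiun_f(\sh{M})$ are all perverse (the last by \ref{nearby cycles are perverse}), one sees that $\perv H^i(\Xiun_f(\sh{M})) = 0$ for $i \neq 0$. With $\Xiun_f(\sh{M})$ perverse, the two distinguished triangles descend to short exact sequences in $\cat{M}(X)$, and these are precisely the desired extension classes in $\on{Ext}^1(\Psiun_f(\sh{M}), j_! \sh{M})$ and $\on{Ext}^1(j_* \sh{M}, \Psiun_f(\sh{M}))$.

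Functoriality and uniqueness are addressed together. Although the octahedral axiom does not give a canonical choice of $\Xiun_f(\sh{M})$, once perversity is established the two short exact sequences characterize $\Xiun_f(\sh{M})$ up to unique isomorphism as an extension with prescribed classes; naturality of \ref{eq:psiun} and of the extension triangle in $\sh{M}$ then transfers to naturality of $\Xiun_f$. For the uniqueness claim, suppose another functor $\Xiun_f'$ satisfies \ref{xi sequences}; by \ref{xi implies psiun} it produces a triangle of the form \ref{eq:psiun} via the octahedral construction. Since that construction is invertible on the level of octahedra (the upper and lower caps determine each other up to isomorphism of the middle vertex, which is forced by perversity), the extension classes and hence $\Xiun_f' \cong \Xiun_f$.

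The main obstacle is the well-known non-functoriality of the octahedral axiom: the cone of a composition is not a functor, so one cannot directly deduce that $\sh{M} \mapsto \Xiun_f(\sh{M})$ is a functor. The workaround is to lean on perversity: once we know $\Xiun_f(\sh{M})$ lies in the abelian category $\cat{M}(X)$ and is pinned down by a specific extension class in $\on{Ext}^1$, functoriality follows automatically from the functoriality of Yoneda's $\on{Ext}^1$ in each argument. The secondary delicate point is checking that the relation $\beta_- \circ \beta_+ = 1 - t$ really drops out of the octahedron: this is read off by following the composite $\Psiun_f(\sh{M}) \xrightarrow{\beta_+} \Xiun_f(\sh{M}) \xrightarrow{\beta_-} \Psiun_f(\sh{M})$ around the appropriate commutative face and comparing to the connecting map in the rotated \ref{eq:psiun}, which is $1 - t$ by construction.
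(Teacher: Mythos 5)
Your approach is essentially the paper's: form the lower cap from \ref{eq:psiun} and the extension triangle \ref{eq:extension triangle}, invoke the octahedral axiom to produce the upper cap with new vertex $\Xiun_f(\sh{M})$, establish perversity of $\Xiun_f(\sh{M})$ via the long exact sequence of perverse cohomology (using \ref{nearby cycles are perverse}), descend the distinguished triangles to the short exact sequences, and read off $\alpha_+ \alpha_- = \alpha$ and $\beta_- \beta_+ = 1 - t$ from commutativity of the faces.

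The one place where you go beyond the paper is the claim that ``once $\Xiun_f(\sh{M})$ is pinned down by a specific extension class in $\on{Ext}^1$, functoriality follows automatically from the functoriality of Yoneda's $\on{Ext}^1$.'' This overstates what an Ext class gives you: a class $e \in \on{Ext}^1(B, A)$ determines the middle term of a short exact sequence $0 \to A \to E \to B \to 0$ only up to \emph{non-unique} isomorphism, since the automorphisms of $E$ fixing $A$ and $B$ form a torsor under $\on{Hom}(B, A)$, which is typically nonzero here. Hence naturality of $\on{Ext}^1$ alone does not produce a canonical $\Xiun_f(\sh{M}) \to \Xiun_f(\sh{M}')$ for a map $\sh{M} \to \sh{M}'$. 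The paper avoids this pitfall by making the more modest assertion that the triangles in (upper cap) are ``uniquely determined up to isomorphism fixing $j_{*,!}\sh{M}$ and $\Psiun_f(\sh{M})$'' --- i.e.\ uniqueness of the object, not functoriality --- and relies on the independent construction of $\Xiun_f$ as the exact functor $\Pi^1_f$ from \ref{sec:unipotent cycles} for the latter. Your instinct that the non-functoriality of cones is the delicate point is exactly right; it just is not fully resolved by the Ext argument as stated.
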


\begin{proof}
 Given \ref*{eq:psiun}, since we have \ref{eq:extension triangle} canonically we can form all
 the vertices of (lower cap) and both distinguished triangles; the left and right maps are
 determined by the requirement that the triangles containing them be commutative.  The octahedral
 axiom gives us (upper cap) and $\Xiun_f(\sh{M})$, identified at first only as an element of
 $\cat{D}(X)$.  From \ref{nearby cycles are perverse} we know that $\Psiun_f(\sh{M})$ is perverse;
 then the long exact sequence of perverse cohomology associated to either distinguished triangle in
 (upper cap) shows that, in fact, $\Xiun_f(\sh{M})$ is perverse, and thus those triangles correspond
 to exact sequences as in \ref*{xi sequences}.  The equations $\alpha_+ \alpha_- = \alpha$ and
 $\beta_- \beta_+ = 1 - t$ can then be read off from the commutativity of the upper and lower
 triangles.  Since the vertical arrows come from (lower cap), these distinguished triangles are
 uniquely determined up to isomorphism fixing $j_{*,!}\sh{M}$ and $\Psiun_f(\sh{M})$, as desired.
\end{proof}

The identity of $\Xiun_f$ is somewhat mysterious, but can be made precise using the gluing
category.  These computations are also given in \cite{survey}*{Example 5.7.8}.

\begin{theorem}{prop}{glued functors}
 For any perverse sheaf $\sh{M} \in \cat{M}(U)$, we have the following correspondences via the
 gluing construction:
 \begin{align*}
  j_!(\sh{M})
   &= (\sh{M}, \Psiun_f(\sh{M}), \id, 1 - t)
  &
  j_{!*}(\sh{M})
   &= (\sh{M}, \on{im}(1 - t), 1 - t, \on{incl})
  \\
  j_*(\sh{M})
   &= (\sh{M}, \Psiun_f(\sh{M}), 1 - t, \id)
  &
  \Xiun_f(\sh{M})
   &= (\sh{M}, \Psiun_f(\sh{M} \tensor f^* \sh{L}^2), u, v);
 \end{align*}
 where $\map{\alpha}{j_!(\sh{M})}{j_*(\sh{M})}$ is the map $(\id, 1 - t)$ in the gluing category; in
 $j_{!*}(\sh{M})$, we mean $\on{im}(1 - t) \subset \Psiun_f(\sh{M})$; in
 $\Xiun_f(\sh{M})$, taking $\Psiun_f(\sh{M} \tensor f^* \sh{L}^a) = \Psiun_f(\sh{M}) \oplus
 \Psiun_f(\sh{M})$, we have $u = (\id, 1 - t)$ and $v = \on{pr}_2$.
\end{theorem}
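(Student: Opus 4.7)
The plan is to apply $F_f$ directly to each of the four perverse sheaves, exploiting the equivalence of categories from \ref{gluing} to shortcut one case via another. In every case, $j^*$ of the sheaf is $\sh{M}$, so the content is the computation of $\Phiun_f$ and the maps $u,v$, which are the cohomology and projections of the complex \ref{eq:phi complex}.

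For $\sh{F} = j_! \sh{M}$, the adjunction maps are $\gamma_- = \id$ and $\gamma_+ = \alpha$, so \ref{eq:phi complex} becomes $j_! \sh{M} \xrightarrow{(\alpha_-, \id)} \Xiun_f(\sh{M}) \oplus j_! \sh{M} \xrightarrow{(\alpha_+, -\alpha)} j_* \sh{M}$; a brief chase using the two short exact sequences of \ref{xi sequences} identifies the cohomology with $\on{ker}(\alpha_+) = \Psiun_f(\sh{M})$, and from $u = (\beta_+, 0)$, $v = \beta_- \circ \on{pr}$ together with $\beta_- \beta_+ = 1-t$, one reads off $u = \id$, $v = 1-t$. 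For $\sh{F} = j_* \sh{M}$ the same kind of direct computation applies (with $\gamma_- = \alpha$, $\gamma_+ = \id$), but it is slicker to deduce it from the $j_!$ case by duality: $\DD j_! \sh{M} = j_* \DD \sh{M}$, and by \ref{nearby cycles computations} the gluing construction is compatible with $\DD$, which interchanges $u$ and $v$.

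For $j_{!*} \sh{M}$, I would avoid a direct computation and instead observe that $j_{!*} \sh{M}$ is the image of $\alpha : j_! \sh{M} \to j_* \sh{M}$ in $\cat{M}(X)$. Since $F_f$ is an equivalence (by \ref{gluing}) and kernels/cokernels in $\cat{M}_f(U,Z)$ are coordinatewise, it suffices to identify the morphism $F_f(\alpha)$: its $U$-component is $\id$, and its $Z$-component must be the unique map $\Psiun_f(\sh{M}) \to \Psiun_f(\sh{M})$ intertwining the gluing data of $j_!$ and $j_*$, which forces it to equal $1-t$. Taking image coordinatewise then gives $(\sh{M}, \on{im}(1-t))$, with $u$ and $v$ induced as the surjection $1-t$ and the inclusion, respectively.

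Finally, for $\Xiun_f(\sh{M})$, the two exact sequences of \ref{xi sequences} give $j^* \Xiun_f(\sh{M}) = \sh{M}$ and identify $\gamma_\pm = \alpha_\pm$, so \ref{eq:phi complex} takes the form $j_! \sh{M} \xrightarrow{(\alpha_-, \alpha_-)} \Xiun_f(\sh{M})^{\oplus 2} \xrightarrow{(\alpha_+, -\alpha_+)} j_* \sh{M}$. A direct kernel/image computation using \ref{xi sequences}, parameterizing the kernel of the right map via the isomorphism $\on{ker}(\alpha_+) \cong \Psiun_f(\sh{M})$ supplied by $\beta_+$, yields $\Phiun_f(\Xiun_f(\sh{M})) \cong \Psiun_f(\sh{M})^{\oplus 2}$, which is $\Psiun_f(\sh{M} \tensor f^* \sh{L}^2)$ by the tensor-product formula for $R\psiun_f$. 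Unwinding $u = (\beta_+, 0)$ and $v = \beta_- \circ \on{pr}$ through this identification, using $\beta_-\beta_+ = 1-t$, produces a diagonal embedding and a coordinate projection, which after the conventional choice of splitting of $\Psiun_f(\sh{M} \tensor f^* \sh{L}^2)$ become $u = (\id, 1-t)$ and $v = \on{pr}_2$. The main obstacle is bookkeeping: specifically, matching signs and the ordering of summands in the isomorphism $\Psiun_f(\sh{M} \tensor f^* \sh{L}^2) \cong \Psiun_f(\sh{M})^{\oplus 2}$ used in \ref{nearby cycles construction} to the natural basis arising from the kernel computation, so that the relation $v \circ u = 1-t$ is realized in the expected form.
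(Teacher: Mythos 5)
Your proposal is essentially correct, but it travels a noticeably different road from the paper's at two of the four cases. For $j_!(\sh{M})$, you compute the cohomology of \ref{eq:phi complex} directly; the paper instead invokes the distinguished triangle from \ref{vanishing cycles and phi} and the vanishing $i^*j_! = 0$ to conclude $\Phiun_f(j_!\sh{M}) \cong \Psiun_f(\sh{M})$ immediately and then reads off $u = \id$ and $v = 1 - t$ from $v \circ u = 1 - t$. Your chase gets the same answer, at the cost of parameterizing the kernel of $(\alpha_+,-\alpha)$ by hand, but it is self-contained and does not presuppose the later result \ref{vanishing cycles and phi}, which in the paper's logical ordering comes before this proposition in the ``Comments'' section, so both are valid. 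For $\Xiun_f(\sh{M})$ the divergence is sharper: you compute $\Phiun_f(\Xiun_f(\sh{M}))$ and the maps $u,v$ directly, identifying the cohomology with $\Psiun_f(\sh{M})^{\oplus 2}$ and then sorting out the coordinate conventions; the paper instead \emph{posits} the quadruple $(\sh{M}, \Psiun_f(\sh{M}\tensor f^*\sh{L}^2), u, v)$, exhibits maps $\alpha_\pm, \beta_\pm$ of quadruples realizing both short exact sequences of \ref{xi sequences}, and then appeals to the uniqueness part of \ref{psiun implies xi} to conclude that this object \emph{is} $\Xiun_f(\sh{M})$. The paper's route buys the conclusion without ever pinning down a splitting of $\Phiun_f(\Xiun_f(\sh{M}))$ or matching sign conventions; your route is more concrete but, as you observe, leaves nontrivial bookkeeping to be reconciled---in particular the isomorphism you obtain most naturally yields $u = (1-t,\id)$, $v = \on{pr}_1$ up to a coordinate swap, and you must argue that this matches the paper's stated form after the conventional choice of ordering in $\Psiun_f(\sh{M}\tensor f^*\sh{L}^2) \cong \Psiun_f(\sh{M})^{\oplus 2}$. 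Your treatments of $j_*$ (by duality) and $j_{!*}$ (as the image of $F_f(\alpha)$ taken coordinatewise, with the forced identification of the $Z$-component of $F_f(\alpha)$ as $1-t$) agree with the paper's.
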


\begin{proof}
 Using the triangle of \ref{vanishing cycles and phi}, we have
 \begin{equation*}
  \Psiun_f(j^* j_! \sh{M}) \to \Phiun_f(j_! \sh{M}) \to i^* j_!(\sh{M}) \to
 \end{equation*}
 and since $i^* j_! = 0$, we get an isomorphism $\Phiun_f(j_! \sh{M}) \cong \Psiun_f(\sh{M})$;
 dualizing, we have $\Phiun_f(j_* \sh{M}) \cong \Psiun_f(\sh{M})$ also.  Since $u$ is the first map
 in this triangle, under this identification we have $u = \id$, and therefore $v = 1 - t$ since $v
 \circ u = 1 - t$.  This gives the quadruple for $j_!(\sh{M})$; for $j_*(\sh{M})$, we dualize,
 since $u$ and $v$ are dual by their definition in Propositions \plainref{vanishing cycles} and
 \plainref{xi sequences}. That the natural map is given by $(\id, 1 - t)$ follows from the fact that
 this does define a map $j_!(\sh{M}) \to j_*(\sh{M})$ in the gluing category, and that its
 restriction to $U$ is the identity.

 For $j_{!*}(\sh{M})$, we use the fact that it is the image of the natural map
 $\map{\alpha}{j_!(\sh{M})}{j_*(\sh{M})}$; having already identified all the parties, this is clear
 from the quadruples just obtained.

 For the identification of $\Xiun_f(\sh{M})$, obviously, $v \circ u = 1 - t$; more importantly, $u$
 is injective and $v$ surjective. Then the pair of exact sequences in \ref{xi sequences} can be
 described on quadruples as being trivial over $U$, and over $Z$ the maps $\alpha_-$ and $\alpha_+$
 are described by the following maps of quadruples:
 \begin{equation*}
  \xymatrix{
   {j_!(\sh{M}):} & 
    {\Psiun_f(\sh{M})} & {\Psiun_f(\sh{M})} & {\Psiun_f(\sh{M})} \\
   {\Xiun_f(\sh{M}):} &
    {\Psiun_f(\sh{M})} & {\Psiun_f(\sh{M} \tensor f^*\sh{L}^2)} & {\Psiun_f(\sh{M})} \\
   {j_*(\sh{M}):} &
    {\Psiun_f(\sh{M})} & {\Psiun_f(\sh{M})} & {\Psiun_f(\sh{M})}
   \ar"1,1";"2,1"_{\alpha_-} \ar"2,1";"3,1"_{\alpha_+}
   \ar"1,2";"1,3"^-{\id} \ar"1,3";"1,4"^-{1 - t}
   \ar"2,2";"2,3"^-{u} \ar"2,3";"2,4"^-{v}
   \ar"3,2";"3,3"^-{1 - t} \ar"3,3";"3,4"^-{\id}
   \ar"1,2";"2,2"^{\id} \ar"2,2";"3,2"^{\id}
   \ar"1,3";"2,3"^{u} \ar"2,3";"3,3"^{v}
   \ar"1,4";"2,4"^{\id} \ar"2,4";"3,4"^{\id}
  }
 \end{equation*}
 We take $\beta_-$ and $\beta_+$ to be the maps whose $Z$-parts (the $U$-parts are zero) are:
 \begin{align*}
  \beta_-(y,z) = (1 - t)y - z
  &&
  \beta_+(x) = (x, 0).
 \end{align*}
 Then it is clear from the definitions of $u$ and $v$ that we obtain the sequences of \ref{xi
 sequences}; by the uniqueness part of \ref{psiun implies xi}, this uniquely determines
 $\Xiun_f(\sh{M})$, completing the proof.
\end{proof}

Since the entirety of \ref{sec:gluing} follows only from \ref*{xi sequences}, Propositions
\plainref*{xi implies psiun} and \plainref*{psiun implies xi} show that the constructions of
\ref{sec:unipotent cycles} are irrelevant for constructing the gluing functor.  Their purpose, as is
evident from the order we have chosen for the theorems, is to exhibit the autoduality of $\Psiun_f$
and $\Xiun_f$ (and, thus, $\Phiun_f$).  However, Beilinson's development has an aesthetic virtue
(over just using the above short proof of \ref*{xi sequences}): once \ref{nearby cycles are
perverse} is proven, the entire theory takes place within the abelian category of perverse sheaves. 
In addition, \ref{nearby cycles computations} is an ingeniously elementary, insightful, and more
useful definition of a functor whose actual definition is quite obscure.

\newpage
\section*{References}
\begin{biblist}

\bib{this}{article}{
   author={Beilinson, Alexander},
   title={How to glue perverse sheaves},
   conference={
      title={$K$-theory, arithmetic and geometry},
      address={Moscow},
      date={1984--1986},
   },
   book={
      series={Lecture Notes in Math.},
      volume={1289},
      publisher={Springer},
      place={Berlin},
   },
   date={1987},
   pages={42--51},
   review={\MR{923134 (89b:14028)}},
}

\bib{B}{article}{
   author={Beilinson, Alexander},
   title={On the derived category of perverse sheaves},
   conference={
      title={$K$-theory, arithmetic and geometry},
      address={Moscow},
      date={1984--1986},
   },
   book={
      series={Lecture Notes in Math.},
      volume={1289},
      publisher={Springer},
      place={Berlin},
   },
   date={1987},
   pages={27--41},
   review={\MR{923133 (89b:14027)}},
}

\bib{BB}{article}{
   author={Beilinson, Alexander},
   author={Bernstein, Joseph},
   title={A proof of Jantzen conjectures},
   conference={
      title={I. M. Gelfand Seminar},
   },
   book={
      series={Adv. Soviet Math.},
      volume={16},
      publisher={Amer. Math. Soc.},
      place={Providence, RI},
   },
   date={1993},
   pages={1--50},
   review={\MR{1237825 (95a:22022)}},
}

\bib{BBD}{article}{
   author={Beilinson, Alexander},
   author={Bernstein, Joseph},
   author={Deligne, Pierre},
   title={Faisceaux pervers},
   language={French},
   conference={
      title={Analysis and topology on singular spaces, I},
      address={Luminy},
      date={1981},
   },
   book={
      series={Ast\'erisque},
      volume={100},
      publisher={Soc. Math. France},
      place={Paris},
   },
   date={1982},
   pages={5--171},
   review={\MR{751966 (86g:32015)}},
}

\bib{brylinski}{article}{
   author={Brylinski, Jean-Luc},
   title={Transformations canoniques, dualit\'e projective, th\'eorie de
   Lefschetz, transformations de Fourier et sommes trigonom\'etriques},
   language={French, with English summary},
   note={G\'eom\'etrie et analyse microlocales},
   journal={Ast\'erisque},
   number={140-141},
   date={1986},
   pages={3--134, 251},
   issn={0303-1179},
   review={\MR{864073 (88j:32013)}},
}

\bib{survey}{article}{
   author={de Cataldo, Mark Andrea A.},
   author={Migliorini, Luca},
   title={The decomposition theorem, perverse sheaves and the topology of
   algebraic maps},
   journal={Bull. Amer. Math. Soc. (N.S.)},
   volume={46},
   date={2009},
   number={4},
   pages={535--633},
   issn={0273-0979},
   review={\MR{2525735}},
   doi={10.1090/S0273-0979-09-01260-9},
}


\bib{SGA}{article}{
   author={Deligne, Pierre},
   title={Le formalisme des cycles \'evanescents},
   booktitle={SGA VII (Expos\'es XIII, XIV)},
   eprint={http://www.msri.org/publications/books/sga/sga/pdf/sga7-2.pdf}
}

\bib{MTZ}{article}{
   author={El Zein, Fouad},
   author={L\^e Dung, Trang},
   author={Migliorini, Luca},
   title={A topological construction of the weight filtration},
   note={To appear},
}

\bib{GM}{book}{
   author={Gelfand, Sergei I.},
   author={Manin, Yuri I.},
   title={Methods of homological algebra},
   series={Springer Monographs in Mathematics},
   edition={2},
   publisher={Springer-Verlag},
   place={Berlin},
   date={2003},
   pages={xx+372},
   isbn={3-540-43583-2},
   review={\MR{1950475 (2003m:18001)}},
}

\bib{eigenspaces}{article}{
   author={Laumon, G{\'e}rard},
   author={Ng{\^o}, Bao Ch{\^a}u},
   title={Le lemme fondamental pour les groupes unitaires},
   language={French, with English summary},
   journal={Ann. of Math. (2)},
   volume={168},
   date={2008},
   number={2},
   pages={477--573},
   issn={0003-486X},
   review={\MR{2434884 (2009i:22022)}},
   doi={10.4007/annals.2008.168.477},
}

\bib{sam}{thesis}{
   author={Lichtenstein, Sam},
   title={Vanishing cycles for algebraic $\sh{D}$-modules},
   date={2009},
   organization={Harvard University},
   type={Bachelor's thesis},
   eprint={http://math.harvard.edu/~gaitsgde/grad_2009/Lichtenstein(2009).pdf}
}

\bib{duality}{article}{
   author={Massey, David B.},
   title={Natural Commuting of Vanishing Cycles and the Verdier Dual},
   eprint={arXiv:0908.2799v1}
}

\bib{schurmann}{book}{
   author={Sch{\"u}rmann, J{\"o}rg},
   title={Topology of singular spaces and constructible sheaves},
   series={Instytut Matematyczny Polskiej Akademii Nauk. Monografie
   Matematyczne (New Series) [Mathematics Institute of the Polish Academy of
   Sciences. Mathematical Monographs (New Series)]},
   volume={63},
   publisher={Birkh\"auser Verlag},
   place={Basel},
   date={2003},
   pages={x+452},
   isbn={3-7643-2189-X},
   review={\MR{2031639 (2005f:32053)}},
}

\end{biblist}
\end{document}